\newtheorem{assumption}{Assumption A.}
\newcommand{\R}{\mathbb{R}}
\newcommand{\abs}[1]{\left\vert#1\right\vert}
\newcommand{\set}[1]{\left\{#1\right\}}
\newcommand{\norm}[1]{\left\Vert#1\right\Vert}
\newtheorem{remark}{Remark}
\newcommand{\Eproof}{\hfill$\square$}
\newcommand{\xb}{\mathbf{x}}
\newcommand{\yb}{\mathbf{y}}
\newcommand{\ub}{\mathbf{u}}
\newcommand{\vb}{\mathbf{v}}
\renewcommand{\sb}{\mathbf{s}}
\newcommand{\db}{\mathbf{d}}
\newcommand{\Xb}{\mathbf{X}}
\newcommand{\dom}[1]{\mathrm{dom}\left(#1\right)}
\newcommand{\trace}[1]{\mathrm{tr}\left(#1\right)}
\renewcommand{\vec}[1]{\mathrm{vec}\left(#1\right)}
\DeclareMathOperator*{\argmin}{arg\,min}
\title{An Inexact Proximal Path-Following Algorithm \\ for Constrained  Convex Minimization}
\author{Quoc Tran-Dinh$^{*}\!\!\!$\and Anastasios Kyrillidis$^{*}\!\!\!$\and Volkan Cevher
\thanks{Laboratory for Information and Inference Systems (LIONS), 
	     \'{E}cole Polytechnique F\'{e}d\'{e}rale de Lausanne (EPFL), 
              CH1015 - Lausanne, Switzerland.
\newline
E-mail: {\tt\{quoc.trandinh, anastasios.kyrillidis, volkan.cevher\}@epfl.ch}.}}
\begin{document}
\maketitle

\begin{abstract}
Many scientific and engineering applications feature nonsmooth convex minimization problems over convex sets. 
In this paper, we address an important instance of this broad class where we assume that the nonsmooth objective is equipped with a tractable proximity operator and that the convex constraint set affords a self-concordant barrier. 
We provide a new joint treatment of proximal and self-concordant barrier concepts  and illustrate that such problems can be efficiently solved, without the need of lifting the problem dimensions, as in disciplined convex optimization approach. 
We propose an inexact path-following algorithmic framework and theoretically characterize the worst-case analytical complexity of this framework when the proximal subproblems are solved inexactly. To show the merits of our framework, we apply its instances to both synthetic and real-world applications, where it shows advantages over standard interior point methods. As a by-product, we describe how our framework can obtain points on the Pareto frontier of regularized problems with self-concordant objectives in a tuning free fashion.
\end{abstract}

\begin{keywords}
Inexact path-following algorithm, self-concordant barrier, tractable proximity, proximal-Newton method, constrained convex optimization.
\end{keywords}

%%%%%%%%%%%%%%%%%%%%%%%%%%%%%%%%%%%%%%%%%%%%%%%%%%%%%%%%%%%%
%% 1. Introduction.
%%%%%%%%%%%%%%%%%%%%%%%%%%%%%%%%%%%%%%%%%%%%%%%%%%%%%%%%%%%%
\section{Problem statement and motivation}\label{sec:intro}
We consider the following constrained convex minimization problem, which has aplenty applications in diverse disciplines, including  machine learning, signal processing, statistics, and control \cite{Boyd1994, Harmany2012, Hassibi1999, Stadler2012}:
\begin{equation}\label{eq:constr_cvx_prob}
g^{*} := \min_{\xb\in\Omega} g(\xb).
\end{equation}
Here, $\Omega \subseteq \mathbb{R}^n$ is a nonempty, closed and convex set and $g$ is a (possibly) non-smooth convex function from $\mathbb{R}^n\to \mathbb{R}\cup\set{+\infty}$.

Problem \eqref{eq:constr_cvx_prob} is sufficiently generic to cover many optimization settings considered in the literature.
Under mild assumptions on $g$ and $\Omega$, general convex optimization methods such as  mirror descent, projected subgradient as well as Frank-Wolfe methods  can be applied to solve \eqref{eq:constr_cvx_prob}, as long as the projection on $\Omega$ can be computed efficiently \cite{Andreasson2006,Beck2003,Ben-Tal2001,Nesterov2009b}. 
Theoretically, such methods are usually slowly convergent (e.g., $O(1/\sqrt{k})$ global convergence rate, where $k$ is the iteration counter) and sensitive to the choice of step-sizes \cite{Nesterov2004}, which constitutes them impractical for many applications.

In the case  $\Omega$ has an explicit form, e.g., $\Omega := \set{\xb\in\R^n ~|~ c_i(\xb) \leq 0, ~i=1,\cdots, m}$, where $c_i(\cdot)$ are generic convex functions for $i=1,\cdots, m$, nonsmooth optimization methods such as level and bundle methods are also potential candidates to solve \eqref{eq:constr_cvx_prob} \cite{Bonnans1997,Karas2009,Lemarechal1995,Nesterov2004,Sagastizabal2005}. 
Though, similarly to subgradient methods, bundle schemes also have slow global convergence rate (e.g., $\mathcal{O}(1/\sqrt{k})$) when applying to nonsmooth problems, except for the cases where particular assumptions are made \cite{Lan2013a,Nesterov2004}.

When $g$ is a smooth term and projections on $\Omega$ are expensive to compute, sequential convex programming approach such as sequential quadratic programming (SQP) constitutes an efficient strategy for solving \eqref{eq:constr_cvx_prob}, see, e.g., \cite{Andreasson2006,Fletcher1987,Nocedal2006}. 
However, this approach is usually generic and still requires a globalization strategy to ensure convergence.
Along this line, the most famous class of algorithms is that of interior point methods (IPM) that solve standard conic programming problems in polynomial time \cite{BenTal2001,Nesterov1994}. The key structure exploited in conventional IPMs is the existence of a \emph{barrier function} for the feasible set $\Omega$ (cf., Section \ref{sec:background}).

In such cases, one considers the penalized family of parametric composite convex optimization problems:
\begin{equation}\label{eq:cvx_prob}
\min_{\xb \in \mathbf{R}^n} \set{ F(\xb; t) :=  f( \xb ) + t^{-1}g(\xb) },  
\end{equation} 
where $t > 0$ is a penalty parameter and $f$ is the barrier function over the set $\Omega$. By solving \eqref{eq:cvx_prob} for a sequence of decreasing $t$ values, i.e.,  $t \downarrow 0^{+}$, we can trace the analytic central path $\xb^{*}_t$ of \eqref{eq:cvx_prob} as it converges to the solution $\xb^{\ast}$ of \eqref{eq:constr_cvx_prob}. 

However, assuming no further structure in $f$, the resulting path-following scheme is not guaranteed to converge, and solving \eqref{eq:cvx_prob} becomes harder as $t \downarrow 0^{+}$; see, e.g., \cite{Nemirovski2008}. Aptly, Nesterov and Nemirovskii \cite{Nesterov2004} introduced the {\it self-concordance} concept (cf., Section \ref{sec:background} for definitions), which characterizes a broad collection of penalty functions $f$ and guarantees the polynomial-solvability of \eqref{eq:cvx_prob}, by sequentially using Newton methods. 

Within the IPM context, $g$ is usually assumed to be a smooth term. When $g$ is a \emph{nonsmooth} term, it has a direct impact on the computational effort. Such problems do occur frequently in applications.
Examples include but are not limited to sparse concentration matrix estimation with $\ell_1$-norm (eq. (11) in \cite{Ravikumar2011}), data clustering with $\ell_1$-norm (semidefinite programming reformulation in Section 4.1 of \cite{Jalali2012}), spectral line estimation with atomic norms (eq. (2.6) in \cite{Tang2012} and eq.\ (3.4) in \cite{Bhaskar2012}), etc. Since off-the-shelf IPMs usually approximate $g^{\ast}$ by solving a sequence of {\it smooth} problems \cite{Nemirovski2008,Nesterov2004}, $g$ in \eqref{eq:cvx_prob} must allow a reformulation where standard IP solvers can be applied
(i.e., via disciplined convex optimization (DCO) techniques \cite{Grant2006}).

Nevertheless, the DCO approach can inflate the problem dimensions, and often suffers from the curse-of-dimensionality.  As a concrete example, consider the max-norm clustering problem \cite{Jalali2012}, where we seek a clustering matrix $\mathbf{K}$ that minimizes disagreement with a given affinity matrix $\mathbf{A}$:
\begin{equation}\label{eq:clustering_prob}
\begin{array}{cl}
\displaystyle\min_{\mathbf{L}, \mathbf{R}, \mathbf{K} \in\mathbb{R}^{p\times p}} & \norm{\vec{\mathbf{K} - \mathbf{A}}}_1\\
\textrm{s.t.} & \begin{bmatrix}\mathbf{L} & \mathbf{K} \\ \mathbf{K}^T & \mathbf{R} \end{bmatrix} \succeq 0, ~\mathbf{L}_{ii} \leq 1, ~\mathbf{R}_{ii}\leq 1, ~i=1,\dots, p,
\end{array}
\end{equation}
where $\mathrm{vec}$ is the vectorization operator of a matrix (i.e., $\mathrm{vec}(\Xb) := (\Xb_1^T, \cdots, \Xb_n^T)^T$, where $\Xb_i$ is the $i$-th column of $\Xb$).
This non-smooth formulation affords rigorous theoretical guarantees for its solution quality and can be formulated as a standard conic program. Unfortunately, we need to add $\mathcal{O}(p^2)$ slack variables to process the $\ell_1$-norm term and  the linear constraints. Moreover, the scaling factors (e.g., the Nesterov-Todd scaling factor regarding the semidefinite cone \cite{Nesterov1997}) can create memory bottlenecks by destroying the sparsity of the underlying problem (e.g., by leading to dense KKT matrices in Newton systems). 

%Consequently, the efficiency of conventional IPM's significantly degrade.
\subsection{Our approach}
In general, when the penalty function $f$ has a Lipschitz continuous gradient \cite{Nesterov2004} and $g$ has a computable proximity operator (cf., Section \ref{sec:background} for definitions), several efficient convex optimization algorithms for solving \eqref{eq:cvx_prob} exist \cite{Bauschke2011,Beck2009,Nesterov2004,Nesterov2007}. However, to the best of our knowledge, there has been no unified framework for path-following schemes of \eqref{eq:cvx_prob} where $f$ is a self-concordant barrier (hence, {\it non-globally Lipschitz continuous gradient}) and $g$ is a non-smooth term with \textit{proximal tractability} (i.e., the proximal operator of $g$ is efficient to compute).

To this end, we  address \eqref{eq:constr_cvx_prob} with a new {\it proximal} path-following scheme, which solves \eqref{eq:cvx_prob} for a sequence of {\it adaptively selected} parameters $t_k$. Our scheme guarantees the following: If $\xb_{t_k}$ is an approximation of $\xb^{*}_{t_k}$ of \eqref{eq:constr_cvx_prob}, by solving instances of \eqref{eq:cvx_prob} for $t = t_k$---i.e., within some user-defined accuracy, then our method produces an approximate solution $\xb_{t_{k+1}}$ of $\xb^{*}_{t_{k+1}}$ of \eqref{eq:constr_cvx_prob} for $t = t_{k+1}$ within the same accuracy by performing only \textbf{one} inexact proximal-Newton (PN) step. 
Moreover, our scheme adaptively updates the regularization parameter to trace the path of solutions, towards the optimal solution of \eqref{eq:constr_cvx_prob}.
%\begin{align}\label{eq:barrier_cvx_subprob2}
%\xb^{k\!+\!1}_{t_{k\!+\!1}} \!:\approx\!   \mathrm{arg}\!\!\!\!\min_{\!\!\!\!\!\!\!\xb \in \dom{F}}{\!} \Big\{ &\nabla f(\xb^{k}_{t_k}\!)^T\!(\xb \!-\! \xb^{k}_{t_k}\!)  \!+\! \frac{1}{2}(\xb \!-\! \xb^{k}_{t_k}\!)^T\!\nabla^2{f}(\xb^{k}_{t_k}\!)(\xb \!-\! \xb^{k}_{t_k}\!)
%+ \frac{1}{t_{k\!+\!1}}g(\xb)\Big\},
%\end{align}
%\begin{small}
%\begin{align}\label{eq:barrier_cvx_subprob2}
%\xb^{k + 1}_{t_{k + 1}}  :\approx    \mathrm{arg}    \min_{       \xb \in \dom{F}}{ } \Big\{ &\nabla f(\xb^{k}_{t_k} )^T (\xb  -  \xb^{k}_{t_k} )   +  \frac{1}{2}(\xb  -  \xb^{k}_{t_k} )^T \nabla^2{f}(\xb^{k}_{t_k} )(\xb  -  \xb^{k}_{t_k} )
%+ \frac{1}{t_{k + 1}}g(\xb)\Big\},
%\end{align}
%\end{small}
%where $\dom{F} := \dom{f}\cap\dom{g}$, and  "$:\approx$" means that $\xb^{k+1}_{t_{k+1}}$ is an approximation to the true solution $\bar{\xb}^{k+1}_{t_{k+1}}$ of the minimization problem (c.f. Definition \ref{de:inexact_sol} below). 
%%This is the workhorse of our framework in a manner similar to the Newton schemes for standard path-following interior point methods \cite{Nesterov2004,Nesterov1994}. 
%I.e., we only need to solve \eqref{eq:barrier_cvx_subprob2} up to the required accuracy to obtain the new point $\xb^{k+1}_{t_{k+1}}$.

But, \emph{how such a scheme is advantageous in practice?} 
% We now highlight the two salient features of our scheme that set us apart from existing approaches:
First, due to the non-smoothness of the objective function, solving \eqref{eq:cvx_prob} can be a strenuous task. However, using proximal-Newton/gradient strategies to solve approximations of \eqref{eq:cvx_prob} has been a major research area over the last decade, broadly known as composite optimization, where many accurate and scalable algorithms are customized for different $g$ functions \cite{Beck2009,Becker2011b,Nesterov2007}. These methods are theoretically as fast as the advanced ``Hessian-free'' IPM techniques, which use conjugate gradients, since $\nabla^2f(\xb)\succ 0$ for self concordant-barriers. Our path following scheme leverages such algorithms as a black-box to solve \eqref{eq:constr_cvx_prob}: as we handle the non-smooth term $g$ directly with proximity operators, we retain the original problem structure (i.e., we do not inflate problem dimensions or add additional constraints by lifting the nonsmooth term via slack variables).

Second, adaptively updating the regularization parameter in composite optimization problems has itself attracted a great deal of interest; cf., the class of  homotopy and continuation methods \cite{Hale2008}. Many of these approaches lose their theoretical guarantees (if any) when the composite minimization problem has a self-concordant smooth term instead of a Lipschitz continuous gradient smooth term.  Our scheme provides a rigorous way of updating the regularizer weights and can be easily adapted for applications with self-concordant data terms  \cite{Harmany2012,Ravikumar2011,Stadler2012}, where none of these methods apply. 

%%% Contribution.
\vskip.1in
\noindent\textbf{Our contributions:}  
Our specific contributions in this paper are as follows:
\begin{itemize}
\item[(a)] We extend the notion of path-following scheme to handle composite forms in order to approximately track the solution trajectory of \eqref{eq:cvx_prob}.
As a consequence, we obtain an approximate solution of \eqref{eq:constr_cvx_prob} by controlling the parameter $t$ to $0^{+}$.

\item[(b)] We provide an explicit formula to adaptively update the parameter $t$ with convergence guarantees, without any manual tuning strategy. 
 
\item[(c)] We provide a theoretical analysis of the  worst-case analytical  complexity of our scheme to obtain a sequence of approximate solutions, as $t$ varies, while allows one to inexactly compute the proximal-Newton directions up to a given accuracy.
The worst-case analytical complexity of our method remains the same order as in conventional path-following interior point methods \cite{Nesterov2004}.
\end{itemize}

%% Outline.
\vskip0.15cm
\noindent\textbf{Paper outline. } 
Section \ref{sec:background} recalls the definitions of self-concordant functions and  barriers and sets up optimization preliminaries.  
Section \ref{sec:inexact_full_step_prox_Newton} deals with the inexact proximal-Newton iteration scheme for solving \eqref{eq:cvx_prob} at a fixed value of the parameter $t$.
Section \ref{sec:phase2_analysis} presents the path-following framework  with inexact proximal-Newton iterations and analyzes its convergence and worst-case analytical complexity.
Section \ref{sec:constrained_case} specifies our framework to solve constrained convex minimization problems of the form \eqref{eq:constr_cvx_prob}. 
Section \ref{sec:application} presents  numerical experiments that highlight the strengths and weaknesses of our framework.
Technical proofs are given in the appendix.
%%%%%%%%%%%%%%%%%%%%%%%%%%%%%%%%%%%%%%%%%%%%%%%%%%%%%%%%%%%%%
%%% 2. Background.
%%%%%%%%%%%%%%%%%%%%%%%%%%%%%%%%%%%%%%%%%%%%%%%%%%%%%%%%%%%%%
\section{Preliminaries}\label{sec:background} 
In this section, we set up the necessary notation, definitions and basic properties related to problem \eqref{eq:cvx_prob}.

%%%%%%%%%%%%%%%%%%%%%%%%%%%%%%%%%%%%%%%%%%%%%%%%%%%%%%%%%%%%%
%%% 2.1. Basic definitions.
%%%%%%%%%%%%%%%%%%%%%%%%%%%%%%%%%%%%%%%%%%%%%%%%%%%%%%%%%%%%%
\subsection{Basic definitions} 
Given $\mathbf{x}, \mathbf{y} \in \mathbb{R}^n$, we use $\mathbf{x}^T\mathbf{y}$ to denote the inner product in $\mathbb{R}^n$. 
For a proper, lower semicontinuous convex function $f$, we denote its domain by $\dom{f}$ (i.e., $\dom{f} := \set{\xb\in\mathbb{R}^n ~|~ f(\xb) < + \infty}$ and its subdifferential at $\xb$ by $\partial{f}(\xb) := \set{\vb\in\mathbb{R}^n~|~ f(\yb) \geq f(\xb) + \vb^T(\yb - \xb), ~\forall\yb\in\dom{f} }$. We also define $\mathrm{Dom}(f) := \mathrm{cl}(\dom{f})$ the closure of $\dom{f}$ \cite{Rockafellar1970}.

For a given twice differentiable function $f$ such that $\nabla^2f(\xb) \succ 0 $ at $\xb\in\dom{f}$, we define the
local norm $\norm{\ub}_{\xb} := [\ub^T\nabla^2 f(\xb)\ub]^{1/2}$ for any $\ub\in\mathbb{R}^n$ while the dual norm is given by
$\norm{\vb}_{\xb}^{*} := \max_{\norm{\ub}_{\xb} \leq 1}\ub^T\vb = [\vb^T\nabla^2f(\xb)^{-1}\vb]^{1/2}$ for any $\vb\in\mathbb{R}^n$. 
It is clear that the Cauchy-Schwarz inequality holds, i.e., $\ub^T\vb \leq \norm{\ub}_{\xb}\norm{\vb}^{*}_{\xb}$.
For our analysis, we also use two simple convex functions $\omega(t) := t - \ln(1+t)$ for $t \geq 0$ and $\omega_{*}(t) := -t - \ln(1 - t)$ for $t \in [0, 1)$, which are strictly increasing in their domain. 

An important concept in this paper is the self-concordance property \cite{Nesterov2004,Nesterov1994}.
% Definition 2.1.
\begin{definition}\label{de:concordant}
A convex function $\varphi: \dom{\varphi}\subseteq\R \rightarrow \mathbb{R} $ is called standard self-concordant if $\abs{\varphi'''(\tau)} \leq 2\varphi''(\tau)^{3/2}$  for all $\tau\in\dom{\varphi}$. 
A function $f:\dom{f}\subseteq \mathbb{R}^{n} \rightarrow \mathbb{R}$ is self-concordant if $\mathbf{x}\in\dom{f}$ and $\mathbf{v}\in\mathbb{R}^n$ such that $\xb + \tau\vb \in \dom{f}$, the function $\varphi(\tau) := f(\xb + \tau \vb)$ is standard self-concordant .
\end{definition}

% Definition 2.2.
\begin{definition}
A standard self-concordant function $f : \dom{f}\to\R$ is a $\nu$-self-concordant barrier for the set $\mathrm{Dom}(f)$ with parameter $\nu > 0$, if 
\begin{equation*}
\sup_{\ub \in\mathbb{R}^n} \left \{2\nabla{f}(\xb)^T\ub - \Vert \ub\Vert_{\xb}^2\right \} \leq \nu, ~~\forall \xb\in\dom{f}.
\end{equation*} 
\end{definition}
%% End of the definition.

We note that when $\nabla^2 f$ is non-degenerate (particularly, $\dom{f}$ contains no straight line \cite[Theorem 4.1.3.]{Nesterov2004}), a $\nu$-self-concordant function $f$ satisfies
\begin{align}{\label{eq:used}}
\norm{\nabla f(\xb)}_{\xb}^{\ast} \leq \sqrt{\nu}, ~~\forall \xb\in\dom{f}.
\end{align}
For more details on self-concordant functions and self-concordant barriers, we refer the reader to Chapter 4 of \cite{Nesterov2004}. 
Several simple sets are equipped with a self-concordant barrier. 
For instance, $f_{\mathbb{R}^n_{+}}(\xb) := -\sum_{i=1}^n\log(x_i)$ is an $n$-self-concordant barrier of the orthogonal cone $\mathbb{R}^n_{+}$, $f(\xb, t) = -\log(t^2 - \norm{\xb}_2^2)$ is a $2$-self-concordant barrier of the Lorentz cone $\mathcal{L}_{n+1} := \set{(\xb, t) \in\mathbb{R}^n\times\mathbb{R}_{+} ~|~ \norm{\xb}_2 \leq t}$, and the semidefinite cone $\mathcal{S}^n_{+}$ is endowed with an $n$-self-concordant barrier $f_{\mathcal{S}_{+}^n}(\mathbf{X}) := -\log\det(\mathbf{X})$. 

Given these definitions, we are now ready to state our main assumption used throughout this paper.
%% Assumption 3.1.
\begin{assumption}\label{as:A2}
The function $f$ in \eqref{eq:cvx_prob} is a $\nu$-self-concordant barrier with $\nu > 0$ for $\mathrm{Dom}(f)$. 
The function $g : \mathbb{R}^n\to\mathbb{R}\cup\set{+\infty}$ is proper, lower semi-continuous and convex.
\end{assumption}

% 2.1. Optimality condition of 1.1
\subsection{Optimality condition of \eqref{eq:cvx_prob}} 
Given $t > 0$, we assume that problem \eqref{eq:cvx_prob} has a solution $\xb^{*}_t$. Since $f$ is strictly convex, this solution is also unique.
The optimality condition of \eqref{eq:cvx_prob} can be written as
\begin{equation}\label{eq:optimality_for_ln_barrier}
\mathbf{0} \in \nabla{f}(\xb^{*}_t) + t^{-1}\partial{g}(\xb^{*}_t).
\end{equation} 
The formula \eqref{eq:optimality_for_ln_barrier} expresses a \textit{monotone inclusion} \cite{Facchinei2003}.
If $g$ is smooth, \eqref{eq:optimality_for_ln_barrier} reduces to $\nabla{f}(\xb^{*}_t) + t^{-1}\nabla{g}(\xb^{*}_t) = \mathbf{0}$, a system of nonlinear equations. 
Any $\xb^{*}_t$ satisfying \eqref{eq:optimality_for_ln_barrier} is called a stationary point of \eqref{eq:cvx_prob}, which is also a global optimum of \eqref{eq:cvx_prob}, for given $t > 0$. 
Let $\dom{F} \equiv \dom{F(\cdot; t)} := \dom{f}\cap\dom{g}$ for fixed $t > 0$. Then $\xb^{*}_t \in\dom{F}$.

% Definition 2.3.
\begin{definition}\label{de:prox_oper}
Let $\xb \in \dom{f}$ such that $\nabla^2{f}(\xb) \succ 0$ and let $\mathbf{s} \in \mathbb{R}^n$ be an arbitrary given point. 
We define the operator $P^g_{\xb}(\cdot; t)$ with an input $\mathbf{s}$ and a parameter $t > 0$ as follows:
\begin{equation}\label{eq:P_x}
P^g_{\xb}(\mathbf{s}; t) = \mathrm{arg}\!\!\min_{\yb \in \mathbb{R}^n}\set{ t^{-1}g(\yb) + \frac{1}{2}\yb^T\nabla^2f(\xb)\yb - \mathbf{s}^T\yb}.
\end{equation} 
\end{definition}
Since $\nabla^2{f}(\xb) \succ 0$, we can write \eqref{de:prox_oper} as
\begin{equation*}
P^g_{\xb}(\mathbf{s}; t) = \mathrm{arg}\!\!\min_{\yb \in \mathbb{R}^n}\set{ g(\yb) + \frac{t}{2}\norm{\yb - \nabla^2f(\xb)^{-1}\mathbf{s}}_{\xb}^2},
\end{equation*} 
which requires to compute the proximal operator of $g$ at $\nabla^2f(\xb)^{-1}\mathbf{s}$ w.r.t. the weighted norm $\norm{\cdot}_{\xb}$.
Given $\xb$ and $\mathbf{s}$ as defined above, we define the following mapping:
\begin{equation}\label{eq:S_x}
S_{\xb}(\mathbf{s}) := \nabla^2{f}(\xb)\mathbf{s} - \nabla{f}(\mathbf{s}). 
\end{equation}
The optimality condition in \eqref{eq:optimality_for_ln_barrier} implies the following fixed-point characterization of the mapping $P^g_{\xb}(\cdot; t)$. The proof can be found in \cite{TranDinh2013c}.

% Lemma 2.4.
\begin{lemma}
Let $t > 0$ be fixed. Then, the mapping $P_{\xb}^g(\cdot;t)$ defined in \eqref{eq:P_x} is co-coercive and therefore nonexpansive w.r.t. the local norms, i.e.:
\begin{align}
\mathrm{[co\textrm{-}coercive]:} ~~~~~& \left(P^g_{\xb}(\ub; t) - P^g_{\xb}(\vb; t)\right)^T(\ub - \vb) \geq \norm{P^g_{\xb}(\ub; t) - P^g_{\xb}(\vb; t)}_{\xb}^2, \label{eq:cocoercive}\\
\mathrm{[nonexpansive]:}~~ &\norm{P^g_{\xb}(\ub; t) - P^g_{\xb}(\vb; t)}_{\xb} \leq \norm{\ub - \vb}_{\xb}^{*}, ~\forall \ub, \vb \in \mathbb{R}^n. \label{eq:nonexapansiveness}
\end{align}
Furthermore, the following fixed-point characterization holds:
\begin{equation}\label{eq:fixed_point_xstar}
\xb^{*}_t = P^g_{\xb^{*}_t}\left( S_{\xb^{*}_t}(\xb^{*}_t); t\right),
\end{equation}
where $\xb^{\ast}_t \in \dom{F}$ is the minimizer of \eqref{eq:cvx_prob}, i.e., $\xb^{\ast}_t \in \mathrm{arg}\!\!\displaystyle\min_{\xb \in \mathbb{R}^n} F(\xb; t)$. 
\end{lemma}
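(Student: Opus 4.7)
The plan is to handle all three claims from the optimality condition that characterizes $P^g_{\xb}(\cdot;t)$. Writing $y_u := P^g_{\xb}(\ub;t)$ and $y_v := P^g_{\xb}(\vb;t)$ and using the first-order optimality of the strongly convex subproblem \eqref{eq:P_x}, I would obtain the inclusions $\ub - \nabla^2 f(\xb) y_u \in t^{-1}\partial g(y_u)$ and $\vb - \nabla^2 f(\xb) y_v \in t^{-1}\partial g(y_v)$. The monotonicity of the convex subdifferential $\partial g$ applied to these two inclusions yields
\begin{equation*}
(y_u - y_v)^T\bigl[(\ub - \vb) - \nabla^2 f(\xb)(y_u - y_v)\bigr] \geq 0,
\end{equation*}
which rearranges to $(y_u - y_v)^T(\ub - \vb) \geq \norm{y_u - y_v}_{\xb}^2$, i.e., the co-coercivity bound \eqref{eq:cocoercive}.

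For nonexpansiveness \eqref{eq:nonexapansiveness}, I would combine this co-coercivity inequality with the Cauchy--Schwarz inequality in the local norm, $(y_u - y_v)^T(\ub - \vb) \leq \norm{y_u - y_v}_{\xb}\,\norm{\ub - \vb}^{*}_{\xb}$, and divide by $\norm{y_u - y_v}_{\xb}$ (treating the degenerate case $y_u = y_v$ separately, where the claim is trivial).

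Finally, for the fixed-point characterization \eqref{eq:fixed_point_xstar}, I would plug $\mathbf{s} = S_{\xb^{*}_t}(\xb^{*}_t) = \nabla^2 f(\xb^{*}_t)\xb^{*}_t - \nabla f(\xb^{*}_t)$ into the optimality condition of the subproblem defining $P^g_{\xb^{*}_t}(\cdot;t)$. The resulting inclusion is
\begin{equation*}
\mathbf{0} \in t^{-1}\partial g(y) + \nabla^2 f(\xb^{*}_t)\,y - \nabla^2 f(\xb^{*}_t)\,\xb^{*}_t + \nabla f(\xb^{*}_t),
\end{equation*}
and substituting $y = \xb^{*}_t$ reduces this to exactly the monotone inclusion \eqref{eq:optimality_for_ln_barrier}, which is known to hold at the minimizer. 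Since $\nabla^2 f(\xb^{*}_t) \succ 0$ makes the subproblem strictly convex and hence its solution unique, this verifies $\xb^{*}_t = P^g_{\xb^{*}_t}(S_{\xb^{*}_t}(\xb^{*}_t); t)$.

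No step is a serious obstacle; the only subtlety worth flagging is that both the co-coercivity and nonexpansiveness are stated in the local geometry induced by $\nabla^2 f(\xb)$, so one must be careful to contract $(y_u - y_v)^T \nabla^2 f(\xb)(y_u - y_v)$ as $\norm{y_u - y_v}_{\xb}^2$ rather than mixing primal and dual norms, and to apply the local-norm Cauchy--Schwarz in the correct direction. Everything else is a direct consequence of monotonicity of $\partial g$ and strict convexity of the proximal subproblem.
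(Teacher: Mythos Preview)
Your proposal is correct. The paper itself does not supply a proof for this lemma; it only states that ``the proof can be found in \cite{TranDinh2013c}.'' Your argument is precisely the standard one: extract the optimality inclusions from \eqref{eq:P_x}, apply monotonicity of $\partial g$ to get co-coercivity, then the local-norm Cauchy--Schwarz inequality to get nonexpansiveness, and finally verify the fixed-point identity by matching the subproblem's optimality condition at $\xb^{*}_t$ with \eqref{eq:optimality_for_ln_barrier}. There is nothing to add; this is exactly how the cited reference would proceed.
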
 

\noindent For our convergence analysis, we also need the following result.
% Lemma 2.5.
\begin{lemma}\label{le:lower_bound}
For fixed $t > 0$, let $\xb^{*}_t$ be the unique solution of \eqref{eq:cvx_prob}. Then, for any $\xb\in\dom{F}$, the following estimate holds:
\begin{equation}\label{eq:lower_bound}
\omega\left(\norm{\xb - \xb^{*}_t}_{\xb^{*}_t}\right) \leq F(\xb; t) - F(\xb^{*}_t; t).
\end{equation}
\end{lemma}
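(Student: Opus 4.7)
The plan is to combine the standard lower bound for self-concordant functions with a subgradient inequality for $g$, using the optimality condition \eqref{eq:optimality_for_ln_barrier} to cancel the linear term.

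First, I would invoke a well-known property of standard self-concordant functions (see, e.g., \cite{Nesterov2004}, Theorem 4.1.8): if $f$ is standard self-concordant and $\xb, \yb \in \dom{f}$, then
\begin{equation*}
f(\yb) \geq f(\xb) + \nabla f(\xb)^T(\yb - \xb) + \omega\left(\norm{\yb - \xb}_{\xb}\right).
\end{equation*}
Applied at the base point $\xb^{*}_t$, this yields
\begin{equation*}
f(\xb) \geq f(\xb^{*}_t) + \nabla f(\xb^{*}_t)^T(\xb - \xb^{*}_t) + \omega\left(\norm{\xb - \xb^{*}_t}_{\xb^{*}_t}\right).
\end{equation*}

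Next, I would use the optimality condition \eqref{eq:optimality_for_ln_barrier}: there exists $\vb^{*} \in \partial g(\xb^{*}_t)$ with $\nabla f(\xb^{*}_t) + t^{-1}\vb^{*} = \mathbf{0}$, i.e. $\vb^{*} = -t\,\nabla f(\xb^{*}_t)$. The convexity of $g$ then gives the subgradient inequality
\begin{equation*}
g(\xb) \geq g(\xb^{*}_t) + (\vb^{*})^T(\xb - \xb^{*}_t) = g(\xb^{*}_t) - t\,\nabla f(\xb^{*}_t)^T(\xb - \xb^{*}_t).
\end{equation*}

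Finally, I would add the first inequality to $t^{-1}$ times the second. The linear terms in $\nabla f(\xb^{*}_t)^T(\xb - \xb^{*}_t)$ cancel exactly, leaving
\begin{equation*}
f(\xb) + t^{-1}g(\xb) \geq f(\xb^{*}_t) + t^{-1}g(\xb^{*}_t) + \omega\left(\norm{\xb - \xb^{*}_t}_{\xb^{*}_t}\right),
\end{equation*}
which is exactly $F(\xb;t) - F(\xb^{*}_t;t) \geq \omega(\norm{\xb - \xb^{*}_t}_{\xb^{*}_t})$, as desired. The only mild subtlety is that the self-concordant lower bound is usually stated assuming $\xb \in \dom{f}$; since $\xb \in \dom{F} = \dom{f}\cap\dom{g}$ by hypothesis, this is automatic, and no additional care is needed. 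The argument is essentially routine once the two ingredients (self-concordant lower bound and subgradient inequality) are in hand, so I do not anticipate a genuine obstacle.
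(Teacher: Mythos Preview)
Your proof is correct and follows essentially the same approach as the paper: both combine the self-concordant lower bound $f(\xb) \geq f(\xb^{*}_t) + \nabla f(\xb^{*}_t)^T(\xb-\xb^{*}_t) + \omega(\norm{\xb-\xb^{*}_t}_{\xb^{*}_t})$ with the subgradient inequality for $g$, and then use the optimality condition \eqref{eq:optimality_for_ln_barrier} to cancel the linear term. The paper simply presents these steps as a single chain of inequalities rather than writing them out separately.
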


% Proof of Lemma 2.5.
\begin{proof}
By the self-concordance property of $f$ for any $\xb\in\dom{F}$, the convexity of $g$ and \eqref{eq:optimality_for_ln_barrier} it follows that there exists $\vb_t^{*} \in \partial{g}(\xb^{*}_t)$ such that
\begin{align*}
F(\xb;t) - F(\xb^{*}_t;t) &= f(\xb) - f(\xb^{*}_t) + t^{-1}\left(g(\xb) - g(\xb^{*}_t) \right) \nonumber\\
&\geq \left(\nabla{f}(\xb^{*}_t) + t^{-1}\vb_t^{*}\right)^T(\xb - \xb^{*}_t) + \omega\left(\norm{\xb - \xb^{*}_t}_{\xb^{*}_t}\right), ~~\vb_t^{*} \in \partial{g}(\xb^{*}_t), \nonumber\\
&\overset{\tiny\eqref{eq:optimality_for_ln_barrier}}{=} \omega\left(\norm{\xb - \xb^{*}_t}_{\xb^{*}_t}\right),
\end{align*}
which is indeed \eqref{eq:lower_bound}.
\end{proof}
% End of the proof.

%%%%%%%%%%%%%%%%%%%%%%%%%%%%%%%%%%%%%%%%%%%%%%%%%%%%%%%%%%%%%
%%% 2. Inexact full-step proximal-Newton iterations}
%%%%%%%%%%%%%%%%%%%%%%%%%%%%%%%%%%%%%%%%%%%%%%%%%%%%%%%%%%%%%
\section{Proximal-Newton iterations for fixed $t$}\label{sec:inexact_full_step_prox_Newton}
Let us consider the unconstrained problem \eqref{eq:cvx_prob} for a given fixed parameter value $t > 0$. 
Since $f$ is self-concordant, we can approximate it around $\xb_t \in \dom{F}$ via the second order Taylor series expansion:
\begin{align}\label{eq:quad_approx}
Q(\xb; \xb_t) := f(\xb_t) \!+\! \nabla f(\xb_t)^T (\xb \!-\! \xb_t) \!+\!  \frac{1}{2}(\xb \!-\! \xb_t)^T \nabla^2 f(\xb_t)(\xb \!-\! \xb_t).
\end{align}
Given this quadratic surrogate of $f$, we can approximate $F(\xb; t)$ around $\xb_t$ as:
\begin{align}\label{eq:F_x}
\widehat{F}(\xb; t, \xb_t) := Q(\xb; \xb_t) + t^{-1}g(\xb).
\end{align} 
Starting from an arbitrary initial point $\xb^0_t\in\dom{F}$ and given a fixed value $t > 0$, the \textit{inexact} full-step proximal-Newton method for solving \eqref{eq:cvx_prob}  generates a sequence of points, by approximately minimizing the composite quadratic model \eqref{eq:F_x} as
\begin{align}\label{eq:barrier_cvx_subprob1}
\left\{
	\begin{array}{ll}
		\xb^+_t  \approx \argmin_{\xb\in\dom{F}}  \widehat{F}(\xb; t, \xb_t). \\
		\xb_t \leftarrow \xb^+_t
	\end{array}
\right.
\end{align} 
Here, the ``approximation'' sense ($\approx$) highlights the inability of numerical methods to iteratively solve \eqref{eq:barrier_cvx_subprob1} with \emph{exact accuracy} and $\leftarrow$ indicates the assignment operator. 

Assume that $\nabla^2f(\xb_t) \succ 0$, then the minimization problem in \eqref{eq:barrier_cvx_subprob1}  is a strongly convex program and it has the  unique \textit{exact} solution $\bar{\xb}^+_t$, i.e., 
\begin{align*}
\left\{
	\begin{array}{ll}
		\bar{\xb}^+_t  := \argmin_{\xb\in\dom{F}}  \widehat{F}(\xb; t, \xb_t). \\
		\xb_t \leftarrow \bar{\xb}^+_t
	\end{array}
\right.
\end{align*} Moreover,  the following optimality condition holds
\begin{equation}\label{eq:opt_cond_subprob}
\mathbf{0} \in \nabla{f}(\xb_t) + \nabla^2f(\xb_t)(\bar{\xb}^+_t - \xb_t) + t^{-1}\partial{g}(\bar{\xb}^+_t).
\end{equation}
Due to \eqref{eq:optimality_for_ln_barrier}, it is obvious to show that, if $\bar{\xb}^+_t \equiv \xb_t$, then  $\xb_t$ is the optimal solution of \eqref{eq:cvx_prob} for fixed $t$.

\begin{remark}
In Phase I of Algorithm \ref{alg:PF_PN_alg} below, we do not require $\nabla^2f(\xb)$ to be positive definite for all $\xb\in\dom{F}$ as long as the subproblem \eqref{eq:barrier_cvx_subprob1} has solution. 
The positive definiteness of $\nabla^2f(\xb)$ is only needed in Phase II of this algorithm, where the path-following iterations take place.
\end{remark}

The following table disambiguates the various notions introduced in this section:
\begin{table*}[!htb]
\vspace{-0.4cm}
%\caption{Notational convention\label{table:x_def}} 
\centering
\begin{tabular}{c c c} \toprule
\multicolumn{1}{c}{Notation} & \phantom{ab} & \multicolumn{1}{c}{Description} \\
\cmidrule{1-1} \cmidrule{3-3} 
\multicolumn{1}{c}{$\xb_t^{*}$} & & \multicolumn{1}{c}{Exact solution of \eqref{eq:cvx_prob} for fixed $t$.} \\ 
\multicolumn{1}{c}{$\bar{\xb}_t^+$} & & \multicolumn{1}{c}{Exact solution of \eqref{eq:F_x} around $\xb_t$ for fixed $t$.} \\ 
\multicolumn{1}{c}{$\xb_t^+$} & & \multicolumn{1}{c}{Inexact solution of \eqref{eq:F_x} around $\xb_t$ for fixed $t$.} \\ 
\bottomrule
\end{tabular}
\vskip-0.3cm
\end{table*}

% 3.1. Approximate solution characterizations for 3.1.
\subsection{Inexact solutions of \eqref{eq:barrier_cvx_subprob1}}
In practice, computing $\bar{\xb}_t^+$ is \textit{infeasible} except for special cases. 
Thus, we can only solve \eqref{eq:barrier_cvx_subprob1} up to a given accuracy $\delta \geq 0$
using algorithmic solutions such as fast proximal-gradient methods and alternating direction methods of multipliers \cite{Beck2009,Nesterov2004,Nesterov2007,Yang2011} in the following sense.

% Definition 3.1.
\begin{definition}\label{de:inexact_sol}
Given $t > 0$ and  a tolerance $\delta \geq 0$, a point $\xb^+_t\in\dom{F}$ is called a $\delta$-solution to \eqref{eq:barrier_cvx_subprob1} if 
\begin{equation}\label{eq:approx_sol}
\widehat{F}(\xb^+_t;t, \xb_t) - \widehat{F}(\bar{\xb}^+_t;t, \xb_t)  \leq \frac{\delta^2}{2}.
\end{equation}
\end{definition} 
% End of Definition 3.1.

A useful inequality for our subsequent developments is given in the next lemma.
% Lemma 3.2.
\begin{lemma}
Given fixed $t > 0$, the following inequality holds $\forall \xb \in \dom{F}$:
\begin{equation}\label{eq:inexact_sol2}
\frac{1}{2}\norm{\xb - \bar{\xb}^+_t}_{\xb_t}^2 \leq \widehat{F}(\xb; t, \xb_t) - \widehat{F}(\bar{\xb}^+_t; t, \xb_t), ~~\forall \xb \in \dom{F}.
\end{equation}
\end{lemma}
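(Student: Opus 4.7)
The plan is to exploit the fact that $\widehat{F}(\cdot; t, \xb_t) = Q(\cdot; \xb_t) + t^{-1}g(\cdot)$ is the sum of an exact quadratic with Hessian $\nabla^2 f(\xb_t)$ and the convex function $t^{-1}g$. Since the quadratic part has constant Hessian $\nabla^2 f(\xb_t)$, it is automatically $1$-strongly convex with respect to the local norm $\|\cdot\|_{\xb_t}$ (the Hessian is exactly the one defining this norm), so no self-concordance argument is needed at this stage. The strategy is therefore: write a second-order Taylor expansion of $Q$ around $\bar{\xb}^+_t$ (which is exact because $Q$ is quadratic), combine it with the subgradient inequality for $g$, and invoke the optimality condition \eqref{eq:opt_cond_subprob} to cancel the linear terms.

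Concretely, I would proceed as follows. First, since $Q(\cdot; \xb_t)$ is quadratic with Hessian $\nabla^2 f(\xb_t)$, expanding around $\bar{\xb}^+_t$ gives
\begin{equation*}
Q(\xb;\xb_t) = Q(\bar{\xb}^+_t;\xb_t) + \nabla Q(\bar{\xb}^+_t;\xb_t)^T(\xb - \bar{\xb}^+_t) + \tfrac{1}{2}\|\xb - \bar{\xb}^+_t\|_{\xb_t}^2,
\end{equation*}
where $\nabla Q(\bar{\xb}^+_t;\xb_t) = \nabla f(\xb_t) + \nabla^2 f(\xb_t)(\bar{\xb}^+_t - \xb_t)$. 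Second, by the optimality condition \eqref{eq:opt_cond_subprob}, there exists $\vb \in \partial g(\bar{\xb}^+_t)$ such that $\nabla f(\xb_t) + \nabla^2 f(\xb_t)(\bar{\xb}^+_t - \xb_t) + t^{-1}\vb = \mathbf{0}$. Third, the subgradient inequality for $g$ at $\bar{\xb}^+_t$ yields $g(\xb) \geq g(\bar{\xb}^+_t) + \vb^T(\xb - \bar{\xb}^+_t)$.

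Adding $t^{-1}$ times the third inequality to the first identity and using the second relation to cancel the linear term in $(\xb - \bar{\xb}^+_t)$ gives exactly
\begin{equation*}
\widehat{F}(\xb;t,\xb_t) - \widehat{F}(\bar{\xb}^+_t;t,\xb_t) \geq \tfrac{1}{2}\|\xb - \bar{\xb}^+_t\|_{\xb_t}^2,
\end{equation*}
which is \eqref{eq:inexact_sol2}. There is no real obstacle here; the only point that warrants a line of explanation is why the expansion of $Q$ is an equality rather than an inequality, namely that $Q$ is quadratic by construction so no higher-order remainder appears. Note also that we do not need $\nabla^2 f(\xb_t) \succ 0$ for the inequality to be meaningful (it becomes vacuous on the kernel of the Hessian), consistent with the remark that positive definiteness is required only for Phase II.
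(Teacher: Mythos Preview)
Your proof is correct and follows essentially the same approach as the paper: the paper's own proof is a one-line reference back to the argument of Lemma~\ref{le:lower_bound}, namely combining the optimality condition~\eqref{eq:opt_cond_subprob} with the convexity of $g$ to cancel the first-order term, the only difference here being that $Q$ is exactly quadratic so the remainder is $\tfrac12\|\cdot\|_{\xb_t}^2$ rather than $\omega(\cdot)$. Your remark about not needing $\nabla^2 f(\xb_t)\succ 0$ is a harmless aside; the paper does invoke positive definiteness, but only to ensure $\bar\xb_t^+$ is well-defined as the unique minimizer.
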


% The proof of Lemma 3.2.
\begin{proof}
Since $\nabla^2f(\xb_t) \succ 0$, the proof follows similar motions with the proof of Lemma \ref{le:lower_bound}, based on the optimality condition \eqref{eq:opt_cond_subprob} and the convexity of $g$.
\end{proof} 
% End of the proof.

This lemma, in combination with Definition \ref{de:inexact_sol}, indicates that, if we can find a $\delta$-solution $\xb^+_t$ using \eqref{eq:barrier_cvx_subprob1}, then
\begin{equation}\label{eq:computable_criterion2}
\norm{\xb^+_t - \bar{\xb}^+_t}_{\xb_t} \leq \delta.
\end{equation}

% 3.2. Contraction property of approximate proximal-Newton iterations.
\subsection{Contraction property of inexact proximal-Newton iterations}{\label{sec:subsub}}
In this subsection, we provide a theoretical characterization of the per-iteration behavior of the inexact full-step proximal-Newton scheme \eqref{eq:barrier_cvx_subprob1} for fixed $t > 0$.
Let $\xb_t^+ \in \dom{F}$ be a $\delta$-solution of \eqref{eq:barrier_cvx_subprob1} and let $\xb^{*}_t$ be the exact solution of \eqref{eq:cvx_prob}.
We define
\begin{equation}\label{eq:prox_Newton_decrement}
\lambda_t := \norm{\xb_t - \xb^{*}_{t}}_{\xb^{*}_{t}} \quad \text{and} \quad \lambda_t^+ := \norm{\xb_t^+ - \xb^{*}_{t}}_{\xb^{*}_{t}}, 
\end{equation} 
as the {\it weighted} distance between $\xb_t/\xb^{*}_t$ and $\xb_t^+/\xb^{*}_t$, respectively.  The following theorem characterizes the contraction properties of $\lambda_t$; the proof can be found in the appendix.

%% Theorem 3.2.
\begin{theorem}\label{th:quad_converg_FPNM}
Given $\xb_t \in \dom{F}$, let $\xb^+_t$ be a $\delta$-solution of \eqref{eq:barrier_cvx_subprob1} for a given $\delta \geq 0$. Then, if $\lambda_t \in [0, 1 - \frac{\sqrt{2}}{2})$, we have
\begin{equation}\label{eq:FPNM_estimate}
\lambda^+_t  \leq \frac{\delta}{1-\lambda_t} + \left(\frac{3 - 2\lambda_t}{1 - 4\lambda_t + 2\lambda_t^2}\right)\lambda_t^2.
\end{equation}
Moreover, the right-hand side of \eqref{eq:FPNM_estimate} is nondecreasing w.r.t.\ $\lambda_t$ and $\delta \geq 0$.
\end{theorem}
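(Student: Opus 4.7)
The plan is to split $\lambda^+_t$ by the triangle inequality around the exact subproblem minimizer $\bar{\xb}^+_t$ from \eqref{eq:barrier_cvx_subprob1}: $\lambda^+_t \leq \norm{\xb^+_t - \bar{\xb}^+_t}_{\xb^*_t} + \norm{\bar{\xb}^+_t - \xb^*_t}_{\xb^*_t}$. Since $\lambda_t = \norm{\xb_t - \xb^*_t}_{\xb^*_t} < 1$, the standard self-concordance Hessian sandwich $(1-\lambda_t)^2\nabla^2 f(\xb^*_t) \preceq \nabla^2 f(\xb_t) \preceq (1-\lambda_t)^{-2}\nabla^2 f(\xb^*_t)$ yields $\norm{\cdot}_{\xb^*_t} \leq (1-\lambda_t)^{-1}\norm{\cdot}_{\xb_t}$, and combining this with the $\delta$-accuracy bound \eqref{eq:computable_criterion2} immediately produces the $\delta/(1-\lambda_t)$ term of \eqref{eq:FPNM_estimate}.

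For the Newton-contraction piece $\norm{\bar{\xb}^+_t - \xb^*_t}_{\xb^*_t}$, I would write the optimality conditions \eqref{eq:opt_cond_subprob} and \eqref{eq:optimality_for_ln_barrier}, pick subgradients $\bar{\vb}^+ \in \partial g(\bar{\xb}^+_t)$ and $\vb^{*} \in \partial g(\xb^{*}_t)$ compatible with them, and invoke monotonicity $(\bar{\vb}^+ - \vb^{*})^T(\bar{\xb}^+_t - \xb^*_t) \geq 0$. Substituting the explicit forms and rearranging produces
\[
\norm{\ub}_{\xb_t}^2 \leq \ub^T\eb, \qquad \ub := \bar{\xb}^+_t - \xb^*_t, \qquad \eb := \nabla f(\xb^*_t) - \nabla f(\xb_t) - \nabla^2 f(\xb_t)(\xb^*_t - \xb_t).
\]
Applying Cauchy-Schwarz in the $\xb^*_t$-norm on the right, and the sandwich bound $\norm{\ub}_{\xb_t}^2 \geq (1-\lambda_t)^2\norm{\ub}_{\xb^*_t}^2$ on the left, then gives $\norm{\bar{\xb}^+_t - \xb^*_t}_{\xb^*_t} \leq \norm{\eb}_{\xb^*_t}^{*}/(1-\lambda_t)^2$.

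The crux is the estimate for $\norm{\eb}_{\xb^*_t}^{*}$, for which I would split
\[
\eb = \underbrace{[\nabla f(\xb^*_t) - \nabla f(\xb_t) - \nabla^2 f(\xb^*_t)(\xb^*_t - \xb_t)]}_{\eb_A} + \underbrace{[\nabla^2 f(\xb^*_t) - \nabla^2 f(\xb_t)](\xb^*_t - \xb_t)}_{\eb_B}.
\]
Nesterov's standard gradient-residual estimate anchored at $\xb^*_t$ (Thm.~4.1.14 in \cite{Nesterov2004}) bounds $\norm{\eb_A}_{\xb^*_t}^{*} \leq \lambda_t^2/(1-\lambda_t)$, while the sandwich implies that the $\xb^*_t$-spectral norm of $\nabla^2 f(\xb^*_t) - \nabla^2 f(\xb_t)$ is at most $\lambda_t(2-\lambda_t)/(1-\lambda_t)^2$, giving $\norm{\eb_B}_{\xb^*_t}^{*} \leq \lambda_t^2(2-\lambda_t)/(1-\lambda_t)^2$. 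Adding and factoring $(1-\lambda_t)+(2-\lambda_t)=3-2\lambda_t$ produces $\norm{\eb}_{\xb^*_t}^{*} \leq \lambda_t^2(3-2\lambda_t)/(1-\lambda_t)^2$, hence $\norm{\bar{\xb}^+_t - \xb^*_t}_{\xb^*_t} \leq \lambda_t^2(3-2\lambda_t)/(1-\lambda_t)^4$. The elementary identity $(1-\lambda_t)^4 - (1-4\lambda_t+2\lambda_t^2) = \lambda_t^2(2-\lambda_t)^2 \geq 0$, together with positivity of $1-4\lambda_t+2\lambda_t^2$ exactly on $[0,1-\sqrt{2}/2)$, loosens the denominator to $1-4\lambda_t+2\lambda_t^2$, yielding \eqref{eq:FPNM_estimate}.

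Monotonicity of the right-hand side in $(\lambda_t,\delta)$ is then elementary: $\delta/(1-\lambda_t)$ is nondecreasing in both, and in the Newton term the numerator $\lambda_t^2(3-2\lambda_t)$ has derivative $6\lambda_t(1-\lambda_t)\geq 0$ while the denominator $1-4\lambda_t+2\lambda_t^2$ has derivative $4(\lambda_t-1)<0$ on the admissible interval, making the ratio nondecreasing. The main obstacle I anticipate is the bookkeeping in the splitting of $\eb$: anchoring at $\xb^*_t$ (rather than at $\xb_t$) is what lets the two standard self-concordance bounds combine into the clean factor $3-2\lambda_t$; an analysis anchored at $\xb_t$ pays an extra $(1-\lambda_t)^{-1}$ in dual-norm conversion and does not reach the stated form even after the final loosening.
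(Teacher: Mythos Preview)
Your proof is correct, and in fact takes a different (and in one respect sharper) route than the paper's.

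The paper proceeds via the proximal operator $P^g_{\xb^*_t}$ and its nonexpansiveness \eqref{eq:nonexapansiveness}: it rewrites $\bar{\xb}^+_t = P^g_{\xb^*_t}(S_{\xb^*_t}(\xb_t) + \bar{\mathbf r}_t;t)$ with $\bar{\mathbf r}_t := (\nabla^2 f(\xb^*_t)-\nabla^2 f(\xb_t))(\bar{\xb}^+_t-\xb_t)$, compares with the fixed point \eqref{eq:fixed_point_xstar}, and bounds $\norm{\bar{\mathbf r}_t}^*_{\xb^*_t}$ in terms of $\norm{\bar{\xb}^+_t-\xb_t}_{\xb^*_t}\le \bar\lambda_t^+ +\lambda_t$. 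This produces a \emph{self-referential} inequality in $\bar\lambda_t^+$ whose resolution gives the denominator $1-4\lambda_t+2\lambda_t^2$ directly. You instead bypass the prox machinery, use monotonicity of $\partial g$ to get $\norm{\ub}_{\xb_t}^2 \le \ub^T\eb$, and obtain the clean estimate $\bar\lambda_t^+ \le \lambda_t^2(3-2\lambda_t)/(1-\lambda_t)^4$, which is actually \emph{tighter} than the paper's bound (since $(1-\lambda_t)^4 \ge 1-4\lambda_t+2\lambda_t^2$ by your identity). You then loosen it to match the stated form. Your approach is more elementary and self-contained; the paper's approach highlights the fixed-point/prox structure that is reused elsewhere in their framework but pays a small price in the constant.
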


To illustrate the contraction properties of $\lambda_t$, we assume that the accuracy $\delta$ can be chosen such that $\delta := \xi \lambda_t$ for a given $\xi \in (0, 1)$. Furthermore, let us define $\varphi(\lambda, \xi) := \frac{\xi}{1 - \lambda} + \frac{3\lambda - 2\lambda^2}{1 - 4\lambda + 2\lambda^2}$ on $[0, 1 - \frac{\sqrt{2}}{2})$. Then, \eqref{eq:FPNM_estimate} can be rewritten as
\begin{equation}{\label{eq:contraction}}
\lambda_t^+ \leq \varphi(\lambda_t, \xi)\lambda_t.
\end{equation}
From \eqref{eq:contraction}, we observe that, if $\varphi(\lambda,\xi) \leq \omega < 1$ for $\lambda\in [0, 1-\sqrt{2}/2]$ and $\xi\in (0,1)$, the sequence of distances $\set{\Vert\xb^+_t - \xb^{*}_t\Vert_{\xb^{*}_t}}_{+}$ by solving \eqref{eq:barrier_cvx_subprob1} becomes contractive, i.e., it ensures the convergence of the proximal-Newton scheme \eqref{eq:barrier_cvx_subprob1}. To this end, we need to find a range of $\lambda_t$ values, (say $\Lambda$), such that $\varphi < 1$. Varying $\xi$, we can choose this range $\Lambda$: Since $\varphi$ is non-decreasing, the larger $\xi$ is, the smaller the range of $\Lambda$ becomes. This observation is illustrated in Figure \ref{fig:phi_func}. 
\begin{figure}[!ht]
\begin{center}
\centerline{\includegraphics[width = 8cm, height = 5cm]{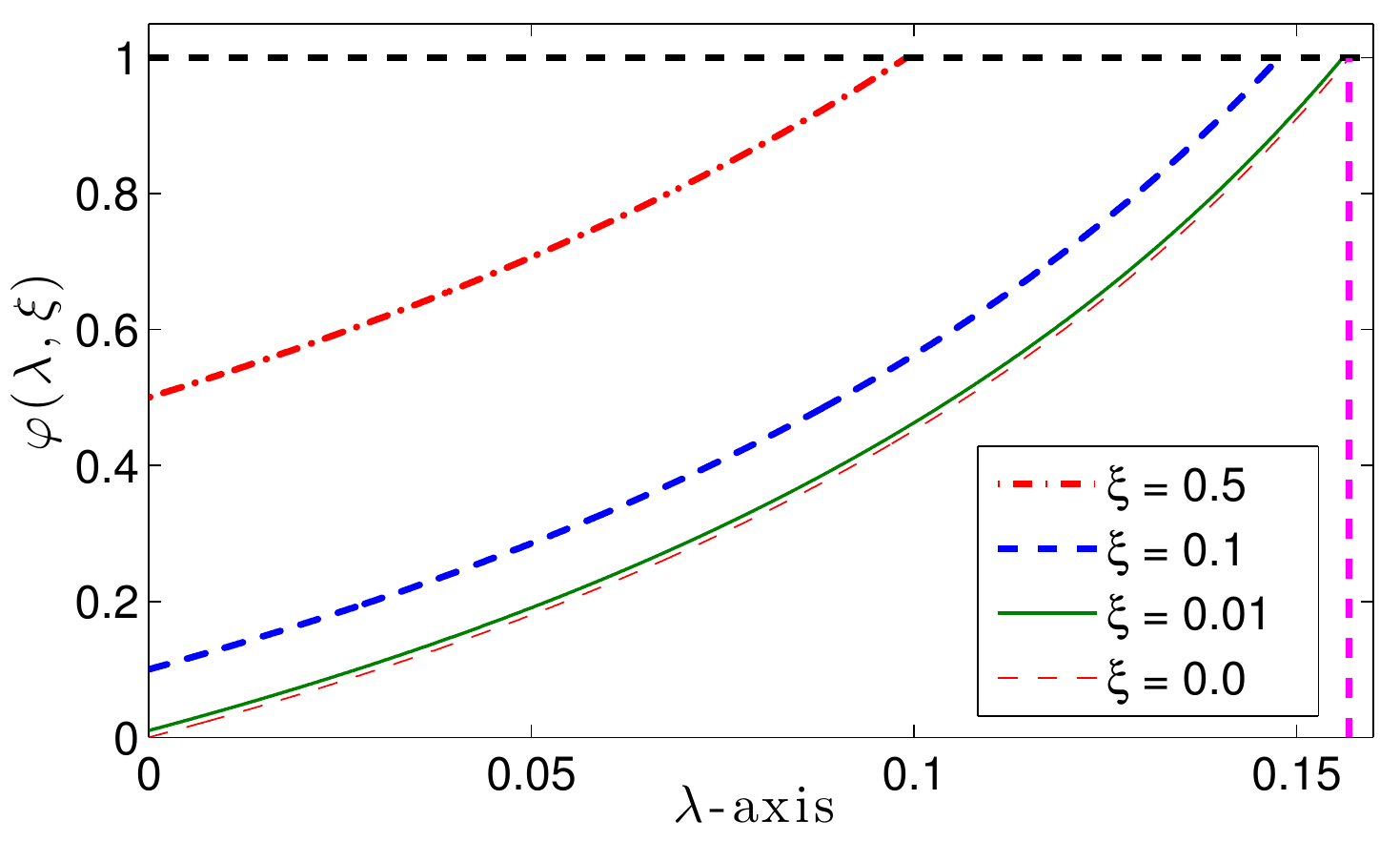}}
\vskip-0.4cm
\caption{The behavior of the contraction factor function $\varphi(\lambda, \xi)$ at different values of $\xi$.}\label{fig:phi_func}
\end{center}
\vskip -0.1in
\end{figure} 
For $\xi \in [0, 0.5]$, the interval where $\varphi < 1$ varies from $[0, 0.098]$ to $[0, 0.154]$. We observed also that, for $\xi = 0.001$, the value of $\varphi$ is very close to the case of $\xi = 0$. In practice, this suggests that if we set the accuracy $\delta < 10^{-3}$, the inexact scheme performs closely to the ideal case (i.e., $\delta = 0$).

Theoretically, if we assume that the subproblem \eqref{eq:barrier_cvx_subprob1} is solved exactly, then the estimate \eqref{eq:FPNM_estimate} reduces to $\bar{\lambda}_t^+ \leq \left(\frac{3-2\bar{\lambda}_t}{1 - 4\bar{\lambda}_t + 2\bar{\lambda}_t^2}\right)\bar{\lambda}_t^2$, where $\bar{\lambda}_t := \norm{\bar{\xb}_t - \xb^{*}_{t}}_{\xb^{*}_{t}}$ and $\bar{\lambda}_t^+ := \norm{\bar{\xb}_t^+ - \xb^{*}_{t}}_{\xb^{*}_{t}}$. The algorithms and the convergence theory corresponding to this case were studied in \cite{TranDinh2013c}.

Now, within this section, we define $\xb^j_t := \xb_t$, $\xb^{j+1}_t := \xb^{+}_t$, $\lambda_t^j :=\lambda_t$, $\lambda_t^{j+1} := \lambda_t^{+}$ and $\delta_j := \delta$, where the index $j$ indicates the iteration counter for fixed $t$ starting from $j := 0$.
An important consequence of Theorem \ref{th:quad_converg_FPNM} is the following corollary.

% Corollary
\begin{corollary}\label{co:quad_convergence}
For a fixed $t > 0$ and a given constant $c > 0$, let $\{\xb_t^j\}_{j\geq 0}$ be a sequence of $\delta_j$-solutions, generated by solving \eqref{eq:barrier_cvx_subprob1} approximately.
\begin{itemize}
\item[$(a)$] If we choose $\delta_j$ and $\xb^0_t$ such that $\delta_j \leq 0.15\lambda^j_t$ for $j\geq 0$ and $\lambda_t^0 \leq 0.1427$, then the sequence of approximate solutions $\{\xb^j_t\}_{j\geq 0}$ in \eqref{eq:barrier_cvx_subprob1} converges to $\xb^{*}_t$ at a linear rate.

\item[$(b)$] If we choose $\delta_j$ and $\xb^0_t$ such that $\delta_j \leq c \cdot (\lambda_t^j)^2$ for $j\geq 0$ and $$\lambda_t^0 \in \left[0, \min\set{0.15,  (1.177c + 6.068)^{-1}}\right],$$ then the sequence of approximate  solutions  $\{\xb^j_t\}_{j\geq 0}$  in \eqref{eq:barrier_cvx_subprob1} converges to $\xb^{*}_t$ at a quadratic rate.
\end{itemize}
\end{corollary}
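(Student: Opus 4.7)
The plan is to apply Theorem \ref{th:quad_converg_FPNM} inductively in both parts, using the factored rewriting \eqref{eq:contraction}. The strategy is the same in each case: substitute the assumed form of $\delta_j$ into the bound \eqref{eq:FPNM_estimate}, factor to expose either a contraction of the form $\lambda_t^{j+1} \le h(\lambda_t^j)\,\lambda_t^j$ with $h<1$, or a quadratic inequality $\lambda_t^{j+1} \le M(\lambda_t^j)^2$, and then use monotonicity of the coefficient together with the initialization hypothesis to close an induction that keeps $\lambda_t^j$ inside a region where the theorem remains applicable.

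For part (a), substituting $\delta_j = 0.15\,\lambda_t^j$ into \eqref{eq:FPNM_estimate} yields exactly $\lambda_t^{j+1} \le \varphi(\lambda_t^j, 0.15)\,\lambda_t^j$. A direct numerical evaluation gives $\omega := \varphi(0.1427, 0.15) < 1$. Since the contraction factor $\varphi(\cdot,\xi)$ is non-decreasing on $[0, 1-\tfrac{\sqrt{2}}{2})$ (as noted after Theorem \ref{th:quad_converg_FPNM}), the hypothesis $\lambda_t^0 \le 0.1427$ gives $\lambda_t^1 \le \omega\,\lambda_t^0 \le \lambda_t^0 \le 0.1427$. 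A straightforward induction then produces $\lambda_t^j \le \omega^j\,\lambda_t^0$, which is the asserted linear rate.

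For part (b), substituting $\delta_j = c(\lambda_t^j)^2$ into \eqref{eq:FPNM_estimate} and factoring out $(\lambda_t^j)^2$ yields
\begin{equation*}
\lambda_t^{j+1} \le K(\lambda_t^j, c)\,(\lambda_t^j)^2, \qquad K(\lambda, c) := \frac{c}{1-\lambda} + \frac{3 - 2\lambda}{1 - 4\lambda + 2\lambda^2}.
\end{equation*}
The same monotonicity reasoning shows $K(\cdot, c)$ is non-decreasing on $[0, 1-\tfrac{\sqrt{2}}{2})$, and term-by-term evaluation at $\lambda = 0.15$ gives $K(0.15, c) = \tfrac{c}{0.85} + \tfrac{2.7}{0.445} \le 1.177\,c + 6.068 =: M$. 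The assumption $\lambda_t^0 \le \min\{0.15, M^{-1}\}$ ensures $M\lambda_t^0 \le 1$, hence $\lambda_t^1 \le M(\lambda_t^0)^2 \le \lambda_t^0 \le 0.15$. Inductively $\lambda_t^j \le 0.15$ for all $j$, so the recursion $\lambda_t^{j+1} \le M(\lambda_t^j)^2$ persists. Multiplying through by $M$ gives $M\lambda_t^{j+1} \le (M\lambda_t^j)^2$, which iterates to $M\lambda_t^j \le (M\lambda_t^0)^{2^j}$, the desired quadratic rate.

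The main obstacles are the numerical verifications underlying the sharp constants: confirming $\varphi(0.1427, 0.15) < 1$ in (a), and verifying that the coefficients $1.177$ and $6.068$ in (b) come from evaluating $K(0.15, c)$. A secondary technicality is monotonicity of $K(\cdot, c)$ and $\varphi(\cdot,\xi)$ on $[0, 1-\tfrac{\sqrt{2}}{2})$, which follows by differentiating each rational summand; the common denominator $1 - 4\lambda + 2\lambda^2$ is positive on this interval since its smaller root is exactly $1 - \tfrac{\sqrt{2}}{2}$, and a short algebraic check shows both derivatives have strictly positive numerators. Once these ingredients are in place, the two inductions above close without additional work.
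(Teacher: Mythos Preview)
Your proposal is correct and follows essentially the same approach as the paper's own proof: in part (a) you substitute $\delta_j = 0.15\lambda_t^j$ into \eqref{eq:FPNM_estimate}, use the monotonicity of $\varphi(\cdot,0.15)$ on $[0,0.1427]$ together with the numerical check $\varphi(0.1427,0.15)<1$ to close the induction, and in part (b) you factor out $(\lambda_t^j)^2$, bound the resulting coefficient on $[0,0.15]$ by $1.177c+6.068$, and use $\lambda_t^0 \le M^{-1}$ to propagate the quadratic recursion. The only cosmetic difference is that you spell out the induction and the iteration $M\lambda_t^{j+1}\le (M\lambda_t^j)^2$ slightly more explicitly than the paper does.
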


% Proof of the corollary.
\begin{proof}
$(a)$.~For $\delta_j := 0.15\lambda^j_t$, we observe that $\varphi(\lambda, 0.15)$ is increasing for $\lambda \in \Lambda := [0, 0.1427]$ and $\varphi(\lambda, 0.15) < 1$ for all $\lambda \in \Lambda$. Therefore, it follows from \eqref{eq:FPNM_estimate} that $\lambda_t^{j+1} \leq \max_{\lambda \in \Lambda}\left\{ \varphi(\lambda, 0.15)\right\}\cdot\lambda_t^j$ for $j\geq 0$, which implies the sequence $\set{\lambda_t^j}$ converges to zero at a linear rate. Since $\lambda_t^j := \Vert\xb^j_t - \xb^{*}_t\Vert_{\xb^{*}_t}$, the sequence $\{\xb^j_t\}_{j\geq 0}$ also converges to $\xb^{*}_t$ at a linear rate (in the weighted norm).

$(b)$. ~For $\lambda_t^j \in [0, 0.15]$, we can see that the weight factor of the second term in the right hand side of \eqref{eq:FPNM_estimate}, i.e., $\frac{3 - 2\lambda_t^j}{1 - 4\lambda^j_t + 2(\lambda_t^j)^2}$, is increasing and moreover, $\frac{3 - 2\lambda_t^j}{1 - 4\lambda_t^j + 2(\lambda^j_t)^2} \leq 6.068$.  Thus, for $\delta_j \leq c \cdot (\lambda_t^j)^2$, we have
\begin{align*}
\lambda_t^{j+1} \leq \left( \frac{c}{1-\lambda_t^j} + 6.068\right)(\lambda_t^j)^2 \leq \left( 1.177c  + 6.068\right)(\lambda_t^j)^2.
\end{align*}
From this inequality, we can easily check that, if $\lambda_t^0 \leq \min\set{0.15, (1.177c + 6.068)^{-1}}$ then $\lambda_t^{j+1}\leq C \cdot (\lambda_t^j)^2$ for $C := 1.177c  + 6.068$. This estimate shows that  the sequence $\set{\lambda_t^j}_{j\geq 0}$ converges to zero at a quadratic rate. Consequently, the sequence $\{\xb^j_t\}_{j\geq 0}$ also converges to $\xb^{*}_t$ at a quadratic rate (in the weighted norm).
\end{proof}
% End of the proof.

%%%%%%%%%%%%%%%%%%%%%%%%%%%%%%%%%%%%%%%%%%%%%%%%%%%%%%%%%%%%%%%%%%%%%%%%%
%% 4. The proximal Newton iterations and path-following scheme.
%%%%%%%%%%%%%%%%%%%%%%%%%%%%%%%%%%%%%%%%%%%%%%%%%%%%%%%%%%%%%%%%%%%%%%%%%
\section{A proximal path following framework}\label{sec:phase2_analysis}
Our discussion so far focuses on the case of minimizing \eqref{eq:barrier_cvx_subprob1} for a fixed $t > 0$. Nevertheless, in order to solve the initial problem \eqref{eq:constr_cvx_prob}, one requires to trace the sequence of solutions as $t_k \downarrow 0^{+}$. For smooth self-concordant barrier function minimization problems, Nesterov and Nemirovskii in \cite{Nesterov2004,Nesterov1994}  presented a path following strategy where  a {\it single Newton step} per iteration is used, for each well-chosen penalty parameter $t_k$. Here, we adopt a similar strategy to handle composite self-concordant barrier problems of the form \eqref{eq:cvx_prob} with a possibly nonsmooth convex function $g$, \emph{mutatis mutandis}. 

Our contribution lies at the adaptive selection of $t$: given an anchor point $\xb_{t_k}$ obtained using $t_k > 0$, we compute an approximate solution $\xb_{t_{k+1}}$ as:
\begin{small}
\begin{align}\label{eq:barrier_cvx_subprob2}
\xb_{t_{k \!+\! 1}}  \!:\approx\!   \argmin_{\xb \in \dom{F}}\Big\{ &\nabla f(\xb_{t_k} )^T (\xb  \!-\!  \xb_{t_k} ) \!+\!  \frac{1}{2}(\xb \!-\! \xb_{t_k} )^T \nabla^2{f}(\xb_{t_k} )(\xb  \!-\!  \xb_{t_k} ) + t_{k +  1}^{-1}g(\xb)\Big\},
\end{align}
\end{small}
where $t_{k+1}$ is adaptively updated per iteration, based on $t_k$;  ``$:\approx$'' is defined as in Definition \ref{de:inexact_sol}; and the index $k$ is to distinguish it from the index $j$ for fixed $t$. 
In stark contrast, classical path-following (homotopy or continuation) methods \cite{Fiacco1983,Guddat1990} usually discretize the parameter $t$ \emph{a priori} and then solve \eqref{eq:cvx_prob} over this grid.

Our proximal path following scheme goes through the following motions: Starting from an initial value $ t  = t_0$ and an anchor point $\xb^0$, we solve \eqref{eq:barrier_cvx_subprob2} to obtain an approximate solution $\xb_{t_0}$ to $\bar{\xb}_{t_0}$, i.e., the exact solution of \eqref{eq:barrier_cvx_subprob1} at $t = t_0$. The initial selection of $t_0$ is generally problem dependent. Section \ref{subsec:initial_point} describes a procedure on how to compute a good starting point $\xb_{t_0}$. Then, at the $k$-th iteration and given $\xb_{t_k}$, the scheme uses $t_k$ to compute $t_{k+1}$ parameter and performs a \textit{single} proximal-Newton (PN) iteration (i.e., solving \eqref{eq:barrier_cvx_subprob1} once for $t = t_{k+1}$) to approximately compute $\xb_{t_{k+1}}$. Moreover, for each iteration, we provably show the proximity of $\xb_{t_k}$ to $\xb^{*}_{t_k}$. 
This strategy is illustrated in Figure \ref{fig:path_following}.

%+ Picture of the proof.
\begin{figure}[ht]
\vskip-0.3cm
\hskip-1.5cm
\setlength{\unitlength}{1.1mm}
\begin{picture}(70, 40)
  \linethickness{0.075mm}
  \put(20, 0){\vector(0, 1){36}}
  \put(21,35){\color{blue}$\xb^{*}_t$}
  \put(20, 0){\vector(1, 0){50}} 
  \put(68, 2){$t$}
  {\color{blue}
  \linethickness{0.3mm}
  \qbezier(25, 5)(35, 27)(70, 28)
  }
  \linethickness{0.075mm}
  \put(25,12.2){\line(2,3){5}}
  \multiput(25,0)(0,1){13}{\circle*{0.01}}
  \multiput(30,0)(0,1){20}{\circle*{0.01}}
  \put(45,27.5){\line(6,1){15}}
  \multiput(46,0)(0,1){28}{\circle*{0.01}}
  \multiput(61,0)(0,1){31}{\circle*{0.01}}
  \put(25,12){\circle*{1}}
  {\color{red}\put(25,5){\circle*{1}}}
  \put(30,20){\circle*{1}}
  {\color{red}\put(30,13){\circle*{1}}}
  {\color{red}\put(45,23.7){\circle*{1}}}
  \put(45,27.5){\circle*{1}}
  \put(60,30){\circle*{1}}
  {\color{red}\put(60,27.5){\circle*{1}}}
  \multiput(30,20)(1,0.5){16}{\circle*{0.1}} 
  \put(24,-3){{\scriptsize$t_0$}}
  \put(29,-3){{\scriptsize$t_1$}}
  \put(44,-3){{\scriptsize$t_k$}}
  \put(59,-3){{\scriptsize$t_{k+1}$}}
  \put(20.5, 14){$\xb_{t_0}$}
  \put(19.5, 2){$\color{blue}\xb^{*}_{t_0}$}
  \put(44, 29.5){$\xb_{t_k}$}
  \put(45.3, 20.5){\color{blue}$\xb^{*}_{t_k}$}
  \put(59, 31.5){$\xb_{t_{k+1}}$}
  \put(60.5, 24.3){$\color{blue}\xb^{*}_{t_{k+1}}$}
  \put(18.5,-3){\scriptsize$0$}
  \put(42.0,25.5){\color{red}\makebox(0,0){{\tiny[1]}$\left\{\right.$}}
  \put(62.3,28.8){\color{red}\makebox(0,0){$\left\}\right.${\tiny[2]}}}
  \put(45,0){\color{red}\tiny$\overbrace{\rule{16.3mm}{0cm}}$}
  \put(51.5,2.2){\color{red}\tiny[4]}
  \put(58,25.3){\color{red}\makebox(0,0){{\tiny[3]}$\left\{\right.$}}
  {\color{red}\put(59.7, 23){\circle*{1}}}
  % These are the lables.
  \put(76,22){\scriptsize{\color{red}[1]}~:~$\lambda_{t_k}^+ = \Vert \xb_{t_k}-{\color{blue}\xb^{*}_{t_k}}\Vert_{\xb^{*}_{t_k}}$}  
  \put(76,18){\scriptsize{\color{red}[2]}~:~$\lambda_{t_{k+1}}^+ = \Vert \xb_{t_{k+1}}-{\color{blue}\xb^{*}_{t_{k+1}}}\Vert_{\xb^{*}_{t_{k+1}}}$}
  \put(76,14){\scriptsize{\color{red}[3]}~:~$\color{blue}\Delta_k = \Vert \xb^{*}_{t_{k+1}} - \xb^{*}_{t_{k}}\Vert_{\xb^{*}_{t_k}}$}
  \put(76,10){\scriptsize{\color{red}[4]}~:~$d_k := \abs{t_{k+1} - t_k}$}
  \put(74,6){\scriptsize{\color{red}}~~}\put(76,6){{\color{blue}\linethickness{0.3mm}\line(1,0){5}}~~\textrm{\scriptsize{The true solution trajectory  $\xb^{*}_{t_k}$}}}
  \put(74,2){\scriptsize{\color{red}}~~}\put(76,2){{\color{black}\linethickness{0.3mm}\line(1,0){5}}~~\textrm{\scriptsize{Approximate solution sequence $\{\xb_{t_k}\}$}}}
  % Put a text.
 \put(30,35){\color{black}\scriptsize{\textbf{one} inexact PN iteration}}
\put(45, 34){\color{red}\vector(0, -1){3}} 
\end{picture}
\vskip 0.2cm
\caption{The approximate sequence $\{x_{t_k}\}_{k\geq 0}$ along the solution trajectory $x^{*}_{t_k}$.}\label{fig:path_following}   
\end{figure}  

%%%%%%%%%%%%%%%%%%%%%%%%%%%%%%%%%%%%%%%%%%%%%%%%%%%%%%%%%%%%%%%%%%
%%% 3.3. The path-following scheme.
%%%%%%%%%%%%%%%%%%%%%%%%%%%%%%%%%%%%%%%%%%%%%%%%%%%%%%%%%%%%%%%%%%%%%%%
\subsection{Quadratic convergence region}
Based on \eqref{eq:prox_Newton_decrement}, we define $\lambda_{t_{k+1}} := \Vert \xb_{t_{k}} - \xb^{*}_{t_{k+1}}\Vert_{\xb^{*}_{t_{k+1}}}$ and $\lambda_{t_{k+1}}^{+} := \Vert\xb_{t_{k+1}} - \xb^{*}_{t_{k+1}}\Vert_{\xb^{*}_{t_{k+1}}}$. 
Given these definitions, for $t \equiv t_{k+1}$ and given the analysis in Section \ref{sec:subsub}, \eqref{eq:FPNM_estimate} becomes
\begin{equation}\label{eq:main_estimate1}
\lambda_{t_{k+1}}^{+} \leq \frac{\delta_k}{1 - \lambda_{t_{k+1}}} + \left(\frac{3 - 2\lambda_{t_{k+1}}}{1 - 4\lambda_{t_{k+1}} + 2\lambda_{t_{k+1}}^2}\right)\lambda_{t_{k+1}}^2,
\end{equation}
provided that $0 \leq \lambda_{t_{k+1}} < 1 - \sqrt{2}/2$. 
The index $k+1$ shows that we first update the parameter $t$ from $t_k$ to $t_{k+1}$ and then perform one PN step by solving \eqref{eq:barrier_cvx_subprob1} up to a given accuracy $\delta_k \geq 0$.

Let us define the weighted distance between two solutions $\xb^{*}_{t_{k+1}}$ and $\xb^{*}_{t_k}$ w.r.t. two different values $t_{k+1}$ and $t_k$ of $t$ as
\begin{equation}\label{eq:Delta}
\Delta_k := \Vert \xb^{*}_{t_{k+1}} - \xb^{*}_{t_k}\Vert_{\xb^{*}_{t_{k+1}}}.
\end{equation} 
The following theorem shows that, for a range of values for $\Delta_k$ and $\delta_k$, if $\lambda_{t_k}^+ \leq \beta$, then at the $(k+1)$-th iteration, we maintain the property $\lambda_{t_{k+1}}^+ \leq \beta$ for a given $\beta > 0$. The proof can be found in the appendix.

%%% Theorem 3.2.
\begin{theorem}\label{th:main_statement}
Let $\beta \in (0, 0.15]$ be fixed.
Assume that $\delta_k$ and $\Delta_k$ satisfy $\delta_k \leq 0.075\beta$ and $\Delta_k \leq \frac{\sqrt{\beta} - 2.581\beta}{2.581 + \sqrt{\beta}}$. Then, if $\lambda_{t_{k}}^+ \leq \beta$, then our scheme guarantees that $\lambda_{t_{k+1}} \leq \frac{1}{2.581}\sqrt{\beta}$ and  $\lambda_{t_{k+1}}^+ \leq\beta$. 
\end{theorem}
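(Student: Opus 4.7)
The plan is to run the argument in two stages. Stage~1 bounds the pre-step distance $\lambda_{t_{k+1}}$ in terms of the previous error $\lambda_{t_k}^+$ and the path shift $\Delta_k$, and Stage~2 feeds this into the inexact proximal-Newton contraction of Theorem~\ref{th:quad_converg_FPNM}. The constants $2.581$ and $0.075$ in the hypotheses are engineered so the two-stage chain of bounds saturates exactly at $\beta = 0.15$, which is the numerical pinch-point of the proof.

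\textbf{Stage 1.} By the triangle inequality with the norm taken at $\xb^{*}_{t_{k+1}}$,
\begin{align*}
\lambda_{t_{k+1}} \leq \|\xb_{t_k} - \xb^{*}_{t_k}\|_{\xb^{*}_{t_{k+1}}} + \Delta_k.
\end{align*}
To relate the first term to $\lambda_{t_k}^+ = \|\xb_{t_k} - \xb^{*}_{t_k}\|_{\xb^{*}_{t_k}}$, I invoke the Dikin-type inequality for self-concordant $f$: because $\Delta_k = \|\xb^{*}_{t_k} - \xb^{*}_{t_{k+1}}\|_{\xb^{*}_{t_{k+1}}} < 1$, one has $\nabla^2 f(\xb^{*}_{t_{k+1}}) \preceq (1-\Delta_k)^{-2}\,\nabla^2 f(\xb^{*}_{t_k})$, and consequently $\|\xb_{t_k} - \xb^{*}_{t_k}\|_{\xb^{*}_{t_{k+1}}} \leq (1-\Delta_k)^{-1}\lambda_{t_k}^+ \leq \beta/(1-\Delta_k)$. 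This yields $\lambda_{t_{k+1}} \leq \beta/(1-\Delta_k) + \Delta_k$.

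Setting $r := \sqrt{\beta}/2.581$, the hypothesis on $\Delta_k$ rewrites as $\Delta_k \leq (r-\beta)/(1+r)$. Since $\beta/(1-\cdot) + \cdot$ is monotone increasing in $\Delta_k$, it suffices to evaluate at this upper endpoint, where $1-\Delta_k = (1+\beta)/(1+r)$. The inequality to verify, $\beta(1+r)/(1+\beta) + (r-\beta)/(1+r) \leq r$, reduces after clearing denominators to the trivial identity $(r-\beta)^2 \geq 0$. Hence $\lambda_{t_{k+1}} \leq r = \sqrt{\beta}/2.581$, which is the first conclusion of the theorem.

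\textbf{Stage 2.} Because $\beta \leq 0.15$, we have $r \leq 0.15 < 1 - \sqrt{2}/2$, so Theorem~\ref{th:quad_converg_FPNM} applies at $t = t_{k+1}$. Exploiting monotonicity of the bound \eqref{eq:FPNM_estimate} in both $\lambda_{t_{k+1}}$ and $\delta_k$, together with $\delta_k \leq 0.075\beta$ and $r^2 = \beta/(2.581)^2$, I obtain $\lambda_{t_{k+1}}^+ \leq \beta \cdot \Phi(r)$ with
\begin{align*}
\Phi(r) := \frac{0.075}{1-r} + \frac{3 - 2r}{(2.581)^2\,(1 - 4r + 2r^2)}.
\end{align*}
Both summands are increasing on $[0, 1 - \sqrt{2}/2)$, so $\Phi$ is increasing, and the remaining task is the numerical check at the worst point $r = 0.15$: the two summands evaluate to roughly $0.088$ and $0.911$, giving $\Phi(0.15) \approx 0.999 \leq 1$, which delivers $\lambda_{t_{k+1}}^+ \leq \beta$. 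This last verification is the only real obstacle: the constants $2.581$ and $0.075$ are chosen so there is essentially no slack at the boundary $\beta = 0.15$, so this step has to be handled with care rather than by crude estimation.
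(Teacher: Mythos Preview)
Your proof is correct and follows essentially the same two-stage argument as the paper: bound $\lambda_{t_{k+1}}$ in terms of $\lambda_{t_k}^+$ and $\Delta_k$, then feed the result into the contraction estimate \eqref{eq:FPNM_estimate} and verify numerically at the endpoint $\beta = 0.15$. The only cosmetic difference is in Stage~1, where you derive $\lambda_{t_{k+1}} \le \lambda_{t_k}^+/(1-\Delta_k) + \Delta_k$ directly via the triangle inequality and the Dikin comparison, while the paper cites an external lemma for the (slightly weaker) bound $(\lambda_{t_k}^+ + \Delta_k)/(1-\Delta_k)$; both bounds coincide at the prescribed upper endpoint for $\Delta_k$ and yield the same value $\sqrt{\beta}/2.581$, so the downstream numerics are identical.
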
 

Let us define $\mathcal{Q}^{t_k}_{\beta} := \set{\xb^k \in\dom{F} ~|~ \lambda_{t_k}^+ \leq \beta}$. We refer to $\mathcal{Q}^{t_k}_{\beta}$ as the \textit{quadratic convergence region} of the inexact proximal-Newton iterations \eqref{eq:barrier_cvx_subprob1} for solving \eqref{eq:cvx_prob}. 
For fixed $t_k > 0$, from Corollary \ref{co:quad_convergence}, we can see that if the starting point $\xb_{t_0}$ is chosen such that $\xb_{t_0}\in \mathcal{Q}^{t_k}_{\beta}$, then the whole sequence $\{\xb_{t_k}\}_{k\geq 0}$ generated belongs to $\mathcal{Q}^{t_k}_{\beta}$ and converges to $\xb^{*}_{t_k}$, the solution of \eqref{eq:cvx_prob}, at a quadratic rate.
In plain words, Theorem \ref{th:main_statement}  indicates that if the $\delta_k$-solution $\xb_{t_k}$ is in the quadratic convergence region $\mathcal{Q}^{t_k}_{\beta}$ at $\xb^{*}_{t_k}$ then, we can configure the proposed scheme such that the next $\delta_{k+1}$-solution $\xb_{t_{k+1}}$ remains in the quadratic convergence region $\mathcal{Q}^{t_{k+1}}_{\beta}$ at $\xb^{*}_{t_{k+1}}$.  

% 4.2. An adaptive update rule for t.
\subsection{An adaptive update rule for $t$}
Next, we show how we can  update the penalty parameter $t$ in our path-following scheme to ensure the condition on $\Delta_k$ in Theorem \ref{th:main_statement}.
The penalty parameter $t$ is updated as
\begin{equation}\label{eq:update_t}
t_{k+1} := t_k + d_k, 
\end{equation}
where $d_k$ is a decrement or an increment over the current penalty parameter $t_{k}$.
The following lemma shows how we can choose $d_k$; the proof is provided in the Appendix.
%%% Lemma 3.3.
\begin{lemma}\label{le:rel_Delta_lambda}
Let $\Delta_k$ be defined by \eqref{eq:Delta} such that $\Delta_k < 1$ and the penalty parameter for the $(k+1)$-th iteration be updated by \eqref{eq:update_t}. 
Then, we have 
\begin{equation}\label{eq:Delta_est}
\frac{\Delta_k}{1 + \Delta_k} \leq \frac{\abs{d_k}}{t_k}\Vert\nabla^2f(\xb^{*}_{t_{k+1}})\Vert_{\xb^{*}_{t_{k+1}}}^{*} \leq \frac{\abs{d_k}}{t_k}\sqrt{\nu}.
\end{equation}
Consequently, if we choose $d_k$ such that $\abs{d_k} \leq \frac{t_k}{\sqrt{\nu}}$, then $\Delta_k \leq \frac{\abs{d_k}\sqrt{\nu}}{t_k - \abs{d_k}\sqrt{\nu}}$.
\end{lemma}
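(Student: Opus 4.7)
The plan is to combine three ingredients: (i) the first-order optimality conditions for $\xb^*_{t_k}$ and $\xb^*_{t_{k+1}}$, (ii) monotonicity of the subdifferential $\partial g$, and (iii) a standard self-concordant lower bound on the gradient-difference bilinear form. The bound \eqref{eq:used} is what will convert the intermediate dual-norm estimate into the $\sqrt{\nu}$ tail of \eqref{eq:Delta_est}.

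First, I would record the optimality condition \eqref{eq:optimality_for_ln_barrier} at each penalty parameter: there exist subgradients $\vb_k \in \partial g(\xb^*_{t_k})$ and $\vb_{k+1} \in \partial g(\xb^*_{t_{k+1}})$ with $\vb_j = -t_j \nabla f(\xb^*_{t_j})$ for $j\in\{k,k+1\}$. Monotonicity of $\partial g$ yields $(\vb_{k+1} - \vb_k)^T(\xb^*_{t_{k+1}} - \xb^*_{t_k}) \geq 0$, which, substituting $\vb_j = -t_j\nabla f(\xb^*_{t_j})$ and using the update rule $t_k = t_{k+1} - d_k$, rearranges to
\begin{equation*}
-d_k \,\nabla f(\xb^*_{t_{k+1}})^T(\xb^*_{t_{k+1}} - \xb^*_{t_k}) \;\geq\; t_k\bigl(\nabla f(\xb^*_{t_{k+1}}) - \nabla f(\xb^*_{t_k})\bigr)^T(\xb^*_{t_{k+1}} - \xb^*_{t_k}).
\end{equation*}

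Next I would invoke the standard self-concordant inequality (cf.\ Nesterov \cite{Nesterov2004}, Theorem 4.1.7) anchored at $\xb^*_{t_{k+1}}$, namely $(\nabla f(y) - \nabla f(x))^T(y - x) \geq \|y - x\|_x^2 / (1 + \|y - x\|_x)$ with $x = \xb^*_{t_{k+1}}$ and $y = \xb^*_{t_k}$, which produces the lower bound $\Delta_k^2/(1+\Delta_k)$ on the right-hand side above. For the left-hand side, the Cauchy--Schwarz inequality in the local norm gives $|\nabla f(\xb^*_{t_{k+1}})^T(\xb^*_{t_{k+1}} - \xb^*_{t_k})| \leq \|\nabla f(\xb^*_{t_{k+1}})\|^*_{\xb^*_{t_{k+1}}}\cdot \Delta_k$. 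Chaining these two estimates and dividing by $\Delta_k t_k$ (assuming $\Delta_k > 0$; otherwise the claim is trivial) yields the first inequality of \eqref{eq:Delta_est}; the second follows immediately from \eqref{eq:used}.

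For the consequence, I would set $\alpha := |d_k|\sqrt{\nu}/t_k$ and solve $\Delta_k/(1+\Delta_k) \leq \alpha$ for $\Delta_k$, which gives $\Delta_k \leq \alpha/(1-\alpha)$ provided $\alpha < 1$; this is exactly the condition $|d_k| \leq t_k/\sqrt{\nu}$ (with strict inequality; the boundary is handled by continuity or by absorbing into the stated bound), and substituting back produces the final estimate $\Delta_k \leq |d_k|\sqrt{\nu}/(t_k - |d_k|\sqrt{\nu})$.

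The main obstacle I anticipate is bookkeeping of signs and of which point anchors the local norm in the self-concordant inequality: the definition of $\Delta_k$ uses the norm at $\xb^*_{t_{k+1}}$, so the self-concordant lower bound must be applied with that anchor to match. Once the anchor and the orientation of the monotonicity inequality are both aligned, the rest is algebraic rearrangement and an application of \eqref{eq:used}.
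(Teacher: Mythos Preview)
Your proposal is correct and follows essentially the same route as the paper's proof: optimality conditions at $t_k$ and $t_{k+1}$, monotonicity of $\partial g$, the self-concordant lower bound \cite[Theorem~4.1.7]{Nesterov2004} anchored at $\xb^{*}_{t_{k+1}}$, Cauchy--Schwarz in the local norm, and finally \eqref{eq:used}. You have also correctly read the middle term of \eqref{eq:Delta_est} as $\Vert\nabla f(\xb^{*}_{t_{k+1}})\Vert_{\xb^{*}_{t_{k+1}}}^{*}$ (the gradient, not the Hessian as printed), which is exactly what the paper's own proof derives.
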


Now, we combine Lemma \ref{le:rel_Delta_lambda} and Theorem \ref{th:main_statement} to establish an update rule for $t_k$. The condition $\Delta_k \leq \frac{\sqrt{\beta} - 2.581\beta}{2.581 + \sqrt{\beta}} =: C(\beta)$ in Theorem  \ref{th:main_statement} holds if we force
\begin{equation*}
\frac{\abs{d_k}\sqrt{\nu}}{t_k - \abs{d_k}\sqrt{\nu}} \leq C(\beta),
\end{equation*}
which leads to $\abs{d_k} \leq \sigma_{\beta}\cdot t_k$, where $ \sigma_{\beta} := \frac{C(\beta)}{(1 + C(\beta))\sqrt{\nu}} \in (0, 1)$.
%Figure \ref{fig:C_func} shows the behavior of $C(\beta)$. %It is easy to check that $C(\beta)$ attains the maximum at $\beta^{*} \approx 0.03495$, where $C(\beta^{*}) \approx 0.034951$; see 
%\begin{figure}[!ht]
%%\vskip-0.1in
%\begin{center}
%\centerline{\includegraphics[width = 8cm, height = 4.5cm]{C_func}}
%\vskip-0.4cm
%\caption{The behavior of the function $C(\beta)$}\label{fig:C_func}
%\end{center}
%\vskip -0.2in
%\end{figure} 
%Let us fix $\beta \in (0, 0.15]$ where, according to Theorem \ref{th:main_statement}, $\Delta_k \leq C(\beta)$ and define . 
Then, based on Lemma \ref{le:rel_Delta_lambda}, we can update $t_k$ as
\begin{equation}\label{eq:update_tk}
t_{k+1} := (1 \pm \sigma_{\beta})t_k,
\end{equation}
i.e., we can either increase $t_{k}$ or decrease $t_{k}$ by a factor $1 \pm \sigma_{\beta}$ at each iteration while preserving the properties of Lemma \ref{le:rel_Delta_lambda}. 
For example, for $\beta = 0.05 $, we have $C(\beta) \approx 0.033715$ and $\sigma_{\beta} \approx \frac{0.033715}{\sqrt{\nu}}$.

%%%***********************************************************************************
%%% 4. Finding a starting point
%%%***********************************************************************************
\subsection{Finding a starting point}\label{subsec:initial_point}
In order to initialize the path-following phase of our algorithm, we need to find a point $\xb_{t_0} \in \dom{F}$ such that $\lambda_{t_0}^+ := \Vert  \xb_{t_0} - \xb^{*}_{t_0}\Vert_{\xb^{*}_{t_0}} \leq \beta$ for given $\beta \in (0, 0.15]$ as indicated in Theorem \ref{th:main_statement}.
To achieve this goal, we apply the \textit{inexact damped} proximal-Newton method:  
Given $t_0 > 0$ and an initial point $\xb^0\in\dom{F}$, we generate a sequence $\{\xb^j_{t_0}\}_{j\geq 0}$, starting from $\xb_{t_0}^0 := \xb^0$, by computing
\begin{equation}\label{eq:damped_PN_iter}
\xb^{j+1}_{t_0} := \xb^j_{t_0} + \alpha_j\db^j_{t_0}, ~\textrm{with}~ \db^j_{t_0} := \sb^j_{t_0} - \xb^j_{t_0}, ~j\geq 0,
\end{equation}
where $\alpha_j\in (0, 1]$ is a given step size which will be defined later, $\db^j_{t_0}$ is a $\delta_0^j$-approximate proximal-Newton search direction, and $\sb^j_{t_0}$ is a trial point obtained by approximately solving the following convex subproblem:
\begin{align}\label{eq:cvx_subprob2}
\sb^{j}_{t_0} \approx \bar{\sb}^{j}_{t_0} := \argmin_{\sb\in\dom{F}} \widehat{F}(\sb; {t_0}, \sb_{t_0}^{j-1}).
\end{align}
Here we use the index $j$ to distinguish it from the index $k$ of the path-following phase in the previous subsections.
Again, we denote with $\bar{\sb}^{j+1}_{t_0}$ the exact solution of \eqref{eq:cvx_subprob2} and the approximation ``$\approx$'' is defined as in Definition \ref{de:inexact_sol} with the accuracy $\delta_0^j\geq 0$. 

It follows from \eqref{eq:approx_sol} that
\begin{equation}\label{eq:inexact_criterion}
\widehat{F}(\bar{\sb}^j_{t_0}; {t_0}, \sb_{t_0}^{j-1}) \leq \widehat{F}(\sb^j_{t_0}; {t_0}, \sb_{t_0}^{j-1})  \leq \widehat{F}(\bar{\sb}^j_{t_0}; {t_0}, \sb_{t_0}^{j-1}) + \frac{(\delta^j_0)^2}{2}.
\end{equation}
Given the inexact proximal-Newton search direction $\db^j_{t_0}$, we define $\zeta_j := \Vert\db^{j}_{t_0} \Vert_{\xb^j_{t_0}}$ the \textit{inexact proximal-Newton} decrement, which is similar to the proximal-Newton decrement $\bar{\zeta}_j := \Vert\bar{\db}^j_{t_0}\Vert_{\xb^j_{t_0}}$ defined in \cite{TranDinh2013c}. 
The following lemma shows how to choose the step size $\alpha_j$; the proof is given in the appendix.
 
%% Lemma 4.3.
\begin{lemma}\label{le:damped_PN_scheme}
Let $\{\xb^j_{t_0}\}_{j\geq 0}$ be  a sequence generated by the inexact damped proximal-Newton scheme \eqref{eq:damped_PN_iter}. 
If we choose the accuracy $\delta_0^j$ such that $\delta_0^j < \zeta_j$ then, with $\alpha_j := \frac{\zeta_j - \delta_0^j}{(1 + \zeta_j - \delta_0^j)\zeta_j} \in [0, 1]$ we have
\begin{equation}\label{eq:damped_PN_scheme}
F(\xb^{j+1}_{t_0}; t_0) \leq  F(\xb^j_{t_0}; t_0) - \omega(\zeta_j - \delta_0^j), ~~\forall j\geq 0.
\end{equation}
Moreover, the above step size $\alpha_j$ is optimal.
\end{lemma}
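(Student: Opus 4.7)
The plan is to derive a one-step descent inequality for $F(\cdot;t_0)$ along the inexact proximal-Newton direction $\db^{j}_{t_0}$, then to optimize it in the step-size parameter $\alpha$.

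\textbf{Step 1 (function decomposition via self-concordance and convexity).} Because $\xb^{j+1}_{t_0} = (1-\alpha_j)\xb^j_{t_0} + \alpha_j \sb^j_{t_0}$, I would first apply the standard self-concordance upper bound (Nesterov--Nemirovskii) to $f$ along the direction $\db^j_{t_0}$, yielding
\begin{equation*}
f(\xb^{j+1}_{t_0}) \leq f(\xb^j_{t_0}) + \alpha_j \nabla f(\xb^j_{t_0})^T \db^j_{t_0} + \omega_{*}(\alpha_j \zeta_j),
\end{equation*}
valid whenever $\alpha_j \zeta_j < 1$. For the nonsmooth part I would use convexity of $g$ on $[\xb^j_{t_0},\sb^j_{t_0}]$ to obtain $g(\xb^{j+1}_{t_0}) \leq (1-\alpha_j)g(\xb^j_{t_0}) + \alpha_j g(\sb^j_{t_0})$. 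Adding $t_0^{-1}$ times the latter to the former and setting $\rho_j := \nabla f(\xb^j_{t_0})^T \db^j_{t_0} + t_0^{-1}\left(g(\sb^j_{t_0}) - g(\xb^j_{t_0})\right)$, I get
\begin{equation*}
F(\xb^{j+1}_{t_0};t_0) - F(\xb^j_{t_0};t_0) \leq \alpha_j \rho_j + \omega_{*}(\alpha_j \zeta_j).
\end{equation*}

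\textbf{Step 2 (controlling $\rho_j$ through the inexactness criterion).} This is the main technical step. Write $\widehat{F}(\sb; t_0, \xb^j_{t_0}) - \widehat{F}(\xb^j_{t_0}; t_0, \xb^j_{t_0}) = \rho_j' + \tfrac{1}{2}\|\sb - \xb^j_{t_0}\|_{\xb^j_{t_0}}^2$ where $\rho_j'$ has the same form as $\rho_j$. Evaluating at $\sb = \sb^j_{t_0}$ yields the identity $\widehat{F}(\sb^j_{t_0};t_0,\xb^j_{t_0}) - \widehat{F}(\xb^j_{t_0};t_0,\xb^j_{t_0}) = \rho_j + \tfrac{1}{2}\zeta_j^2$. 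On the other hand, Definition~\ref{de:inexact_sol} gives $\widehat{F}(\sb^j_{t_0};\cdot) \leq \widehat{F}(\bar{\sb}^j_{t_0};\cdot) + (\delta_0^j)^2/2$, while the analogue of \eqref{eq:inexact_sol2} applied at $\xb^j_{t_0}$ gives $\widehat{F}(\xb^j_{t_0};\cdot) \geq \widehat{F}(\bar{\sb}^j_{t_0};\cdot) + \tfrac{1}{2}\bar{\zeta}_j^2$. Combining, $\rho_j + \tfrac{1}{2}\zeta_j^2 \leq \tfrac{1}{2}(\delta_0^j)^2 - \tfrac{1}{2}\bar{\zeta}_j^2$. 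Using \eqref{eq:computable_criterion2} one obtains $\|\sb^j_{t_0} - \bar{\sb}^j_{t_0}\|_{\xb^j_{t_0}} \leq \delta_0^j$, so triangle inequality gives $\bar{\zeta}_j \geq \zeta_j - \delta_0^j \geq 0$ under the hypothesis $\delta_0^j < \zeta_j$. Substituting $\bar{\zeta}_j^2 \geq (\zeta_j-\delta_0^j)^2$ and simplifying algebraically collapses the four quadratic terms into the clean bound $\rho_j \leq -\zeta_j(\zeta_j - \delta_0^j)$.

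\textbf{Step 3 (optimizing over $\alpha$).} Setting $u := \alpha\zeta_j \in [0,1)$, Steps 1--2 give $F(\xb^{j+1}_{t_0};t_0) - F(\xb^j_{t_0};t_0) \leq -u(\zeta_j - \delta_0^j) + \omega_{*}(u) = -u(1+\zeta_j-\delta_0^j) - \ln(1-u)$. This is strictly convex in $u$, and its unique minimizer solves $1/(1-u) = 1 + \zeta_j - \delta_0^j$, i.e., $u^{\star} = (\zeta_j - \delta_0^j)/(1+\zeta_j - \delta_0^j)$. Translating back via $\alpha_j = u^{\star}/\zeta_j$ recovers exactly the claimed step size, and a quick check ($\alpha_j \geq 0$ from $\delta_0^j < \zeta_j$; $\alpha_j \leq 1$ since $\zeta_j - \delta_0^j \leq \zeta_j + \zeta_j^2 - \zeta_j\delta_0^j$) verifies $\alpha_j \in [0,1]$. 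Plugging $u^\star$ into the expression yields $-(\zeta_j - \delta_0^j) + \ln(1 + \zeta_j - \delta_0^j) = -\omega(\zeta_j - \delta_0^j)$, which is \eqref{eq:damped_PN_scheme}; the optimality of $\alpha_j$ follows from the strict convexity.

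The main obstacle is Step 2: the natural bound $\widehat{F}(\sb^j_{t_0};\cdot) - \widehat{F}(\xb^j_{t_0};\cdot) \leq (\delta_0^j)^2/2$ alone is too weak; the sharpening to $\rho_j \leq -\zeta_j(\zeta_j - \delta_0^j)$ requires using the lower curvature bound at $\xb^j_{t_0}$ to extract $\bar{\zeta}_j^2$ and then exploiting the triangle inequality $\bar{\zeta}_j \geq \zeta_j - \delta_0^j$. Once this bound is established, the remaining calculus is routine.
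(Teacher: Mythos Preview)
Your proof is correct and follows essentially the same approach as the paper: Step~1 (self-concordance upper bound for $f$ plus convexity of $g$) and Step~3 (one-dimensional optimization in $\alpha$) are identical, and your Step~2 arrives at the same key estimate $\rho_j \leq -\zeta_j(\zeta_j-\delta_0^j)$ that the paper establishes. The only difference is organizational: the paper unpacks the $g$-terms directly via the optimality condition \eqref{eq:opt_cond_subprob} at $\bar{\sb}^j_{t_0}$, whereas you work at the level of $\widehat{F}$-differences and invoke the strong-convexity lower bound \eqref{eq:inexact_sol2}; these are equivalent manipulations, and your packaging is arguably a bit cleaner.
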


At each iteration, assume $\delta_0^j := \kappa \zeta_j$ where $\kappa \in (0, 1)$ (see Section \ref{sec:application}).
For a given $\beta \in (0, 0.15]$, from Lemma \ref{le:lower_bound} we deduce that $\lambda_{t_0}^+ := \norm{\xb_{t_0} - \xb^{*}_{t_0}}_{\xb^{*}_{t_0}} \leq \beta$ if $F(\xb_{t_0}; t_0) - F(\xb^{*}_{t_0}; t_0) \leq \omega(\beta)$. 
To achieve such bound, assume that we can estimate an upper bound of the quantity $\gamma_0 \geq F(\xb^0; t_0) - F(\xb^{*}_{t_0}; t_0) \geq 0$.  
By using the estimate \eqref{eq:damped_PN_scheme}, we deduce
\begin{equation}\label{eq:condition1}
\begin{array}{ll}
F(\xb^{j+1}_{t_0}; t_0)  - F(\xb^{*}_{t_0}; t_0) &\leq F(\xb^0_{t_0}; t_0) - F(\xb^{*}_{t_0}; t_0)  - \sum_{l=0}^{j}\omega\left((1-\kappa)\zeta_l\right) \\
& \leq  \gamma_0 -  \sum_{l=0}^{j}\omega\left((1-\kappa)\zeta_l\right).
\end{array}
\end{equation} 
We now define
\begin{equation}\label{eq:damped_step_cond}
\Gamma_{j_{\max}} := \sum_{l=0}^{j_{\max}}\omega\left((1-\kappa)\zeta_l\right) + \omega(\beta).
\end{equation} 
Then, we observe that, if $\Gamma_{j_{\max}} \geq \gamma_0$, then one can guarantee $\lambda_{t_0}^+  \leq \beta$, where $\xb_{t_0} := \xb^{j_{\max}+1}_{t_0}$.
We note that for $j \leq j_{\max}$, we have $\zeta_j \geq \beta$. Therefore, if we choose $\delta_0^j  := \kappa\beta$ for some $\kappa \in (0, 1)$, then the assumption $\delta_0^j < \zeta_j$ is fulfilled.

% 4.4. Our prototype scheme.
\subsection{Our prototype scheme}\label{sec:phase1_analysis}
The proposed algorithm is given in Algorithm \ref{alg:PF_PN_alg} below.
%%%%%%%%%%%%%%
%%% The algorithm.
%%%%%%%%%%%%%%
\begin{algorithm}[!ht]
   \caption{\textsc{\textbf{Inexact path following proximal Newton algorithm}}}\label{alg:PF_PN_alg}
\begin{algorithmic}[1]
   \Statex {\bfseries Input:} Choose $t_0 > 0$, $\beta \in (0, 0.15]$, $\kappa \in (0, 1)$ and $\xb^0\in\dom{F}$. 
   Compute {~~~~~~}an upper bound $\gamma_0 > 0$ for $F(\xb^0; t_0) - F(\xb^{*}_{t_0}; t_0)$.
   \Statex {\bfseries Initialize:} $\Gamma_{-1} := \omega(\beta)$, $C(\beta) := \frac{\sqrt{\beta} - 2.581\beta}{2.581 + \sqrt{\beta}}$, $\sigma_{\beta} := \frac{C(\beta)}{(1 + C(\beta))\sqrt{\nu}}$, $\xb^0_{t_0} := \xb^0$.
   \Statex\rule{0.88\textwidth}{0.1mm}
    \Statex \centerline{\textsc{\textbf{Phase I: Computing an initial point $\xb^0_{t_0}$}}}
    \vskip-0.6cm
   \Statex\rule{0.88\textwidth}{0.1mm}
   \Statex {\bfseries for} $j = 0, \cdots, j_{\max}$
   \State \hspace{0.16cm} Compute $\mathbf{d}^j_{t_0}$ via \eqref{eq:damped_PN_iter} by solving \eqref{eq:cvx_subprob2} approximately up to $\delta_0^j := \kappa\beta$.
   \State \hspace{0.16cm} Compute $\zeta_j := \Vert\db^j_{t_0}\Vert_{\xb^j_{t_0}}$.
   \State \hspace{0.16cm} $\Gamma_j := \Gamma_{j-1} + \omega((1-\kappa)\zeta_j)$.
   \Statex \hspace{0.16cm} {\bfseries if} $\Gamma_j \geq \gamma_0$ {\bfseries then} 
   \State \hspace{0.48cm} $\xb_{t_0} := \xb^j_{t_0}$.
   \State \hspace{0.48cm} \textbf{break} 
   \Statex \hspace{0.16cm} {\bfseries end if}
   \State \hspace{0.16cm} $\xb^{j+1}_{t_0} := \xb^j_{t_0} + \alpha_j\db^j_{t_0}$, where $\alpha_j := (1-\kappa)\left[1 + (1-\kappa)\zeta_j\right]^{-1}$.
   \Statex {\bfseries end for}
    \Statex\rule{0.88\textwidth}{0.1mm}
    \Statex \centerline{\textsc{\textbf{Phase II: Path following iteration}}}
    \vskip-0.6cm
   \Statex\rule{0.88\textwidth}{0.1mm}
   \State Use $\xb_{t_0}$, computed at \textbf{Phase I}.
   \Statex {\bfseries for} $k = 0, \dots, k_{\max}$ or {\bfseries while} stopping criterion is not met
   \State \hspace{0.16cm} $t_{k+1} := \left(1 \pm \sigma_{\beta}\right)t_k$.  
   \State \hspace{0.16cm} Given $\xb_{t_k}$, solve \eqref{eq:barrier_cvx_subprob2} approximately up to $\delta_k \leq 0.075\beta$ to obtain $\xb_{t_{k+1}} $.
   \Statex {\bfseries end}
\end{algorithmic}
\end{algorithm}
% End of the algorithm.
The main steps are Step 1 and Step 9, where we need to solve two convex subproblems of the form \eqref{eq:barrier_cvx_subprob1}-\eqref{eq:cvx_subprob2}.
For certain regularizers $g$ such as the $\ell_1$-norm or the indicator of a simple convex set, there exist several efficient algorithms for this kind of optimization problems \cite{Beck2009,Becker2011b,Nesterov2004,Nesterov2007}. 
The update rule for $t_k$ at Step 8 of Phase II is based on the worst-case estimate \eqref{eq:Delta_est}. We can add the condition: \textit{if $t_f \in (t_{k-1}, t_k]$, then set $t_k := t_f$} at Step 8 to ensure that the final value of $t_k$ is $t_k = t_f$.
In practice, we can adaptively update $t_k$ as discussed later in Section \ref{sec:application}.
The stopping criterion of Phase II has not been specified yet, which depends on applications as we will see later in Section \ref{sec:constrained_case}.

\subsection{Convergence analysis}\label{sec:alg_conv}
In this subsection, we provide the full complexity analysis for Phase I and Phase II of Algorithm \ref{alg:PF_PN_alg} separately. Since we consider the case $t \downarrow 0^{+}$, we assume $t_{k+1} = (1-\sigma_{\beta})t_k$ and $t_0 \gg 0^{+}$.
The worst-case complexity estimate of Algorithm \ref{alg:PF_PN_alg} is given in the following theorem.

% Theorem 4.1.
\begin{theorem}\label{th:complexity_estimate}
The number of iterations required in Phase I to find $\xb_{t_0}\in\dom{F}$ such that $\lambda_{t_0}^+ \leq \beta$ is at most
\begin{equation}\label{eq:x0_complexity}
j_{\max} := \left\lfloor \frac{F(\xb^0; t_0) - F(\xb^{*}_{t_0}; t_0)}{\omega((1-\kappa)\beta)} \right\rfloor + 1.
\end{equation}
The number of iterations required in Phase II to reach the approximate solution $\xb_{t_f}$ of $\xb^{*}_{t_f}$, where $t_f$ is a user-defined value, close to $0^{+}$ and $\lambda_{t_f}^+ \leq \beta$, is at most
\begin{equation}\label{eq:path_following_complexity}
k_{\max} :=  \left\lfloor \frac{\ln(t_0/t_f)}{-\ln(1 - \sigma_{\beta})} \right\rfloor + 1,
\end{equation}
where $\sigma_{\beta}$ is given by \eqref{eq:update_tk}.
Consequently, the worst-case analytical complexity of Phase II is $\mathcal{O}\left(\sqrt{\nu}\ln\big(\frac{t_0}{t_f}\big)\right)$.
\end{theorem}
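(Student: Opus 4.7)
The plan is to split the proof cleanly along the two phases and exploit the per-iteration descent already supplied by Lemma \ref{le:damped_PN_scheme} for Phase I and the geometric shrinkage of $t_k$ for Phase II.

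For the Phase I bound \eqref{eq:x0_complexity}, my first move is to invoke Lemma \ref{le:damped_PN_scheme} inductively: each iteration satisfies
\begin{equation*}
F(\xb^{j+1}_{t_0}; t_0) \leq F(\xb^j_{t_0}; t_0) - \omega(\zeta_j - \delta_0^j).
\end{equation*}
Prior to termination, the stopping test $\Gamma_j \geq \gamma_0$ has not fired, which (combined with the choice $\delta_0^j = \kappa\beta$ or equivalently $\delta_0^j = \kappa\zeta_j$ with $\zeta_j \geq \beta$) forces $\zeta_j - \delta_0^j \geq (1-\kappa)\beta$. Monotonicity of $\omega$ then gives a uniform per-step decrease of at least $\omega((1-\kappa)\beta)$. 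Telescoping over $j = 0, \dots, j_{\max}-1$ and using $F(\xb^{j_{\max}}_{t_0}; t_0) \geq F(\xb^{*}_{t_0}; t_0)$ directly produces \eqref{eq:x0_complexity}. The verification that the returned $\xb_{t_0}$ actually lies in the quadratic convergence region follows by combining the definition of $\Gamma_j$ with estimate \eqref{eq:condition1}, which yields $F(\xb_{t_0}; t_0) - F(\xb^{*}_{t_0}; t_0) \leq \omega(\beta)$, and then applying Lemma \ref{le:lower_bound} plus strict monotonicity of $\omega$ to conclude $\lambda_{t_0}^+ \leq \beta$.

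For Phase II, the iteration count \eqref{eq:path_following_complexity} follows immediately from unrolling the update rule \eqref{eq:update_tk}: $t_k = (1-\sigma_\beta)^k t_0$, so the inequality $t_{k_{\max}} \leq t_f$ gives $k_{\max} \geq \ln(t_0/t_f)/(-\ln(1-\sigma_\beta))$. But before that closed-form count is meaningful, one must argue that the approximate-solution invariant is preserved throughout the path. I would do this by induction on $k$: the base case is exactly the output of Phase I; the inductive step applies Theorem \ref{th:main_statement}, whose hypotheses on $\delta_k$ (ensured by the subproblem tolerance $\delta_k \leq 0.075\beta$ at Step 9) and $\Delta_k$ (ensured by Lemma \ref{le:rel_Delta_lambda}, since the update \eqref{eq:update_tk} is precisely engineered so that $|d_k|/t_k \cdot \sqrt{\nu} \leq \sigma_\beta/(1-\sigma_\beta) \cdot \sqrt{\nu}$ translates into $\Delta_k \leq C(\beta)$) propagate the bound $\lambda_{t_{k+1}}^+ \leq \beta$.

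Finally, to derive the analytical $\mathcal{O}(\sqrt{\nu}\ln(t_0/t_f))$ complexity, I would use the elementary inequality $-\ln(1-x) \geq x$ for $x\in[0,1)$ to bound $k_{\max} \leq \ln(t_0/t_f)/\sigma_\beta + 1$, and then substitute $\sigma_\beta = C(\beta)/((1+C(\beta))\sqrt{\nu})$ with $\beta$ treated as an absolute constant. The main obstacle is not any single calculation but rather keeping Phase I and Phase II consistent with each other: one must verify that the exit point of Phase I, $\xb_{t_0}$, already sits in $\mathcal{Q}^{t_0}_\beta$ so that the Phase II induction can even start, and one must confirm that the update coefficient $\sigma_\beta$ arising from the path-following requirement in Theorem \ref{th:main_statement} is the very same quantity appearing in the geometric shrinkage argument — these two facts are exactly what link the two phases into a single complexity estimate.
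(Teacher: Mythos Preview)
Your proposal is correct and follows essentially the same approach as the paper's own proof: telescoping the per-step decrease from Lemma \ref{le:damped_PN_scheme} (with $\zeta_j \geq \beta$ giving the uniform lower bound $\omega((1-\kappa)\beta)$) for Phase I, and unrolling $t_k = (1-\sigma_\beta)^k t_0$ for Phase II, then using $\sigma_\beta \propto 1/\sqrt{\nu}$ for the final complexity. Your write-up is in fact somewhat more complete than the paper's, since you explicitly verify via Lemma \ref{le:lower_bound} that the Phase I output lands in $\mathcal{Q}^{t_0}_\beta$ and you spell out the inductive invocation of Theorem \ref{th:main_statement} and Lemma \ref{le:rel_Delta_lambda} to propagate $\lambda_{t_k}^+ \leq \beta$ along the path; the paper's proof leaves both of these implicit.
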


% The proof of Theorem 4.1.
\begin{proof}
From Lemma \ref{le:damped_PN_scheme} and the choice of $\delta$ we have 
\begin{equation*}
F(\xb^{j+1}_{t_0}; t_0) - F(\xb^{*}_{t_0}; t_0)\leq  F(\xb^j_{t_0}; t_0) - F(\xb^{*}_{t_0}; t_0) - \omega\left((1-\kappa)\zeta_j\right), ~~\forall j\geq 0.
\end{equation*}
Moreover, by induction, we can show that
\begin{align*}
0 \leq F(\xb^j_{t_0}; t_0) - F(\xb^{*}_{t_0}; t_0) &\leq  F(\xb^0_{t_0}; t_0) - F(\xb^{*}_{t_0}; t_0) - \sum_{l=0}^{j-1}\omega\left((1-\kappa)\zeta_l\right) \nonumber\\
& \leq F(\xb^0_{t_0}; t_0) - F(\xb^{*}_{t_0}; t_0) - j\omega((1-\kappa)\beta).
\end{align*}
This implies 
\begin{align*}
j \leq \frac{F(\xb^0_{t_0}; t_0) - F(\xb^{*}_{t_0}; t_0)}{\omega((1-\kappa)\beta)},
\end{align*}
which shows that the number of iterations to obtain $\lambda_{t_0}^+ \leq \beta$ is at most $j_{\max}$. % given by \eqref{eq:x0_complexity}.

For Phase II,  by induction, we have $t_k = t_0(1-\sigma_{\beta})^k$.
Since we desire $t_{k_{\max}} = t_f$, this leads to ${k_{\max}} \geq \frac{\ln(t_0/t_f)}{-\ln(1-\sigma_{\beta})}$. By rounding up the right-hand side of this inequality, we obtain \eqref{eq:path_following_complexity}.

Finally, note that $\ln(1 - \sigma_{\beta}) \approx \sigma_{\beta}$. By the definition of $\sigma_{\beta} = \frac{C(\beta)}{(C(\beta) + 1)\sqrt{\nu}}$, we can easily observe that the worst-case analytical complexity of Phase II, which is $\mathcal{O}\left(\sqrt{\nu}\ln\big(\frac{t_0}{t_f}\big)\right)$. 
\end{proof}
% End of the proof.

%%%%%%%%%%%%%%%%%%%%%%%%%%%%%%%%%%%%%%%%%%%%%%%%%%%%%%%%
% 5. Application to constrained convex optimization.
%%%%%%%%%%%%%%%%%%%%%%%%%%%%%%%%%%%%%%%%%%%%%%%%%%%%%%%%
\section{Application to constrained convex optimization}\label{sec:constrained_case}
We now specify Algorithm \ref{alg:PF_PN_alg} to solve the constrained convex programming problem of the form \eqref{eq:constr_cvx_prob}.
We assume that $f$ is the $\nu$ - self-concordant barrier associated with $\Omega$ such that $\mathrm{Dom}(f) \equiv \Omega$.
First, we show the relation between the solution of the constrained problem \eqref{eq:constr_cvx_prob} and the parametric problem \eqref{eq:cvx_prob} in the following lemma, whose proof can be found in the appendix.

%% Lemma 2.1.
\begin{lemma}\label{le:barrier_solution}
Let $\xb^{*}$ be a solution of \eqref{eq:constr_cvx_prob} and $\xb^{*}_t$ be the solution of \eqref{eq:cvx_prob} at a given $t > 0$, i.e., $\xb_t^{*} \in \mathrm{int}(\Omega)$. 
Then, for any $t > 0$, $\xb^{*}_t$ is strictly feasible to \eqref{eq:cvx_prob} and
\begin{equation}\label{eq:approx_sol_P_and_CP}
0 \leq g(\xb^{*}_t) - g(\xb^{*}) \leq t\nu.
\end{equation}
Let $\xb_{t_k}$ be the point generated by $\mathrm{Algorithm~\ref{alg:PF_PN_alg}}$ at the $k$-th iteration and $\xb^{*}_{t_k}$ be the solution of \eqref{eq:cvx_prob} at $t = t_k$. Then
\begin{equation}\label{eq:rel_Fk_and_Fstar}
-\nu t_k \leq g(\xb_{t_k}) -  g(\xb^{*}_{t_k}) \leq t_k \left(\sqrt{\nu}\frac{\lambda_{t_k}^+}{1 - \lambda_{t_k}} +  \frac{\lambda_{t_k}}{(1 - \lambda_{t_k})^2}\left(\lambda_{t_k}^+ + \lambda_{t_k} + \delta_k\right) + \frac{\delta_k^2}{2}\right). 
\end{equation}
provided that $\lambda_{t_k} < 1$. 
Consequently, it holds that
\begin{equation}\label{eq:app_sol}
0 \leq g(\xb_{t_k}) - g(\xb^{*}) \leq t_k \psi(\nu, \lambda_{t_k}, \lambda_{t_k}^+,\delta_k),
\end{equation}
where $\psi(\nu, \lambda_{t_k}, \lambda_{t_k}^+, \delta_k) :=  \nu + \sqrt{\nu}\frac{\lambda_{t_k}^+}{1 - \lambda_{t_k}} + \frac{\lambda_{t_k}}{(1-\lambda_{t_k})^2}\left(\lambda_{t_k}^+ + \lambda_{t_k} + \delta_k\right) + \frac{\delta_k^2}{2}$ and $\lambda_{t_k} < 1$.
\end{lemma}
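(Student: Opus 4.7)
The lemma contains three coupled claims, which I would tackle in sequence. The estimate \eqref{eq:approx_sol_P_and_CP} is a classical barrier bound: the lower inequality $g(\xb^*_t) \geq g(\xb^*)$ is immediate since $\xb^*_t \in \mathrm{int}(\Omega) \subseteq \Omega$ is feasible for \eqref{eq:constr_cvx_prob}. For the upper bound, the optimality condition \eqref{eq:optimality_for_ln_barrier} yields $-t\nabla f(\xb^*_t) \in \partial g(\xb^*_t)$, so the subgradient inequality at $\xb^*$ gives
\begin{equation*}
 g(\xb^*_t) - g(\xb^*) \;\leq\; t\,\nabla f(\xb^*_t)^T(\xb^* - \xb^*_t).
\end{equation*}
The standard self-concordant-barrier estimate $\nabla f(\xb)^T(\yb - \xb) \leq \nu$ for $\yb \in \mathrm{Dom}(f)$, a direct Cauchy--Schwarz consequence of \eqref{eq:used} and the definition, then bounds the right-hand side by $t\nu$.

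The lower bound $g(\xb_{t_k}) - g(\xb^*_{t_k}) \geq -t_k\nu$ in \eqref{eq:rel_Fk_and_Fstar} follows from the same template applied at $(\xb_{t_k}, \xb^*_{t_k})$: the subgradient inequality with $-t_k\nabla f(\xb^*_{t_k}) \in \partial g(\xb^*_{t_k})$ produces $g(\xb_{t_k}) - g(\xb^*_{t_k}) \geq -t_k\nabla f(\xb^*_{t_k})^T(\xb_{t_k} - \xb^*_{t_k})$, and the barrier inequality $\nabla f(\xb^*_{t_k})^T(\xb_{t_k} - \xb^*_{t_k}) \leq \nu$ closes it. Once both halves of \eqref{eq:rel_Fk_and_Fstar} are in hand, the aggregate estimate \eqref{eq:app_sol} follows by telescoping $g(\xb_{t_k}) - g(\xb^*) = [g(\xb_{t_k}) - g(\xb^*_{t_k})] + [g(\xb^*_{t_k}) - g(\xb^*)]$, substituting the upper bounds from \eqref{eq:rel_Fk_and_Fstar} and \eqref{eq:approx_sol_P_and_CP}, and collecting the resulting terms into the form of $\psi$; non-negativity $g(\xb_{t_k}) \geq g(\xb^*)$ is immediate from $\xb_{t_k} \in \dom{F} \subseteq \Omega$.

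The real work is the upper bound in \eqref{eq:rel_Fk_and_Fstar}. Because $\xb_{t_k}$ is only a $\delta_k$-approximate minimizer of $\widehat{F}(\cdot; t_k, \xb_{t_{k-1}})$, no subgradient of $g$ at $\xb_{t_k}$ is directly available. I would introduce the exact proximal-Newton point $\bar{\xb}^+_{t_k}$ as a phantom iterate and split
\begin{equation*}
 g(\xb_{t_k}) - g(\xb^*_{t_k}) \;=\; \bigl[g(\xb_{t_k}) - g(\bar{\xb}^+_{t_k})\bigr] + \bigl[g(\bar{\xb}^+_{t_k}) - g(\xb^*_{t_k})\bigr].
\end{equation*}
The first bracket is controlled by Definition~\ref{de:inexact_sol}, which rearranges to $g(\xb_{t_k}) - g(\bar{\xb}^+_{t_k}) \leq t_k[Q(\bar{\xb}^+_{t_k};\xb_{t_{k-1}}) - Q(\xb_{t_k};\xb_{t_{k-1}})] + t_k\delta_k^2/2$. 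The second admits the explicit subgradient $-t_k\nabla Q(\bar{\xb}^+_{t_k}) \in \partial g(\bar{\xb}^+_{t_k})$ from the exact optimality condition \eqref{eq:opt_cond_subprob}, hence $g(\bar{\xb}^+_{t_k}) - g(\xb^*_{t_k}) \leq t_k\nabla Q(\bar{\xb}^+_{t_k})^T(\xb^*_{t_k} - \bar{\xb}^+_{t_k})$. Adding the two and invoking the exact quadratic identity $Q(\bar{\xb}^+_{t_k}) - Q(\xb_{t_k}) = \nabla Q(\bar{\xb}^+_{t_k})^T(\bar{\xb}^+_{t_k} - \xb_{t_k}) - \frac{1}{2}\norm{\bar{\xb}^+_{t_k} - \xb_{t_k}}^2_{\xb_{t_{k-1}}}$ collapses the $\bar{\xb}^+_{t_k}$-dependence, leaving the clean master estimate
\begin{equation*}
 g(\xb_{t_k}) - g(\xb^*_{t_k}) \;\leq\; t_k\nabla Q(\bar{\xb}^+_{t_k}; \xb_{t_{k-1}})^T(\xb^*_{t_k} - \xb_{t_k}) + \frac{t_k\delta_k^2}{2}
\end{equation*}
after discarding a non-positive quadratic residual.

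To convert the master estimate into the closed form stated in \eqref{eq:rel_Fk_and_Fstar}, I would decompose
\begin{equation*}
 \nabla Q(\bar{\xb}^+_{t_k}) \;=\; \nabla f(\xb^*_{t_k}) + [\nabla f(\xb_{t_{k-1}}) - \nabla f(\xb^*_{t_k})] + \nabla^2 f(\xb_{t_{k-1}})(\bar{\xb}^+_{t_k} - \xb_{t_{k-1}})
\end{equation*}
and bound each piece by Cauchy--Schwarz. The $\nabla f(\xb^*_{t_k})$ term delivers the $\sqrt{\nu}\lambda_{t_k}^+/(1-\lambda_{t_k})$ contribution via \eqref{eq:used} once the inner product is rewritten in a convenient local norm. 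The other two pieces are absorbed into the factor $\lambda_{t_k}(\lambda_{t_k}^+ + \lambda_{t_k} + \delta_k)/(1-\lambda_{t_k})^2$ by combining (i)~the self-concordance Hessian comparison $(1-\lambda_{t_k})^2\nabla^2 f(\xb^*_{t_k}) \preceq \nabla^2 f(\xb_{t_{k-1}}) \preceq (1-\lambda_{t_k})^{-2}\nabla^2 f(\xb^*_{t_k})$, legal exactly because $\lambda_{t_k} < 1$ (the lemma's hypothesis), which controls the $(1-\lambda_{t_k})^{\pm 1}$ conversions between the local norms at $\xb_{t_{k-1}}$ and $\xb^*_{t_k}$, with (ii)~a triangle-type bound such as $\norm{\bar{\xb}^+_{t_k} - \xb_{t_{k-1}}}_{\xb^*_{t_k}} \leq \lambda_{t_k}^+ + \lambda_{t_k} + \delta_k/(1-\lambda_{t_k})$, assembled from \eqref{eq:computable_criterion2}. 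The hardest part of the proof is precisely this last step: every inner product must be transported consistently between the two natural local norms and the attendant $(1-\lambda_{t_k})^{\pm 1}$ factors collected into the stated majorant without slack.
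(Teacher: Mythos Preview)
Your proposal is correct and follows essentially the same route as the paper: both proofs handle \eqref{eq:approx_sol_P_and_CP} and the lower bound of \eqref{eq:rel_Fk_and_Fstar} via the subgradient inequality at $\xb^{*}_t$ combined with the barrier estimate $\nabla f(\xb)^T(\yb-\xb)\le\nu$, and for the upper bound of \eqref{eq:rel_Fk_and_Fstar} both introduce the exact proximal point $\bar{\xb}^{+}_{t_k}$, pair Definition~\ref{de:inexact_sol} with the optimality condition \eqref{eq:opt_cond_subprob}, and close with the Hessian comparison $(1-\lambda_{t_k})^2\nabla^2 f(\xb^{*}_{t_k})\preceq\nabla^2 f(\xb_{t_{k-1}})\preceq(1-\lambda_{t_k})^{-2}\nabla^2 f(\xb^{*}_{t_k})$. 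Your ``master estimate'' is in fact algebraically equivalent to the paper's intermediate inequality (their equation with the bracket $[\cdots]_{[1]}$).

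The only divergence is in the last bookkeeping step. The paper does \emph{not} split off $\nabla f(\xb^{*}_{t_k})$; it bounds $\nabla f(\xb_{t_{k-1}})^T(\xb^{*}_{t_k}-\xb_{t_k})$ directly by $\sqrt{\nu}\,\lambda_{t_k}^{+}/(1-\lambda_{t_k})$ via a norm conversion, and it arranges the Hessian term so that the inner product is against $\xb^{*}_{t_k}-\xb_{t_{k-1}}$ (whose local norm is $\lambda_{t_k}$) rather than against $\xb^{*}_{t_k}-\xb_{t_k}$ (whose local norm is $\lambda_{t_k}^{+}$). That is precisely what makes $\lambda_{t_k}$, not $\lambda_{t_k}^{+}$, the outer factor in the second term of \eqref{eq:rel_Fk_and_Fstar}. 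Your three-way split naturally produces $\lambda_{t_k}^{+}$ as that outer factor instead, so while it yields a perfectly valid upper bound, it will not match the exact constants stated in \eqref{eq:rel_Fk_and_Fstar} without an additional (loose) majorization. If you want the stated form verbatim, keep $\nabla f(\xb_{t_{k-1}})$ intact and rewrite the Hessian inner product against $\xb^{*}_{t_k}-\xb_{t_{k-1}}$ before applying Cauchy--Schwarz.
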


The estimate \eqref{eq:approx_sol_P_and_CP} in Lemma \ref{le:barrier_solution} shows that for sufficiently small $t > 0$, the solution $\xb^{*}_t$ of \eqref{eq:cvx_prob} approximates the solution $\xb^{*}$ of \eqref{eq:constr_cvx_prob}, i.e. $g(\xb^{*}_t) \to g(\xb^{*})$ as $t\downarrow 0^{+}$. 
The estimate \eqref{eq:app_sol} in Lemma \ref{le:barrier_solution}  suggests that if a sequence $\set{ (\xb_{t_k}, t_k)}_{k\geq 0}$  is generated by Algorithm \ref{alg:PF_PN_alg} for $t_f \leq \varepsilon$, then $\set{\xb_{t_k}}_{k\geq 0}$ converges to $\xb^{*}$ provided that the parameter $t_k$ is updated as $t_{k+1} := (1-\sigma_{\beta})t_k $ and $\delta_k \leq \kappa\beta$ (See Theorem \ref{th:main_statement}). 

If we apply Algorithm \ref{alg:PF_PN_alg}  to solve the constrained optimization problem \eqref{eq:constr_cvx_prob}, then we need to change the stopping criterion as $t_f \leq \varepsilon$ for a given accuracy $\varepsilon > 0$. Then the convergence of Algorithm \ref{alg:PF_PN_alg} for solving \eqref{eq:constr_cvx_prob} is given in the following theorem.

%% Theorem 3.4.
\begin{theorem}\label{th:convergence_theorem}
Let $\set{(\xb_{t_k}, t_k)}_{k\geq 0}$ be a sequence generated by Algorithm \ref{alg:PF_PN_alg} for solving \eqref{eq:constr_cvx_prob}. Then, after $\bar{k}$ iterations in Phase II, we have the following bound
\begin{equation}\label{eq:obj_value_est2}
\vert g(\xb_{t_{\bar{k}}}) - g(\xb^{*})\vert \leq \psi(\beta, \nu)t_{\bar{k}},
\end{equation}
where $\psi(\beta,\nu) := \nu + \sqrt{\nu}\frac{\beta}{1 - 0.4\sqrt{\beta}} + \frac{0.4\sqrt{\beta}}{(1 - 0.4\sqrt{\beta})^2}\left(1.075\beta + 0.4\sqrt{\beta}\right) + 0.003\beta^2$ is a constant.

Consequently, the worst-case analytical complexity of Phase II in Algorithm \ref{alg:PF_PN_alg} to achieve an $\varepsilon$-optimal solution, i.e., $\vert g(\xb_{t_{\bar{k}}}) - g(\xb^{*})\vert \leq
\varepsilon$, is $\mathcal{O}\left(\sqrt{\nu}\log(\frac{t_0 \psi(\beta, \nu)}{\varepsilon})\right)$. 
\end{theorem}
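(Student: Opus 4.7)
The plan is to combine the per-iteration invariant established in Theorem \ref{th:main_statement} with the a posteriori bound \eqref{eq:app_sol} from Lemma \ref{le:barrier_solution}, and then plug in the complexity estimate of Theorem \ref{th:complexity_estimate}. First, I would observe that the function
\[
\psi(\nu, \lambda, \lambda^{+}, \delta) := \nu + \sqrt{\nu}\frac{\lambda^{+}}{1-\lambda} + \frac{\lambda}{(1-\lambda)^2}\bigl(\lambda^{+} + \lambda + \delta\bigr) + \frac{\delta^2}{2}
\]
is monotonically non-decreasing in each of the arguments $\lambda, \lambda^{+}, \delta$ on the relevant range, so any uniform upper bound on these three quantities immediately yields a uniform upper bound on $\psi$.

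Next, I would track the invariant along Phase II. Because Phase I is run until $\lambda_{t_0}^{+} \leq \beta$, and because Theorem \ref{th:main_statement} guarantees that at every subsequent iteration $\lambda_{t_k}^{+} \leq \beta$ and $\lambda_{t_k} \leq \tfrac{1}{2.581}\sqrt{\beta} \leq 0.4\sqrt{\beta}$, we have uniform control on the distances. The algorithm's choice $\delta_k \leq 0.075\beta$ gives uniform control on the inexactness. Since $\beta \in (0, 0.15]$, we have $0.4\sqrt{\beta} < 1$, so the condition $\lambda_{t_k} < 1$ required in Lemma \ref{le:barrier_solution} holds. Substituting these three uniform bounds into \eqref{eq:app_sol} yields
\[
0 \leq g(\xb_{t_{\bar{k}}}) - g(\xb^{*}) \leq t_{\bar{k}}\,\psi(\beta, \nu),
\]
where
\[
\psi(\beta,\nu) = \nu + \sqrt{\nu}\frac{\beta}{1 - 0.4\sqrt{\beta}} + \frac{0.4\sqrt{\beta}}{(1-0.4\sqrt{\beta})^2}\bigl(1.075\beta + 0.4\sqrt{\beta}\bigr) + 0.003\beta^2,
\]
using $(0.075\beta)^2/2 \leq 0.003\beta^2$ and $\beta + 0.4\sqrt{\beta} + 0.075\beta = 1.075\beta + 0.4\sqrt{\beta}$. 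This establishes \eqref{eq:obj_value_est2}.

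For the complexity claim, I would observe that enforcing $\psi(\beta,\nu)t_{\bar{k}} \leq \varepsilon$ amounts to running Phase II until $t_{\bar{k}} \leq t_f := \varepsilon/\psi(\beta,\nu)$. Plugging this target into the bound \eqref{eq:path_following_complexity} from Theorem \ref{th:complexity_estimate}, the number of outer iterations is at most $\lfloor \ln(t_0/t_f)/(-\ln(1-\sigma_\beta))\rfloor + 1$. Using $-\ln(1-\sigma_\beta) \geq \sigma_\beta$ together with $\sigma_\beta = C(\beta)/((1+C(\beta))\sqrt{\nu})$, we conclude that the total number of Phase II iterations is $\mathcal{O}\!\left(\sqrt{\nu}\,\ln\!\bigl(t_0\psi(\beta,\nu)/\varepsilon\bigr)\right)$, matching the claimed complexity.

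The only subtle step is verifying the monotonicity of $\psi$ (and the fact that the upper bounds on $\lambda_{t_k}$, $\lambda_{t_k}^{+}$, $\delta_k$ already lie well inside the domain where $\psi$ is finite and increasing); everything else is a direct chaining of previously established estimates. The bookkeeping of the $\beta$-dependent constants in $\psi(\beta,\nu)$ is routine once the three uniform invariants are in place.
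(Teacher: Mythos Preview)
Your proposal is correct and follows essentially the same approach as the paper: substitute the uniform invariants $\lambda_{t_k}^{+}\le\beta$, $\lambda_{t_k}\le\tfrac{1}{2.581}\sqrt{\beta}\le 0.4\sqrt{\beta}$, and $\delta_k\le 0.075\beta$ from Theorem~\ref{th:main_statement} into the bound \eqref{eq:app_sol}, then derive the iteration count from the geometric decay $t_k=(1-\sigma_\beta)^k t_0$. In fact you are more explicit than the paper's own proof, which simply asserts that $\psi(\beta,\nu)\ge\nu$ and invokes \eqref{eq:app_sol} without spelling out the monotonicity of $\psi$ or the arithmetic behind the constants $1.075\beta$ and $0.003\beta^2$.
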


% The proof of Theorem 5.2.
\begin{proof}
By the definition of $\psi$ in Lemma \ref{le:barrier_solution} we can easily show that  $\psi(\beta,\nu) \geq \nu$.
On one hand, using this relation and \eqref{eq:app_sol}, we have $\vert g(\xb_{t_k}) - g(\xb^{*})\vert \leq \psi(\beta,\nu)t_{k}$.
On the other hand, by induction, we have $t_k = (1-\sigma_{\beta})^kt_0$ after $k$ iterations. 
Therefore, if $(1-\sigma_{\beta})^k t_0 \psi(\beta, \nu) \leq \varepsilon$, we can conclude that $\vert g(\xb_{t_k}) - g(\xb^{*})\vert \leq \varepsilon$. 
The last condition leads to $k \geq \frac{\log\left(\frac{t_0\psi(\beta, \nu)}{\varepsilon}\right)}{-\log(1-\sigma_{\beta})}$.  
Since $\log(1-\sigma_{\beta}) \approx -\sigma_{\beta}$, we conclude that the worst-case analytical complexity of Phase II in Algorithm \ref{alg:PF_PN_alg} is $\mathcal{O}\left(\sqrt{\nu}\log(\frac{t_0 \psi(\beta, \nu)}{\varepsilon})\right)$.
\end{proof}
% End of the proof.

%%% 6. Applications in graphical model selections and clustering. 
\section{Numerical experiments}\label{sec:application}
In this section, we first discuss the implementation aspects of Algorithm  \ref{alg:PF_PN_alg}. 
Next, we show how to customize this algorithm to solve a standard convex programming problem.
Then, we provide three numerical examples: 
The first example is a synthetic  low-rank approximation problem with additional constraints to highlight the inefficiency of off-the-self IP solvers.
The second one is an application to clustering using max-norm as a concrete example for constrained convex optimization.
The third example is an application to graph learning where we track the approximate solution of this problem along the regularization parameter horizon.

%% **************************************************************************************
%% 6.1. Implementation issue
%% **************************************************************************************
\subsection{Implementation issues}\label{subsec:impl_isueses}
Some fundamental implementation issues in Algorithm \ref{alg:PF_PN_alg} are the following:

% Method for subproblems and warm-start
\paragraph{Methods for subproblems \eqref{eq:barrier_cvx_subprob1} and \eqref{eq:cvx_subprob2} and warm-start}
The main ingredient in Algorithm \ref{alg:PF_PN_alg} is the solution of \eqref{eq:barrier_cvx_subprob1} and \eqref{eq:cvx_subprob2}. 
The more efficiently this problem is solved, the faster Algorithm \ref{alg:PF_PN_alg} becomes. 
For certain classes of $g$, e.g., $\ell_1$-norm, nuclear norm, atomic norm or simple projections, this problem is well-studied. 

Subproblems \eqref{eq:barrier_cvx_subprob1} and \eqref{eq:cvx_subprob2} have the same structure over the iterations.
This observation can be exploited a priori by using the similarity between $\nabla{f}(\xb_{t_{k-1}})$, $\nabla^2f(\xb_{t_{k-1}})$ and $\nabla{f}(\xb_{t_k})$, $\nabla^2f(\xb_{t_k})$, for each $k$.
Since evaluating $\nabla{f}$ and $\nabla^2f$ is the most costly part in the subsolvers, exploiting properly the problem structure for computing these quantities can accelerate the algorithm (see the examples below).

Second, \eqref{eq:barrier_cvx_subprob1} and \eqref{eq:cvx_subprob2} is strongly convex. Several first order methods can be applied and yield a linear convergence \cite{Beck2009,Nesterov2004,Nesterov2007}.
When $g$ is the indicator of a polytope or a convex quadratic set (e.g., Euclidian balls), it turns out to be a quadratic program or a quadratically constrained quadratic program. Efficiency of solving this problem is well-understood.

Finally, warm-start strategies is key for efficiently solving \eqref{eq:barrier_cvx_subprob1} and \eqref{eq:cvx_subprob2}.
Given that the information from the previous iteration is available, the distance bewteen $\xb_{t_k}$ and $\xb_{t_{k+1}}$ is usually small. This observation suggests us to initialize the subsolvers with the solution provided by the previous iteration.
Note that warm-start is very important in active-set methods \cite{Nocedal2006}, which can be used as a workhorse for \eqref{eq:barrier_cvx_subprob1} and \eqref{eq:cvx_subprob2}. 

% Adaptive parameter update.
\paragraph{Adaptive parameter update}
Since the update rule $t_{k+1} := (1 \pm \sigma_{\beta})t_k$ is based on the worst-case estimate of $\sigma_{\beta}$, it is better to replace it by an adaptive factor $\sigma_k$ for acceleration.
In fact, from the proof of Lemma \ref{le:rel_Delta_lambda} we can derive
\begin{equation}\label{eq:adaptive_est1}
t_{k+1}(1 + \Delta_k)^{-1}\Delta_k \leq \abs{d_k}\norm{\nabla{f}(\xb^{*}_{t_k})}^{*}_{\xb^{*}_{t_{k+1}}}.
\end{equation}
First, one can show that $\Vert\nabla{f}(\xb^{*}_{t_k})\Vert^{*}_{\xb^{*}_{t_{k+1}}} \leq (1- \lambda_{t_k})^{-1}\Vert\nabla{f}(\xb^{*}_{t_k})\Vert^{*}_{\xb_{t_k}}$.
Second, by the triangle inequality, we have $\Vert\nabla{f}(\xb^{*}_{t_k})\Vert^{*}_{\xb_{t_k}} \leq \Vert\nabla{f}(\xb^{*}_{t_k}) - \nabla{f}(\xb_{t_k})\Vert^{*}_{\xb_{t_k}} + \Vert\nabla{f}(\xb_{t_k})\Vert^{*}_{\xb_{t_k}}$. 
However, since $\Vert\nabla{f}(\xb^{*}_{t_k}) - \nabla{f}(\xb_{t_k})\Vert^{*}_{\xb_{t_k}} \leq (1-\lambda_{t_k}^+)^{-1}\lambda_{t_k}^+$, the last inequality leads to $\Vert\nabla{f}(\xb^{*}_{t_k})\Vert^{*}_{\xb_{t_k}} \leq (1-\lambda_{t_k}^+)^{-1}\lambda_{t_k}^+ + \Vert\nabla{f}(\xb_{t_k})\Vert^{*}_{\xb_{t_k}}$.
Combining all these derivations, we eventually get 
\begin{equation}\label{eq:adaptive_est2}
\Vert\nabla{f}(\xb^{*}_{t_k})\Vert^{*}_{\xb^{*}_{t_{k+1}}} \leq (1-\lambda_{t_k})^{-1}\left( (1-\lambda_{t_k}^+)^{-1}\lambda_{t_k}^+ + \Vert\nabla{f}(\xb_{t_k})\Vert^{*}_{\xb_{t_k}} \right).
\end{equation}
From Theorem \ref{th:main_statement}, we have  $\lambda_{t_k}^+ \leq\beta$ and $\lambda_{t_k} \leq 0.3874\sqrt{\beta}$. 
Then, if we define
\begin{align}\label{eq:R_k}
R_k(\beta) &:= (1-0.3874\sqrt{\beta})^{-1}\left((1-\beta)^{-1}\beta + \norm{\nabla{f}(\xb_{t_k})}_{\xb_{t_k}}^{*}\right) \nonumber\\
& \leq (1-0.3874\sqrt{\beta})^{-1}\left((1-\beta)^{-1}\beta + \sqrt{\nu}\right),
\end{align}
then, we can derive the update rule for $t_k$ as $t_{k+1} = (1 \pm \sigma_k)t_k$, where $\sigma_k$ is given as
\begin{equation}\label{eq:t_update_new}
\sigma_k := \max\set{ \frac{C(\beta)}{C(\beta) + (1-C(\beta))R_k(\beta)}, \sigma_{\beta}} \in (0, 1),
\end{equation}
and $C(\beta)$ and $\sigma_{\beta}$ are given in the previous section.
A similar strategy for updating $t$ in the case $f(\xb)$ is replaced by $f(\xb) + Q(\xb)$ can be derived by using the same techniques as in \cite{Nesterov2004c}, where $Q$ is a convex quadratic function.

%% **************************************************************************************
%% 6.2. Specification of Algorithm 1 to some standard convex optimization problems.
%% **************************************************************************************
\subsection{Instances of Algorithm \ref{alg:PF_PN_alg}}
Algorithm  \ref{alg:PF_PN_alg} can be customized to solve a broad class of constrained convex problems of the form:
\begin{equation}\label{eq:general_convex}
\left.\begin{array}{ll}
\displaystyle\min_{\xb\in\mathbf{R}^n} &h(\xb) \\
\mathrm{s.t} &\xb \in \mathcal{C}\cap \Omega,
\end{array}\right.
\end{equation}
where $h$ is a proper, lower semicontinuous and convex function, $\mathcal{C}$ is a nonempty, closed and convex set, $\Omega$ is also a nonempty, closed and convex endowed with a $\nu$-self-concordant barrier $f$.
Let $g(\xb) := h(\xb) + \delta_{\mathcal{C}}(\xb)$, where $\delta_{\mathcal{C}}$ is the indicator function of $\mathcal{C}$. 
Then, problem \eqref{eq:general_convex} can equivalently be converted into \eqref{eq:constr_cvx_prob}.

As a concrete example, we show that Algorithm \ref{alg:PF_PN_alg} can be customized to solve the constrained problems of the form \eqref{eq:constr_cvx_prob} with additional linear equality constraints $\mathbf{A}\xb = \mathbf{b}$. For simplicity of discussion, let us consider the following standard quadratic conic programming problem:
\begin{equation}\label{eq:conic}
\left.\begin{array}{ll}
\displaystyle\min_{\xb \in \mathcal{K}} &\frac{1}{2}\xb^T\mathbf{Q}\xb + \mathbf{q}^T\xb\\
\mathrm{s.t.} & \mathbf{A}\xb = \mathbf{b},
\end{array}\right.
\end{equation}
where $\mathbf{Q}$ is a symmetric positive semidefinite and $\mathcal{K}$ is a proper, closed, self-dual cone in $\mathbf{R}^n$ (including positive semidefinite cone), which is endowed with a $\nu$-self-concordant barrier $f$. 
It is also possible to include inequality constraints $\mathbf{B}\xb \leq \mathbf{c}$. 

In order to customize Algorithm \ref{alg:PF_PN_alg} for solving \eqref{eq:conic}, we define $g(\xb) := \frac{1}{2}\xb^T\mathbf{Q}\xb + \mathbf{q}^T\xb + \delta_{\mathcal{C}}(\xb)$, where $\delta_{\mathcal{C}}$ is the indicator function of $\mathcal{C} := \set{\xb\in\mathbf{R}^n ~|~ \mathbf{A}\xb = \mathbf{b}}$. 
Then, problem \eqref{eq:conic} can be cast into  \eqref{eq:constr_cvx_prob}.
In principle, we can apply Algorithm \ref{alg:PF_PN_alg} to solve the resulting problem.
Now, let us consider the corresponding convex subproblem \eqref{eq:barrier_cvx_subprob1} associated with \eqref{eq:conic} as follows
\begin{equation}\label{eq:conic_subprob}
{\small
\begin{array}{ll}
\displaystyle\min_{\xb\in\mathrm{int}(\mathcal{K})} \!\! \Big\{\frac{1}{2}\xb^T\left(t\nabla^2f(\xb_{t_k}) + \mathbf{Q}\right)\xb + \left(\mathbf{q} \!+\! t\nabla{f}(\xb_{t_k}) \!-\! t\nabla^2f(\xb_{t_k})\xb_{t_k}\right)^T\xb + \delta_{\mathcal{C}}(\xb)\Big\}.
\end{array}}
\end{equation}
The optimality condition for this problem becomes
\begin{equation}\label{eq:conic_opt_cond}
\begin{cases}
\left(\mathbf{Q} + t\nabla^2f(\xb_{t_k})\right)\xb + \mathbf{q} + t\nabla{f}(\xb_{t_k}) - t\nabla^2f(\xb_{t_k})\xb_{t_k} + \mathbf{A}^T\mathbf{y} &= 0,\\
\mathbf{A}\xb - \mathbf{b} &= 0.
\end{cases}
\end{equation}
Here, $\mathbf{y}$ is the Lagrange multiplier associated with the equality constraints $\mathbf{A}\xb - \mathbf{b} = 0$.
Let us define $\db := \xb - \xb_{t_k}$, then we can write \eqref{eq:conic_opt_cond} as follows
\begin{equation}\label{eq:linear_system}
\begin{pmatrix} \mathbf{Q} + t\nabla^2f(\xb_{t_k}) & \mathbf{A}^T \\ \mathbf{A} & \mathbf{0}\end{pmatrix}
\begin{pmatrix}\db \\ \yb \end{pmatrix} =
\begin{pmatrix} - \mathbf{q} - \mathbf{Q}\xb_{t_k} - t\nabla{f}(\xb_{t_k}) \\ \mathbf{b} - \mathbf{A}\xb_{t_k}\end{pmatrix}.
\end{equation}
Solving this linear system provides us a Newton search direction for Algorithm \ref{alg:PF_PN_alg}.
In fact, this linear system \eqref{eq:linear_system} coincides with the  system of computing Newton direction in standard primal interior-point methods for solving \eqref{eq:conic} directly,  see, e.g., \cite{BenTal2001,Nesterov1994,Roos2006,Wright1997}.

%% **************************************************************************************
%% 6.4. Simultaneous Sparse and Low-rank matrix completion.
%% **************************************************************************************
\subsection{Low-rank SDP matrix approximation}
To illustrate the scalability and accuracy of the proposed path-following scheme, we consider the following matrix approximation problem, which arises from, e.g., quantum tomography and phase-retrieval \cite{Banaszek1999,Candes2011a}:
\begin{equation}\label{eq:matrix_approx}
\left.\begin{array}{ll}
\displaystyle\min_{\mathbf{X}\in\mathcal{S}^n} & \rho\norm{\vec{\mathbf{X} - \mathbf{M}}}_1 + (1-\rho)\trace{\mathbf{X}}\\
\mathrm{s.t.} & \mathbf{X} \succeq 0, ~\mathbf{L}_{ij} \leq \mathbf{X}_{ij} \leq \mathbf{U}_{ij}, ~i, j = 1,\dots, n.
\end{array}\right.
\end{equation}  
Here $\mathbf{M}\in\mathbb{R}^{n\times n}$ is a given matrix (not necessarily positive definite);  $\rho \in [0, 1]$ is a given regularization parameter and $\mathbf{L}$ and $\mathbf{U}$ are the element-wise lower and upper bound of $\mathbf{M}$.
Problem \eqref{eq:matrix_approx} is a convex relaxation of the problem of approximating $\mathbf{M}$ by a low-rank and positive semidefinite matrix $\mathbf{X}$.
Here, the trace-norm is used to approximate the rank of $\mathbf{X}$ and $\norm{\cdot}_1$ is used to measure the distance from $\mathbf{X}$ to $\mathbf{M}$.

Let $\Omega := \mathcal{S}^n_{++}$ the cone of symmetric positive semidefinite matrices, and $g(\mathbf{X}) := \rho\norm{\vec{\mathbf{X} - \mathbf{M}}}_1 + (1-\rho)\trace{\mathbf{X}} + \delta_{[\mathbf{L}, \mathbf{U}]}(\mathbf{X})$, where $\delta_{[\mathbf{L}, \mathbf{U}]}$ is the indicator function of the interval 
\begin{equation*}
[\mathbf{L}, \mathbf{U}] := \set{\mathbf{X} \in \mathcal{S}^n ~|~ \mathbf{L}_{ij} \leq \mathbf{X}_{ij} \leq \mathbf{U}_{ij}, ~i, j = 1, \cdots, n}.
\end{equation*}
Since $f(\mathbf{X}) := -\log\det(\mathbf{X})$ is  the standard barrier function of $\Omega$, we can reformulate \eqref{eq:matrix_approx} in the form of \eqref{eq:constr_cvx_prob}.

In this example, we  test Algorithm \ref{alg:PF_PN_alg} and compare it with two standard interior point solvers, called \texttt{SDPT3} \cite{Tutunku2003} and \texttt{SeDuMi} \cite{Sturm1999}.
The parameters are configured as follows. We choose $t_0 := 10^{-2}$ and terminate the algorithm if $t_k \leq 10^{-7}$. The starting point $\mathbf{X}^0$ is set to $\mathbf{X}^0 := 0.1\mathbb{I}$, where $\mathbb{I}$ is the identity matrix.
We tackle \eqref{eq:barrier_cvx_subprob2} and \eqref{eq:cvx_subprob2} by applying the FISTA algorithm \cite{Beck2009}, where the accuracy is controlled at each iteration.

The data is generated as follows. 
First, we generate a sparse Gaussian random matrix $\mathbf{R}\sim\mathcal{N}(0,1)$ of the size $n\times k$, where $k = \lfloor 0.25n\rfloor$ is the rank of $\mathbf{R}$, and the sparsity is $25\%$. 
Then, we generate matrix $\mathbf{M} := \mathbf{R}^T\mathbf{R} + 10^{-4}\mathbf{E}$,  where $\mathbf{E}\sim\mathcal{N}(0,\mathbb{I})$. 
The lower bound $\mathbf{L}$ and the upper bound $\mathbf{U}$ are given as $\mathbf{L} := (m_l- 0.1\abs{m_l})\mathbb{I}$ and $\mathbf{U} := (m_u + 0.1\abs{m_u})\mathbb{I}$, where $m_l := \min_{i,j}\mathbf{M}_{ij}$ and $m_u := \max_{i, j}\mathbf{M}_{ij}$. 

We test three algorithms on five problems of size $n \in \lbrace 80, 100, \dots, 160 \rbrace$ w.r.t.\ $\rho = 0.2$. Table \ref{table:matapp_compare} reports the results and the performance of these three algorithms. 
Our platform is \textsc{Matlab} 2011b on a PC Intel Xeon X5690 at 3.47GHz per core with 94Gb RAM. 

\begin{table*}[!ht]
\begin{center}
\newcommand{\cell}[1]{{\!\!\!}#1{\!\!\!}}
\newcommand{\cellr}[1]{{\scriptsize{\!\!\!\!}#1{\!\!\!\!}}}
\caption{Comparison of Algorithm \ref{alg:PF_PN_alg},  \texttt{SDPT3} and \texttt{SeDuMi}}\label{table:matapp_compare}
\begin{footnotesize}
\begin{tabular}{c|c|r|r|r|r|r}\toprule
& \cell{Solver$\backslash n$} & $80$ & $100$ & $120$ & $140$ & $160$\\
\midrule \midrule
\cell{Size} & \cellr{$[n_v; n_c]$} & \cellr{[\textcolor{red}{16,200; 9,720}]} & \cellr{[\textcolor{red}{25,250; 15,150}]} & \cellr{[\textcolor{red}{36,300; 21,780}]} & \cellr{[\textcolor{red}{49,350; 29,610}]} & \cellr{[\textcolor{red}{64,400; 38,640}]} \\
\midrule
\multirow{3}{*}{\cell{Time (sec)}}           & \cell{\texttt{PFPN}}    & \cell{\textcolor{blue}{15.738}} & \cell{\textcolor{blue}{24.046}} & \cell{\textcolor{blue}{24.817}} & \cell{\textcolor{blue}{25.326}} & \cell{\textcolor{blue}{36.531}} \\
                                                              &\cell{\texttt{SDPT3}}   & \cell{156.340} & \cell{508.418} & \cell{881.398} & \cell{1742.502} & \cell{2948.441} \\
                                                              &\cell{\texttt{SeDuMi}}  & \cell{231.530} & \cell{970.390} & \cell{3820.828} & \cell{9258.429} & \cell{17096.580} \\
\midrule 
\multirow{2}{*}{\cell{$g(\mathbf{X}^{\ast})$}} & \cell{\texttt{PFPN}}  & \cell{306.9159} & \cell{\textcolor{blue}{497.6706}} & \cell{\textcolor{blue}{635.4304}} & \cell{\textcolor{blue}{842.4626}} & \cell{\textcolor{blue}{1096.6516}} \\
                                                              & \cell{\texttt{SDPT3}} 	  & \cell{\textcolor{blue}{306.9153}} & \cell{497.6754} & \cell{635.4306} & \cell{842.4644} & \cell{1096.6540} \\
                                                              & \cell{\texttt{SeDuMi}}       & \cell{306.9176} & \cell{497.6821} & \cell{635.4384} & \cell{842.4776} & \cell{1096.6695} \\
\midrule
\multirow{2}{*}{\cell{$[\mathrm{rank}, \mathrm{sparsity}]$} } 
                                                              & \cell{\texttt{PFPN}}    & \cell{$[20, 30.53\%]$} & \cell{$[26, 27.37\%]$} & \cell{$[30, 25.27\%]$} & \cell{$[35, 23.64\%]$} & \cell{$[40, 21.54\%]$} \\
                                                              & \cell{\texttt{SDPT3}}  & \cell{$[20, 41.02\%]$} & \cell{$[25, 36.99\%]$} & \cell{$[30, 51.61\%]$} & \cell{$[35, 45.03\%]$} & \cell{$[40, 49.07\%]$} \\
                                                              & \cell{\texttt{SeDuMi}} & \cell{$[20, 45.23\%]$} & \cell{$[25, 64.20\%]$} & \cell{$[30, 54.83\%]$} & \cell{$[35, 60.87\%]$} & \cell{$[40, 59.24\%]$} \\
\bottomrule
\end{tabular}
\end{footnotesize}
\end{center}
\end{table*} 

From Table \ref{table:matapp_compare} we can see that if we reformulate problem \eqref{eq:matrix_approx} into a standard SDP problem where \texttt{SDPT3} and \texttt{SeDuMi} can solved, then the number of variables $n_v$ and the number of constraints $n_c$ increase rapidly (highlighted with red color).
Consequently, the computational time in \texttt{SDPT3} and \texttt{SeDuMi} also increase significantly compared to Algorithm \ref{alg:PF_PN_alg}. Moreover, \texttt{SeDuMi} is much slower than \texttt{SDPT3} in this particular example.
Since Algorithm \ref{alg:PF_PN_alg}  does not require to transform problem \eqref{eq:matrix_approx} into a standard SDP problem, we can clearly see the computational advantage of this algorithm to standard interior-point solvers, e.g., \texttt{SDPT3} and \texttt{SeDuMi}, for solving problem \eqref{eq:matrix_approx}. 
We note that the performance of Algorithm \ref{alg:PF_PN_alg} can be enhanced by carefully implementing adaptive update strategies for $t_k$,  preconditioning techniques as well as restart tricks for our FISTA procedure.

%% **************************************************************************************
%% 6.3. The max-norm constrained $l_1$-norm optimization problem.
%% **************************************************************************************
\subsection{Max-norm and $\ell_1$-norm optimization in clustering}
In this example, we show an application of Algorithm \ref{alg:PF_PN_alg} to solve  a constrained SDP problem arising from the {correlation} clustering \cite{Bansal2004}, where the number of clusters is unknown. 
Briefly, the problem statement is as follows: Given a graph with $p$ vertices, let $\mathbf{A}$ be its affinity matrix (cf., \cite{Bansal2004} for the definition). The clustering goal here is to partition the set of vertices such that the total disagreement with the edge labels is minimized in $\mathbf{A}$, which is an explicitly combinatorial problem. The work in \cite{Jalali2012} proposes a tight convex relaxation \eqref{eq:clustering_prob}, which poses significant difficulties to the IPM methods when the dimensions scale up. The  approach is called max-norm constrained clustering, and if solved correctly, has rigorous theoretical guarantees of correctness for its solution. 

In this example, we demonstrate that  Algorithm \ref{alg:PF_PN_alg} can obtain medium accuracy solutions in a  scalable fashion as compared to a state-of-the-art IPM.
Here, we use the adaptive update rule \eqref{eq:update_tk}. The algorithm terminates if $t_k \leq 10^{-3}$ and $\lambda_{t_k}^+ \leq 10^{-8}$. We also solve \eqref{eq:barrier_cvx_subprob2} and \eqref{eq:cvx_subprob2} by applying FISTA. 

\begin{table*}[!ht]
 \small
 \newcommand{\cell}[1]{{\!\!}$#1${\!\!}}
\begin{center}
\caption{Average values over 10 Monte Carlo iterations for each dimension $p$.  The variable $\mathbf{K}^{\ast}$ refers to the respective solution at convergence as returned by the algorithms under comparison.}\label{table:compare}
\begin{tabular}{c|r|r|r|r|r|r}\toprule
& $p$ & $50$ & $75$ & $100$ & $150$ & $200$\\
\midrule \midrule
\multirow{3}{*}{Time (sec)}
& PF                & \cell{62.450} & \cell{109.426} & \cell{202.600} & \cell{\textcolor{blue}{416.044}} & \cell{1573.881} \\
& SDPT3             & \cell{\textcolor{blue}{4.396}} & \cell{\textcolor{blue}{21.282}} & \cell{\textcolor{blue}{64.939}} & \cell{522.021} & \cell{2588.721} \\
&\cite{Jalali2012} & \cell{102.217} & \cell{236.366} & \cell{354.444} & \cell{778.904} & \cell{\textcolor{blue}{1420.844}} \\
\midrule 
\multirow{2}{*}{$g(\mathbf{K}^{\ast})$} 
& PF                & \cell{\textcolor{blue}{549.1567}} & \cell{\textcolor{blue}{1293.6727}} & \cell{\textcolor{blue}{2232.5897}} & \cell{\textcolor{blue}{5396.0485}} & \cell{\textcolor{blue}{9809.6066}} \\
& SDPT3             & \cell{549.1860} & \cell{1293.7890} & \cell{2233.0747} & \cell{5396.7305} & \cell{9809.6934} \\
&\cite{Jalali2012} & \cell{597.8825} & \cell{1387.1379} & \cell{2496.6535} & \cell{5583.8605} & \cell{9958.0974} \\
\bottomrule
\end{tabular}
\end{center}
\end{table*} 

We compare our algorithm with the off-the-self, IPM implementation \texttt{SDPT3} \cite{Tutunku2003}, both in terms of time- and memory-complexity. 
Since the curse-of-dimensionality renders the execution of \texttt{SDPT3} impossible in higher dimensions, we use the low precision mode in \texttt{SDPT3} (i.e., $\varepsilon \approx 1.5\times 10^{-8}$) in order to execute larger problems within a reasonable time frame. We compare these two schemes based on synthetic data, generated as described in \cite{Jalali2012}. 

\begin{figure}[!ht]
\begin{minipage}{0.51\textwidth}
\centerline{\includegraphics[width=0.99\linewidth]{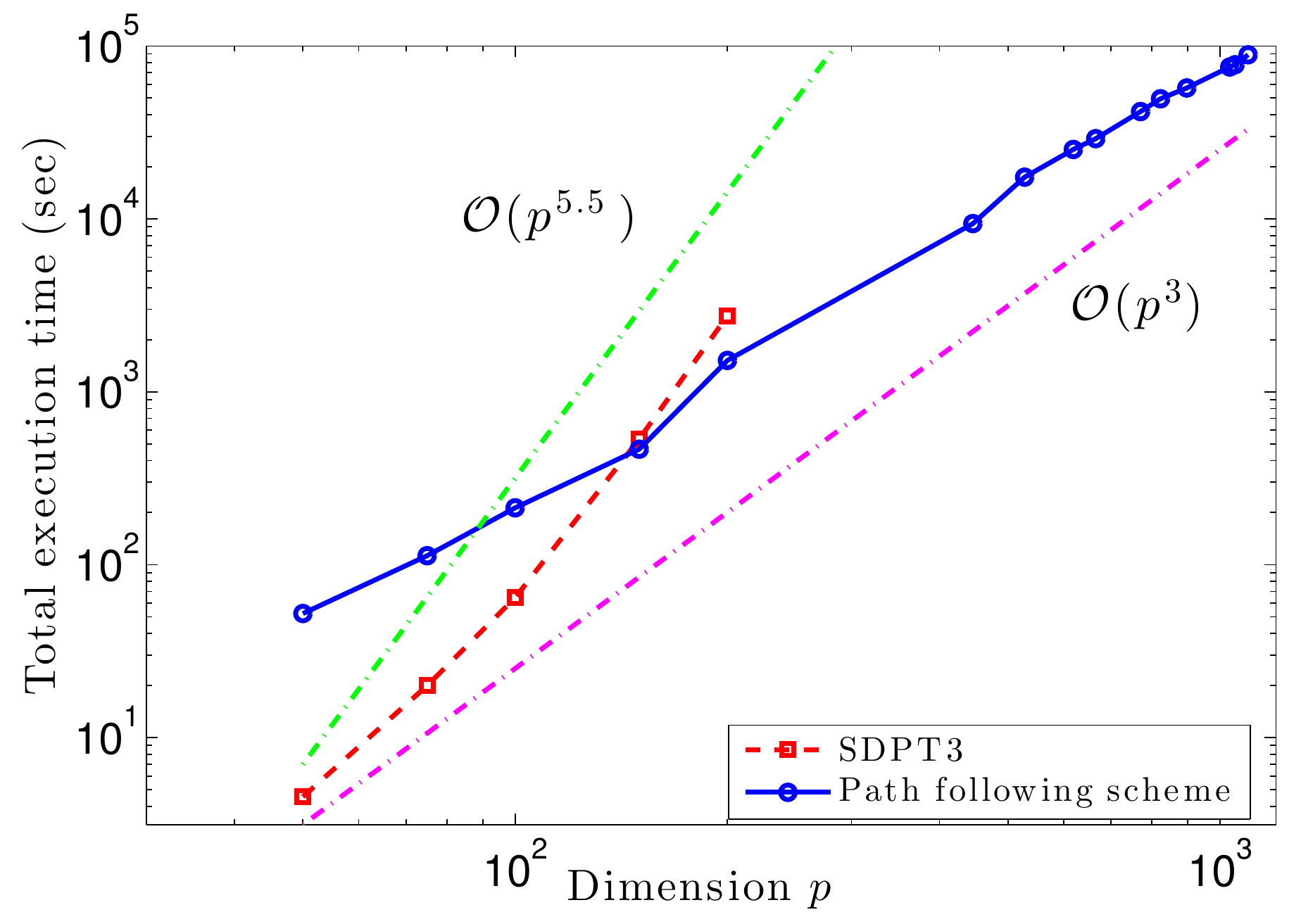}}
\end{minipage}
\begin{minipage}{0.47\textwidth}
\newcommand{\cell}[1]{{\!\!}#1{\!\!}}
\small{\begin{tabular}{c|c|c||c}
& \multicolumn{2}{|c||}{\texttt{SDPT3}} & \cell{PF scheme} \\
\toprule
$p $ & \cell{variables} & \cell{constraints} & \cell{variables} \\
\midrule \midrule
$50$ & $15.1 $ & $2.6 $ & $10$\\
\midrule 
$75$ & $33.9 $ & $5.8 $ & $22.5 $\\
\midrule 
$100$ & $60.2 $ & $10.2 $ & $40$\\
\midrule 
$150$ & $135.3 $ & $22.8 $ & $90 $\\
\midrule 
$200$ & $240.4 $ & $40.4 $ & $160 $\\
\bottomrule
\end{tabular}}
\end{minipage}
\caption{\textbf{(Left)} Execution times. \textbf{(Right)} Number of variables and equality constraints in thousands.  
}\label{fig:sol_compare}
\end{figure} 

In terms of solution accuracy, our scheme with the aforementioned parameter settings is comparable to the low-precision mode of \texttt{SDPT3}, and can often obtain accurate solutions (cf., Table \ref{table:compare}). However, Figure \ref{fig:sol_compare}(Left) illustrates that our path following scheme has a rather dramatic scaling advantage as compared to \texttt{SDPT3}: $\mathcal{O}(p^{3})$ for ours vs. $\mathcal{O}(p^{5.5})$ for \texttt{SDPT3}. Because of this scaling, \texttt{SDPT3} cannot handle problems instances where $p >200$ in our computer. 

Reasons for our scalability are twofold. First, our path following scheme avoids ``lifting'' the problem into higher dimensions. Hence, as the problem dimensions grow (cf., Fig.~\ref{fig:sol_compare}(Right); numbers are in thousands), our memory requirement scales in a better fashion. Moreover, we do not have to handle additional (in)equality constraints.  Second, the subproblem solver has linear convergence rate due to its construction (i.e., $\nabla^2f\succ 0$). Hence, our fast solver (FISTA) obtains medium accuracy solutions quickly since the proximal operator is efficient and has a closed form, and a warm-start strategy is used. 

We also compare the proposed scheme with the scalable Factorization Method (FM), presented in \cite{Jalali2012}: a state-of-the-art, non-convex implementation of \eqref{eq:clustering_prob}, based on splitting techniques. The code is publicly available at \url{http://www.ali-jalali.com/}. We modified this code to include a stopping criterion at a tolerance of 
\begin{equation*}
\norm{\mathbf{K}_{t_{k+1}} - \mathbf{K}_{t_k}}_F \leq 10^{-8}\max\set{\norm{\mathbf{K}_{t_k}}_F, 1}.
\end{equation*}
In Table \ref{table:compare}, we report the average results of $10$ Monte-Carlo realizations for different $p$'s. While the non-convex approach exhibits lower computational complexity empirically,\footnote{Theoretically, FM's computational cost is proportional to the cost of $p\times p$ matrix multiplications. } its solution quality suffers as compared to the convex solution, which has theoretical guarantees. It is clear that the non-convex approach is rather susceptible to local minima.

%% **************************************************************************************
%% 6.2. The Gaussian graphical model selection problem.
%% **************************************************************************************
\subsection{Sparse Pareto frontier in sparse graph learning}\label{subsec:graph_learning}
Many machine learning and signal processing problems naturally feature composite minimization problems where $f$ is directly self-concordant, such as sparse regression with unknown noise variance \cite{Stadler2012}, Poisson imaging \cite{Harmany2012}, one-bit compressive sensing, and graph learning \cite{Ravikumar2011, Kyrillidis2013}. Here, we consider the graph learning problem: Let $\mathbf{\Sigma}$ be the covariance matrix of a Gaussian Markov random field (GMRF) and let $\Xb= \mathbf{\Sigma}^{-1}$. To satisfy the conditional dependencies with respect to the GMRF, $\Xb$ must have zero in $\Xb_{ij}$
corresponding to the absence of an edge between node $i$ and node $j$  \cite{Dempster1972}. Hence, given the empirical covariance  $\widehat{\mathbf{\Sigma}} \succeq 0 $, which is possibly rank deficient, we would like to learn the underlying GMRF. 

It turns out that we can still learn GMRF's with theoretical consistency guarantees from a number of data samples as few as $m = \mathcal{O}(d^2\log p)$ \cite{Ravikumar2011}, where $d$ is the graph node degree, via 
\begin{equation}\label{eq:g_learn}
\min_{\Xb \in \mathbb{R}^{p \times p}:~\Xb\succ 0}\set{-\log\det(\Xb) + \trace{\widehat{\mathbf{\Sigma}}\Xb} + \rho\norm{\vec{\Xb}}_1}, 
\end{equation} 
where $\rho > 0$ is a regularization parameter. We easily observe that \eqref{eq:g_learn} satisfies the $\mathcal{P}(t)$ formulation for $t = 1/\rho$. 
Unfortunately, the theoretical results only indicate the existence of a regularization parameter for consistent estimates and we have to tune to obtain the best  $\rho^*$ in practice. 
We note that the function $f(\Xb) := -\log\det(\Xb)$ is a self-concordant barrier of $\mathcal{S}^p_{+}$. 
As discussed in Subsection \ref{subsec:impl_isueses}, we can modify the update rule for $\rho_k$, we can still apply Algorithm \ref{alg:PF_PN_alg} to track the Pareto frontier of problem \eqref{eq:g_learn} for the case $f(\cdot) + \langle{\mathbf{c}, \cdot}\rangle$ instead of $f(\cdot)$, see, e.g., \cite{Nesterov2004c}.

To the best of our knowledge, the selection of $\rho^{\ast}$ with respect to a general-purpose objective, such as $\mathcal{P}(\rho)$, still remains widely open. For GMRF learning, a homotopy approach is proposed in \cite{Lu2010,Scheinberg2009}, where $\rho$ is updated by a non-adaptive multiplicative factor such that $\rho_{k+1} = c \rho_k$ for $0 < c < 1$. This approach is usually time consuming in practice, and may skip solutions with sparsity close to the desired sparsity level.
Traditionally, \eqref{eq:g_learn}  is addressed by IPM's. Other than \cite{Tran-Dinh2013b} exploited here, we do not know any scalable method that has rigorous global convergence guarantees for \eqref{eq:g_learn} as it has a globally non-Lipschitz continuous gradient. 
The authors in \cite{Banerjee2008} use a probabilistic heuristic to select $\rho$: as the number of samples go to infinity, this heuristic leads to the maximum likelihood (unregularized) estimator. In practice though, the proposed $\rho$ values are quite large and do not consistently lead to good solutions.

\begin{figure}[t]
\centerline{\includegraphics[width=.99\linewidth, height = 4.2cm]{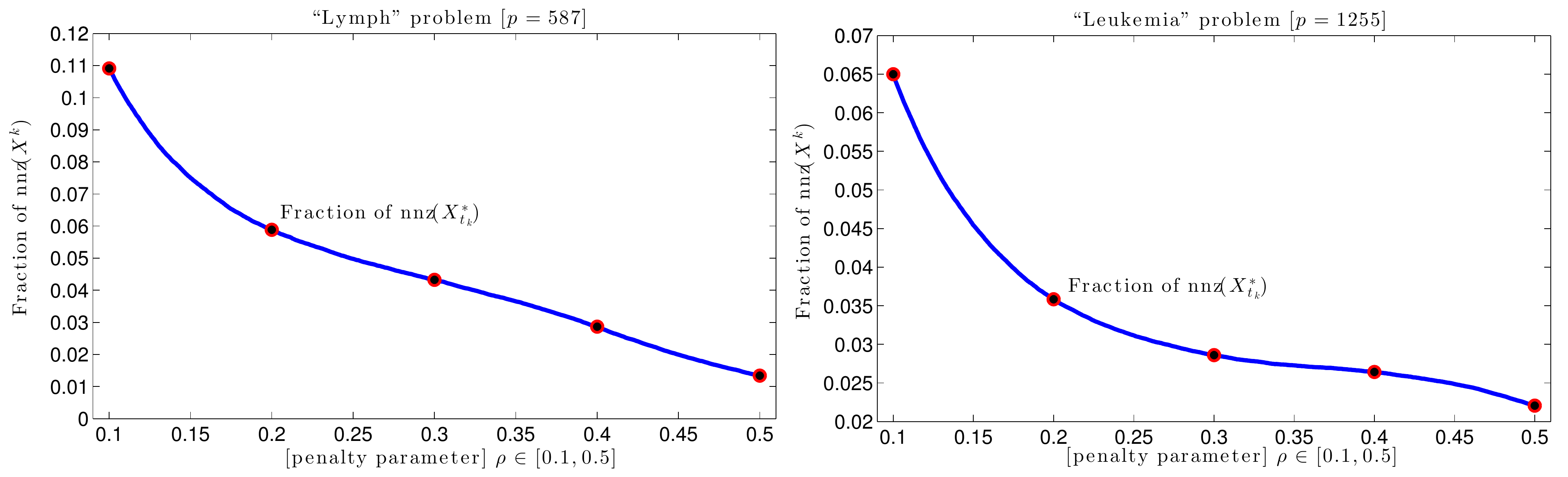}}
\caption{Impact of the regularization parameter to the solution sparsity.}\label{fig:glearn_compare3}
\end{figure} 
 
To this end, our scheme provides an adaptive strategy on how to update the regularization parameter. For instance, we can pick a range $\rho \in [\rho_{\text{min}}, \rho_0]$ and apply our path-following scheme, starting from $\rho_0$ until we either achieve the desired solution sparsity or we reach the lower bound $\rho_{\text{min}}$.  To illustrate the approach, we choose two real data examples from \url{http://ima.umn.edu/~maxxa007/send_SICS/}: \texttt{Lymph} and \texttt{Leukemia}, where the GMRF sizes are $p=587$ and $1255$, respectively.   Figure \ref{fig:glearn_compare3} shows the solution sparsity vs.\ the penalty parameter curve (not to be confused with $f$ vs.\ $g$ curve, which is a convex Pareto curve) as obtained in a tuning-free fashion by our  scheme. 

In order to verify the obtained Pareto curve $\set{\mathbf{X}^k_{\rho_k}}$ well approximates the true solution trajectory $\mathbf{X}^{*}(\rho)$ of the problem \eqref{eq:g_learn}, we apply the proximal-Newton algorithm in \cite{TranDinh2013c} to compute the approximate solution $\widetilde{\mathbf{X}}(\rho_k)$ to $\mathbf{X}^{*}(\rho_k)$ at five different points of $\rho$.
The relative errors $e_k := \norm{\mathbf{X}_{\rho_k} - \widetilde{\mathbf{X}}(\rho_k)}_F/\max\set{\widetilde{\mathbf{X}}(\rho_k)}$ as well as the number of nonzero elements \texttt{n.n.z.} are shown in Table \ref{tbl:rel_errors}.
\begin{table}
\newcommand{\cell}[1]{{\!\!\!}#1{\!\!\!}}
\begin{center}
\caption{The relative error and the number of nonzero elements of two approximate solutions}\label{tbl:rel_errors}
\begin{footnotesize}
\begin{tabular}{c|c|c|c|c|c}\toprule
$\rho$ & 0.1 & 0.2 & 0.3 & 0.4 & 0.5 \\
\midrule \midrule
\multicolumn{6}{c}{Lymph $(n = 587)$} \\ \midrule
\cell{Relative error $e_k$} & \cell{0.0011} & \cell{0.0013} & \cell{0.0018} & \cell{0.0018} & \cell{$7.5342\times 10^{-6}$}\\ \midrule
\cell{\texttt{n.n.z.} ($\widetilde{\mathbf{X}}(\rho_k)/\mathbf{X}_{\rho_k}$)} & \cell{37587/37561} & \cell{20275/20269} & \cell{14901/14875} & \cell{9869/9871} & \cell{4615/4615} \\ \midrule
\multicolumn{6}{c}{Leukemia $(n = 1255)$} \\ \midrule
\cell{Relative error $e_k$} & \cell{$6.1643\times 10^{-4}$} & \cell{$5.5701 \times 10^{-4}$} &  \cell{$6.2124 \times 10^{-4}$} &  \cell{$5.6060\times 10^{-4}$} & \cell{$3.6497\times 10^{-6}$}\\ \midrule
\cell{\texttt{n.n.z.} ($\widetilde{\mathbf{X}}(\rho_k)/\mathbf{X}_{\rho_k}$)}  & \cell{102313/102253} & \cell{56451/56421} & \cell{45051/45055} & \cell{41613/41609} & \cell{34761/34761}\\ 
\bottomrule
\end{tabular}
\end{footnotesize}
\end{center}
\end{table}
We can see from this table that both solutions are relatively close to each other both in terms of relative error and the sparsity.
\section{Concluding remarks}\label{sec:concluding_remarks}
We have proposed a new inexact path-following framework for minimizing (possibly) non-smooth and non-Lipschitz gradient objectives under constraints that admit a self-concordant barrier. 
We have shown how to solve such problems without lifting problem dimensions via additional slack variables and constraints.
Our method is quite modular: custom implementations only require the corresponding custom solver for the composite subproblem \eqref{eq:barrier_cvx_subprob2} with a strongly convex quadratic smooth term and a tractable proximity of the second term $g$.
We have provided a rigorous analysis that establish the worst-case analytical complexity of our approach via a new joint treatment of proximal methods and self-concordant optimization schemes. 
While our scheme maintains the original problem structure, its  worst-case analytical complexity of the outer loop remains the same as in standard path-following interior point methods \cite{Nesterov2004}.  
However, the overall complexity of the algorithm depends on the solution of the subproblem \eqref{eq:barrier_cvx_subprob2}.
We have also shown how the new scheme can obtain points on the Pareto frontier of regularized problems (with globally non-Lipschitz gradient of the smooth part $f$).  
We have numerically illustrated our method on three examples involving the nonsmooth constrained convex programming problems of matrix variables. 
Numerical results have shown that the new path-following scheme is superior to some off-the-self IP solvers such as \texttt{SeDuMi} and \texttt{SDPT3} that require to transform the problem into standard conic programs.

\section{Acknowledgment}
The authors are also grateful to the anonymous reviewers as well as to the associate editor for their thorough comments and suggestions on improving the content and the presentation of this paper.
This work was supported in part by the European Commission under Grant MIRG-268398, ERC Future Proof and SNF 200021-132548, SNF 200021-146750 and SNF CRSII2-147633.
%%%%%%%%%%%%%%%%%%%%%%%%%%%%%%%%%%%%%%%%%%%%%%%%%%%%%%
%%%                           				Supplementary material.
%%%%%%%%%%%%%%%%%%%%%%%%%%%%%%%%%%%%%%%%%%%%%%%%%%%%%%
%\newpage
\appendix
\section{Technical proofs}\label{app:tech_proofs}
We provide in this appendix the full proofs of two theorems: Theorem \ref{th:quad_converg_FPNM} and Theorem \ref{th:main_statement}, and three technical lemmas: Lemma \ref{le:rel_Delta_lambda}, Lemma \ref{le:damped_PN_scheme} and Lemma \ref{le:barrier_solution}.

%% Proof of Theorem 3.2.
\subsection{The proof of Theorem \ref{th:quad_converg_FPNM}}
%\begin{proof}
We define the restricted approximate gap between $\nabla^2f(\xb^{*}_t)$ and $\nabla^2f(\xb_t)$ along the direction $\bar{\xb}^+_t - \xb_t$ as $\bar{\mathbf{r}}_t := (\nabla^2f(\xb^{*}_t) - \nabla^2f(\xb_t))(\bar{\xb}^+_t - \xb_t)$.
Then, by using the definition \eqref{eq:P_x} of $P_{\xb}^g$ and \eqref{eq:S_x} of $S_{\xb}$, we can write \eqref{eq:opt_cond_subprob} equivalently to
\begin{equation}\label{eq:opt_cond_subprob1}
\bar{\xb}^+_t = P^g_{\xb^{*}_t}\left( S_{\xb^{*}_t}(\xb_t) + \bar{\mathbf{r}}_t; t\right).
\end{equation}
Now, we can estimate $\bar{\lambda}^+_t := \norm{\bar{\xb}^+_t - \xb^{*}_t}_{\xb^{*}_t}$ as follows
\begin{align}\label{eq:th32_est_1}
\bar{\lambda}^+_t &:= \norm{\bar{\xb}^+_t - \xb^{*}_t}_{\xb^{*}_t} \nonumber\\
&\overset{\tiny\eqref{eq:opt_cond_subprob1}+\eqref{eq:fixed_point_xstar}}{=} \norm{P^g_{\xb^{*}_t}\left( S_{\xb^{*}_t}(\xb_t) + \bar{\mathbf{r}}_t; t\right) - P^g_{\xb^{*}_t}\left(S_{\xb^{*}_t}(\xb^{*}_t); t\right) }_{\xb^{*}_t} \nonumber\\
&\overset{\tiny\eqref{eq:nonexapansiveness}}{\leq} \norm{ S_{\xb^{*}_t}(\xb_t)  - S_{\xb^{*}_t}(\xb^{*}_t) + \bar{\mathbf{r}}_t }_{\xb^{*}_t}^{*} \nonumber\\
&\leq \norm{ S_{\xb^{*}_t}(\xb_t)  - S_{\xb^{*}_t}(\xb^{*}_t)}_{\xb^{*}_t}^{*}  + \norm{\bar{\mathbf{r}}_t }_{\xb^{*}_t}^{*}.
\end{align}
Similarly to the proof of \cite[Theorem 4.1.14]{Nesterov2004} or \cite[Theorem 5]{TranDinh2013c}, we show that
\begin{equation}\label{eq:th32_est_2}
\norm{ S_{\xb^{*}_t}(\xb_t)  - S_{\xb^{*}_t}(\xb^{*}_t)}_{\xb^{*}_t}^{*}  \leq \frac{\lambda_t^2}{1 - \lambda_t},
\end{equation}
provided that $\lambda_t < 1$.

Next, we estimate $\norm{\bar{\mathbf{r}}_t}_{\xb^{*}_t}^{*}$.
We have
\begin{align}\label{eq:th32_est_3}
 \norm{\bar{\mathbf{r}}_t}_{\xb^{*}_t}^{*} &=  \big\Vert\big(\nabla^2{f}(\xb^{*}_t) - \nabla^2f(\xb_t)\big)(\bar{\xb}^+_t - \xb_t)\big\Vert_{\xb^{*}_t}^\ast\nonumber \\ 
 &\leq \norm{\nabla^2 f(\xb_{t}^{\ast})^{-1/2} \big(\nabla^2{f}(\xb^{*}_t) - \nabla^2f(\xb_t)\big) \nabla^2 f(\xb_{t}^{\ast})^{-1/2}}_{2\rightarrow 2} \norm{\bar{\xb}^+_t - \xb_t}_{\xb^{\ast}_{t}} \nonumber \\
&= \norm{\mathbb{I} -  \nabla^2 f(\xb_{t}^{\ast})^{-1/2} \nabla^2f(\xb_t)\nabla^2 f(\xb_{t}^{\ast})^{-1/2}}_{2\rightarrow 2} \norm{\bar{\xb}^+_t - \xb_t}_{\xb^{\ast}_{t}},
\end{align}
where $\norm{\cdot}_{2\to2}$ is the $\ell_2$-norm of a matrix, i.e., $\norm{\Xb}_{2\to 2} := \max_{\ub}\set{\norm{\Xb\ub}_2 ~|~ \norm{\ub}_2 = 1}$ for a given matrix $\Xb$.
By applying \cite[Theorem 4.1.6]{Nesterov2004}, we can show that 
\begin{align}
\Big\Vert\mathbb{I} -  \nabla^2 f(\xb_{t}^{\ast})^{-1/2} \nabla^2f(\xb_t)\nabla^2 f(\xb_{t}^{\ast})^{-1/2}\Big\Vert_{2\rightarrow 2}  &\leq \max\set{ 1 - (1 - \lambda_t)^2, (1-\lambda_t)^{-2}-1} \nonumber \\ 
&= \frac{2\lambda_t- \lambda_t^2}{(1 - \lambda_t)^2}. \nonumber
\end{align} 
Substituting this estimate into \eqref{eq:th32_est_3} and then using the triangle inequality, we obtain
\begin{equation}\label{eq:th32_est_4}
\norm{\bar{\mathbf{r}}_t}_{\xb^{*}_t} \leq \left(\frac{2\lambda_t- \lambda_t^2}{(1 - \lambda_t)^2}\right)\norm{\bar{\xb}^+_t - \xb_t}_{\xb^{*}_t} \leq \left(\frac{2\lambda_t - \lambda_t^2}{(1 - \lambda_t)^2}\right)\left(\bar{\lambda}_t^+ + \lambda_t\right),
\end{equation}
provided that $\lambda_t < 1$.

Substituting \eqref{eq:th32_est_2} and \eqref{eq:th32_est_4} into \eqref{eq:th32_est_1} and then rearranging the result, we deduce 
\begin{equation}\label{eq:th32_est_5}
\bar{\lambda}_t^+ \leq \left(\frac{3 - 2\lambda_t}{1 - 4\lambda_t + 2\lambda_t^2}\right)\lambda_t^2,
\end{equation}
provided that $1 - 4\lambda_t + 2\lambda_t^2 > 0$.
We can easily show that the condition $1 - 4\lambda_t + 2\lambda_t^2 > 0$ holds if $\lambda_t \in [0, 1 - \frac{\sqrt{2}}{2})$.

Note that $0\preceq \nabla^2f(\xb^{*}_t) \preceq (1 - \lambda_t)^{-2}\nabla^2f(\xb_t)$ due to \cite[Theorem 4.1.6]{Nesterov2004}. Thus, for any $\ub$, we have $\norm{\ub}_{\xb^{*}_t} \leq (1 - \lambda_t)^{-1}\norm{\ub}_{\xb_t}$.
By using this inequality, \eqref{eq:computable_criterion2} and the triangle inequality, it is easy to show that
\begin{align*}
\lambda_t^+ &= \norm{\xb^+_t - \xb^{*}_t}_{\xb^{*}_t} \leq \norm{\xb^+_t - \bar{\xb}^+_t}_{\xb^{*}_t} + \norm{\bar{\xb}^+_t - \xb^{*}_t}_{\xb^{*}_t} \nonumber \\
&\overset{\tiny\eqref{eq:computable_criterion2}}{\leq} (1 - \lambda_t)^{-1}\delta_k + \bar{\lambda}_t^+.
\end{align*}
By substituting \eqref{eq:th32_est_5} into this inequality, we obtain
\begin{align}\label{eq:th32_est_6}
\lambda_t^+ \leq \frac{\delta_k}{1-\lambda_t} + \left(\frac{3 - 2\lambda_t}{1 - 4\lambda_t + 2\lambda_t^2}\right)\lambda_t^2.
\end{align}
Since $\lambda_t \in [0, 1 - \frac{\sqrt{2}}{2})$, the right-hand side of \eqref{eq:th32_est_6} is well-defined. 
Moreover, it is obvious to check that the right-hand side of  \eqref{eq:th32_est_6}  is increasing w.r.t. $\delta_k \geq 0$ and $\lambda_t \in [0, 1-\frac{\sqrt{2}}{2})$.
\Eproof
%% End of the proof.

\subsection{The proof of Theorem \ref{th:main_statement}}
%%% Proof of Theorem 3.2.
We define the function $\psi(\lambda) := \frac{3 -2\lambda}{1 - 4\lambda + 2\lambda^2}$. It is easy to check that $\psi$ is increasing in $[0, 1 - \sqrt{2}/2)$.
Let us limit the range of $\lambda \in [0, 0.15]$. Then, one can show that $\max\set{\psi(\lambda)  ~|~ \lambda \in [0, 0.15]} \leq 6.07$.  
Hence, we can upper estimate \eqref{eq:main_estimate1} by some elementary calculations to obtain
\begin{equation}\label{eq:main_estimate2}
\lambda_{t_{k+1}}^+  \leq 1.18\delta_k + 6.07\lambda_{t_{k+1}}^2.
\end{equation}
Now, we recall the following estimate from \cite[Lemma A.1.(c)]{TranDinh2012c} as
\begin{align}{\label{eq:quoc}}
\lambda_{t_{k+1}} \leq \frac{\lambda_{t_k}^+ + \Delta_k}{1 - \Delta_k},
\end{align}
provided that $\Delta_k < 1$.

Let us fix some $\beta \in (0,  0.15]$. By the assumption $\lambda_{t_k}^+ \leq \beta$, it follows from \eqref{eq:quoc} that
\begin{equation}\label{eq:th33_proof2b}	
\lambda_{t_{k+1}}  \leq \frac{\lambda_{t_k}^+  + \Delta_k}{1 - \Delta_k} \leq \frac{\beta + \Delta_k}{1-\Delta_k}.
\end{equation}	
Substituting \eqref{eq:th33_proof2b} into \eqref{eq:main_estimate2} we obtain
\begin{equation}\label{eq:th33_proof3}
\lambda_{t_{k+1}}^+ \leq 1.18\delta_k + 6.07\left(\frac{\beta + \Delta_k}{1 - \Delta_k}\right)^2.
\end{equation}
Since we desire $\lambda_{t_{k+1}}^+ \leq \beta$, by using \eqref{eq:th33_proof3}, we require $\left(\frac{\beta + \Delta_k}{1-\Delta_k}\right)^2 \leq \frac{\beta - 1.18\delta_k}{6.07}$ provided that $\delta_k < \beta/1.18$.
Since $\delta_k \leq 0.075\beta$, the last condition leads to
\begin{align}
0 \leq \Delta_k \leq \frac{\sqrt{\beta} - 2.581\beta}{2.581 + \sqrt{\beta}} < 1,
\end{align} 
for any $\beta \in (0, 0.15]$.
Finally, we can easy check that $\lambda_{t_{k+1}} \leq \frac{1}{2.581}\sqrt{\beta}$ for $k\geq 0$ due to \eqref{eq:main_estimate2}. 
\Eproof
% End of the proof.

\subsection{The proof of Lemma \ref{le:rel_Delta_lambda}}
% The proof of Lemma 3.3.
Since $\xb^{*}_{t_k}$ and $\xb^{*}_{t_{k+1}}$ are the solutions of \eqref{eq:cvx_prob} at $t = t_k$ and $t = t_{k+1}$, respectively, they satisfy the following optimality conditions:
\begin{align*}
&\mathbf{0} \in t_k\nabla{f}(\xb^{*}_{t_k}) + \partial{g}(\xb^{*}_{t_k}),\\
&\mathbf{0} \in t_{k+1}\nabla{f}(\xb^{*}_{t_{k+1}}) + \partial{g}(\xb^{*}_{t_{k+1}}).
\end{align*} 
Hence, there exist $\vb_k \in \partial{g}(\xb^{*}_{t_k})$ and $\vb_{k+1} \in \partial{g}(\xb^{*}_{t_{k+1}})$ such that $\vb_k = -t_k\nabla{f}(\xb^{*}_{t_{k}})$ and $\vb_{k+1} = - t_{k+1}\nabla{f}(\xb^{*}_{t_{k+1}})$. 
Then, we have
\begin{align*}
\vb_{k+1} - \vb_k &= t_k\nabla{f}(\xb^{*}_{t_k}) - t_{k+1}\nabla{f}(\xb^{*}_{t_{k+1}})  \nonumber \\
& \overset{\tiny\eqref{eq:update_t}}{=} t_k\left(\nabla{f}(\xb^{*}_{t_k}) - \nabla{f}(\xb^{*}_{t_{k+1}}) \right)  - d_{k}\nabla{f}(\xb^{*}_{t_{k+1}}).
\end{align*}
By using the convexity of $g$, the last expression implies
\begin{align*}
0 & \leq (\vb_{k+1} - \vb_k)^{T}(\xb^{*}_{t_{k+1}} - \xb^{*}_{t_k}) \nonumber \\ 
& =  t_k\left(\nabla{f}(\xb^{*}_{t_k}) - \nabla{f}(\xb^{*}_{t_{k+1}}) \right)^{T}(\xb^{*}_{t_{k+1}} - \xb^{*}_{t_k})  - d_{k}\nabla{f}(\xb^{*}_{t_{k+1}})^{T}(\xb^{*}_{t_{k+1}} - \xb^{*}_{t_k}) \nonumber\\
&\leq t_k\left(\nabla{f}(\xb^{*}_{t_k}) - \nabla{f}(\xb^{*}_{t_{k+1}}) \right)^{T}(\xb^{*}_{t_{k+1}} - \xb^{*}_{t_k}) + \abs{d_{k}}\Vert\nabla{f}(\xb^{*}_{t_{k+1}})\Vert_{\xb^{*}_{t_{k+1}}}^{*}\Vert\xb^{*}_{t_{k+1}}-\xb^{*}_{t_k}\Vert_{\xb^{*}_{t_{k+1}}},
\end{align*} 
where the last inequality is due to the generalized Cauchy-Schwatz inequality. 
Since $t_k > 0$, we further have
\begin{align}\label{eq:lm34_est3}
\left(\nabla{f}(\xb^{*}_{t_{k + 1}})  -  \nabla{f}(\xb^{*}_{t_k})\right)^T (\xb^{*}_{t_{k + 1}}   -  \xb^{*}_{t_k}) \leq \frac{\abs{d_k}}{t_k}\Vert\nabla{f}(\xb^{*}_{t_{k + 1}})\Vert_{\xb^{*}_{t_{k + 1}}}^{*}\Vert\xb^{*}_{t_{k + 1}}  -  \xb^{*}_{t_k}\Vert_{\xb^{*}_{t_{k + 1}}}.
\end{align}
However, since $f$ is standard self-concordant, by applying \cite[Theorem 4.1.7]{Nesterov2004}, we have
\begin{align*}
\left(\nabla{f}(\xb^{*}_{t_{k+1}}) - \nabla{f}(\xb^{*}_{t_k})\right)^T(\xb^{*}_{t_{k+1}} - \xb^{*}_{t_k}) \geq \frac{\Vert\xb^{*}_{t_{k+1}} - \xb^{*}_{t_k}\Vert_{\xb^{*}_{t_{k+1}}}^{2}}{1 + \Vert\xb^{*}_{t_{k+1}} - \xb^{*}_{t_k}\Vert_{\xb^{*}_{t_{k+1}}}}.
\end{align*} 
Using this inequality together with  \eqref{eq:lm34_est3} we obtain
\begin{align*}
\frac{\Vert \xb^{*}_{t_{k+1}} - \xb^{*}_{t_k}\Vert_{\xb^{*}_{t_{k+1}}}}{1 + \Vert \xb^{*}_{t_{k+1}} - \xb^{*}_{t_k}\Vert_{\xb^{*}_{t_{k+1}}}} &\leq
\frac{\abs{d_k}}{t_k}\Vert \nabla{f}(\xb^{*}_{t_{k+1}})\Vert_{\xb^{*}_{t_{k+1}}}^{*} \overset{\tiny\eqref{eq:used}}{\leq} \frac{\abs{d_k}}{t_k} \sqrt{\nu}.
\end{align*} 
where by the definition of $\Delta_k$, this completes the proof of \eqref{eq:Delta_est}. The last statement in Lemma \ref{le:rel_Delta_lambda} is a direct consequence of \eqref{eq:Delta_est}.
\Eproof
% End of the proof.

% The proof of Lemma 4.3.
\subsection{The proof of Lemma \ref{le:damped_PN_scheme}}
%\begin{proof}
Let $g_0(\cdot) := t^{-1}_0g(\cdot)$.
Similar to the proof of \cite[Lemma 3.3]{TranDinh2013c}, we can estimate
\begin{small}
\begin{equation}\label{eq:lm43_est1}
F(\xb^{j+1}_{t_0}; t_0)  -  F(\xb^j_{t_0}; t_0)  \leq  -\alpha_j\nabla{f}(\xb^j_{t_0})^T\db^j  +  \omega_{*}(\alpha_j\zeta_j)  +  \alpha_j\left(g_0(\sb^j_{t_0})  -  g_0(\xb^j_{t_0})\right),
\end{equation}
\end{small}
where $\alpha_j\zeta_j < 1$ and $\omega_{*}(\tau) := -\tau - \ln(1 - \tau)$.
From the definition \eqref{eq:F_x} of $\widehat{F}$ and \eqref{eq:inexact_criterion} we have 
\begin{align}\label{eq:lm43_est2}
g_0(\sb^j_{t_0}) - g_0(\xb^j_{t_0}) &\leq g_0(\bar{\sb}^j_{t_0}) - g_0(\xb^j_{t_0}) + \frac{(\delta_0^j)^2}{2} + \nabla{f}(\xb^j_{t_0})^T(\bar{\sb}^j_{t_0} - \sb^j_{t_0}) \nonumber\\
& + \frac{1}{2}\left(\norm{\bar{\sb}^j - \xb^j}_{\xb^j}^2 - \norm{\sb^j - \xb^j}_{\xb^j}^2\right).
\end{align}
Since $\bar{\sb}^j_{t_0}$ is the exact solution of \eqref{eq:cvx_subprob2}, using the optimality condition \eqref{eq:opt_cond_subprob} of this problem, we have
\begin{equation}\label{eq:lm43_est3}
\bar{\vb}^j_{t_0} = -\nabla{f}(\xb^j_{t_0}) - \nabla^2f(\xb^j_{t_0})(\bar{\sb}^j_{t_0} - \xb^j_{t_0}), ~ \bar{\vb}^j_{t_0} \in \partial{g}_0(\bar{\sb}^j_{t_0}).
\end{equation}
By the convexity of $g_0$, \eqref{eq:lm43_est3} implies
\begin{equation*}
g_0(\bar{\sb}^j_{t_0}) - g_0(\xb^j_{t_0}) \leq -\nabla{f}(\xb^j_{t_0})^T(\bar{\sb}^j_{t_0} - \xb^j_{t_0}) - \Vert\bar{\sb}^j_{t_0} - \xb^j_{t_0}\Vert_{\xb^j_{t_0}}^2.
\end{equation*}
Substituting this inequality into \eqref{eq:lm43_est2} and rearranging the result by using $\zeta_j = \Vert \db^j_{t_0}\Vert_{\xb^j_{t_0}} = \Vert \sb^j_{t_0} - \xb^j_{t_0}\Vert_{\xb^j_{t_0}}$, we obtain
\begin{align}\label{eq:lm43_est4}
g_0(\sb^j_{t_0}) - g_0(\xb^j_{t_0}) \leq  \frac{(\delta^j_0)^2}{2} - \nabla{f}(\xb^j_{t_0})^T(\sb^j_{t_0} - \xb^j_{t_0}) - \frac{1}{2}\left(\Vert\bar{\sb}^j_{t_0} - \xb^j_{t_0}\Vert_{\xb^j_{t_0}}^2 + \zeta_j^2\right).
\end{align}
By using the triangle inequality and \eqref{eq:inexact_sol2} we deduce 
\begin{equation*}
\Vert\bar{\sb}^j_{t_0} - \xb^j_{t_0}\Vert_{\xb^j_{t_0}} \geq \Vert\sb^j_{t_0} - \xb^j_{t_0}\Vert_{\xb^j_{t_0}} - \Vert\sb^j_{t_0} - \bar{\sb}^j_{t_0}\Vert_{\xb^j_{t_0}}  \geq \zeta_j - \delta^j_0.
\end{equation*}
Hence, with $\delta^j_0 \leq \zeta_j$, this inequality implies
\begin{equation}\label{eq:lm43_est5}
\Vert\bar{\sb}^j_{t_0} - \xb^j_{t_0}\Vert_{\xb^j_{t_0}}^2 \geq \zeta_j^2 + (\delta^j_0)^2 - 2\zeta_j\delta^j_0.
\end{equation}
Combining \eqref{eq:lm43_est1}, \eqref{eq:lm43_est4} and \eqref{eq:lm43_est5}, we finally get
\begin{align}\label{eq:lm43_est6}
F(\xb^{j+1}_{t_0}; t_0) - F(\xb^j_{t_0}; t_0) \leq \omega_{*}(\alpha_j\zeta_j) - \zeta_j(\zeta_j - \delta_0^j)\alpha_j,
\end{align}
provided that $\alpha_j\zeta_j < 1$ and $\delta^j_0 \leq \zeta_j$.

Now we consider the function $\varphi(\alpha) := \zeta_j(\zeta_j - \delta_0^j)\alpha - \omega_{*}(\zeta_j\alpha)$. This function is concave, it attains the maximum at $\alpha_j := \frac{\zeta_j - \delta^j_0}{\zeta_j(1 + \zeta_j - \delta^j_0)}$ provided that $\delta^j_0 \leq \zeta_j$. In this case, we also have $\alpha_j\zeta_j = \frac{\zeta_j - \delta^j_0}{1 + \zeta_j - \delta^j_0} < 1$ and $\varphi(\alpha_j) = \omega(\zeta_j - \delta^j_0)$. Substituting this value into \eqref{eq:lm43_est6}, we obtain \eqref{eq:damped_PN_scheme}. %and then subtracting the result to $F(\xb^{*}_{t_0}; t_0)$ 
%\end{proof}
\Eproof
% End of the proof.

%%% Proof of Lemma 2.1.
\subsection{The proof of Lemma \ref{le:barrier_solution}}
%For simplicity of notation, we drop the subindex $t_k$ in $\xb^k_{t_k}$ as $\xb^k := \xb^k_{t_k}$.
Since $f$ is the barrier function of $\Omega$ and $\xb^{*}_t$ is the solution of \eqref{eq:cvx_prob}, it is obvious that
$\xb^{*}_t\in\mathrm{int}(\Omega)$ and $g(\xb^{*}) \leq g(\xb^{*}_t)$. 
We first prove \eqref{eq:approx_sol_P_and_CP}.
From \cite[Theorem 4.2.4]{Nesterov2004} we have 
\begin{equation}\label{eq:property_barrier1}
\nabla{f}(\xb)^T(\yb - \xb) < \nu, ~~\forall \xb, \yb \in \dom{f}.
\end{equation}
By using the convexity of $g$, the optimality condition \eqref{eq:optimality_for_ln_barrier} and the property \eqref{eq:property_barrier1} of the barrier function $f$, for any $\xb\in\dom{F} \equiv \dom{f}\cap\dom{g}$, we have
\begin{align}\label{eq:lm51_est1}
g(\xb) - g(\xb^{*}_t) &\geq (\xi^{*}_t)^T(\xb - \xb^{*}_t), ~~\xi^{*}_t \in\partial{g}(\xb^{*}_t)  \nonumber\\
&\overset{\tiny\eqref{eq:optimality_for_ln_barrier}}{\geq} - t\nabla{f}(\xb^{*}_t)^T(\xb - \xb^{*}_t)\\
&\overset{\tiny\eqref{eq:property_barrier1}}{\geq} -t\nu. \nonumber
\end{align}
By substituting $\xb = \xb^{*}$ in \eqref{eq:lm51_est1} we obtain \eqref{eq:approx_sol_P_and_CP}.
Since $\xb_{t_k} \in \Omega$ and $\xb^{*}_t$ is the optimal solution of 
Similarly, by letting $t = t_k$ and $\xb = \xb_{t_k}$ in \eqref{eq:lm51_est1} we obtain the left-hand side of \eqref{eq:rel_Fk_and_Fstar}.

Next, we prove the right-hand side of \eqref{eq:rel_Fk_and_Fstar}. 
By using \eqref{eq:approx_sol} in Definition \ref{de:inexact_sol} we can estimate
\begin{align}\label{eq:lm51_est2}
g(\xb_{t_k}) &\leq g(\bar{\xb}_{t_k}) + t_k\left[Q(\bar{\xb}_{t_k}; \xb_{t_{k-1}}) - Q(\xb_{t_k}; \xb_{t_{k-1}})\right] + t_k\frac{\delta_k^2}{2}\nonumber\\
&\leq g(\bar{\xb}_{t_k}) + t_k\nabla{f}(\xb_{t_{k-1}})^T(\bar{\xb}_{t_k} - \xb_{t_k}) + t_k\frac{\delta_k^2}{2}\nonumber\\
&+ \frac{t_{k}}{2}\left[\norm{\bar{\xb}_{t_k}-\xb_{t_{k-1}}}^2_{\xb_{t_{k-1}}} - \norm{\xb_{t_k} - \xb_{t_{k-1}}}^2_{\xb_{t_{k-1}}}\right],
\end{align}
where $\bar{\xb}_{t_k}$ is the exact solution of \eqref{eq:barrier_cvx_subprob1} at $t = t_k$.
Moreover, from the optimality condition \eqref{eq:opt_cond_subprob}, there exists $\bar{\vb}_{k}\in\partial{g}(\bar{\xb}_{t_k})$ such that 
\begin{equation}\label{eq:lm31_proof1}
\bar{\vb}_{k} = - t_k\nabla{f}(\xb_{t_{k-1}}) - t_k\nabla^2{f}(\xb_{t_{k-1}})(\bar{\xb}_{t_k} - \xb_{t_{k-1}}).
\end{equation}
By using the convexity of $g$ we can estimate $g(\xb^{*}_{t_k}) - g(\xb_{t_k})$ as
\begin{align}\label{eq:lm51_est3}
g(\xb^{*}_{t_k}) - g(\bar{\xb}_{t_k}) &\geq  \bar{\vb}_{k}^T(\xb^{*}_{t_k} -  \bar{\xb}_{t_k})  \nonumber\\
&\overset{\tiny\eqref{eq:lm31_proof1}}{=} -t_k\nabla{f}(\xb_{t_{k-1}})^T(\xb^{*}_{t_k} - \bar{\xb}_{t_k}) \nonumber\\
&- t_k(\bar{\xb}_{t_k} - \xb_{t_{k-1}})^T\nabla^2{f}(\xb_{t_{k-1}})(\xb^{*}_{t_k} - \bar{\xb}_{t_k}).
\end{align}
Now we sum up \eqref{eq:lm51_est2} and \eqref{eq:lm51_est3} and then rearrange the result by using the Cauchy-Schwarz inequality to get
\begin{align}\label{eq:lm51_est4}
g(\xb^{*}_{t_k}) - g(\xb_{t_k}) & \geq -t_k\nabla{f}(\xb_{t_{k-1}})^T(\xb^{*}_{t_k} - \xb_{t_k})  - \frac{t_k}{2}\delta_k^2 \nonumber\\
&-\frac{t_k}{2}\Big[ \Vert \bar{\xb}_{t_k} - \xb_{t_{k-1}}\Vert_{\xb_{t_{k-1}}}^2 - \Vert \xb_{t_k} - \xb_{t_{k-1}}\Vert_{\xb_{t_{k-1}}}^2 \nonumber\\
&+ 2(\bar{\xb}_{t_k} - \xb_{t_{k-1}})^T\nabla^2f(\xb_{t_{k-1}})(\xb^{*}_{t_k} - \bar{\xb}_{t_k})\Big]_{[1]}.
\end{align}
From \cite[Theorem 4.1.6]{Nesterov2004} we have
\begin{equation}\label{eq:lm51_est4c}
(1 - \lambda_{t_k})^2\nabla^2f(\xb^{*}_{t_k}) \preceq \nabla^2{f}(\xb_{t_{k-1}}) \preceq (1 - \lambda_{t_k})^{-2}\nabla^2{f}(\xb^{*}_{t_k}),
\end{equation}
where $\lambda_{t_k} := \Vert\xb_{t_{k-1}} - \xb^{*}_{t_k}\Vert_{\xb^{*}_{t_k}}$ defined as before. 
We can easily show that 
\begin{equation*}
\norm{\nabla{f}(\xb_{t_{k-1}})}_{\xb^{*}_{t_k}}^{*} \leq (1-\lambda_{t_k})^{-1}\norm{\nabla{f}(\xb_{t_{k-1}})}_{\xb_{t_{k-1}}}^{*} \overset{\tiny\eqref{eq:used}}{\leq} (1-\lambda_{t_k})^{-1}\sqrt{\nu}.
\end{equation*} 
Using this inequality together with the Cauchy-Shwarz inequality, we can prove that
\begin{align}\label{eq:lm51_proof3}
\nabla{f}(\xb_{t_{k-1}})^T(\xb^{*}_{t_k} - \xb_{t_k}) &\leq  \norm{\nabla{f}(\xb_{t_{k-1}})}_{\xb^{*}_{t_k}}^{*}\norm{\xb_{t_k} - \xb^{*}_{t_k}}_{\xb^{*}_{t_k}} \nonumber\\
& = \sqrt{\nu}(1- \lambda_{t_k})^{-1}\lambda_{t_k}^+.
\end{align} 
Next, we estimate the last term $[\cdots]_{[1]}$ of \eqref{eq:lm51_est4} as follows
\begin{align}\label{eq:lm51_est5}
[\cdots]_{[1]} & := \Vert \bar{\xb}_{t_k}  -  \xb_{t_{k-1}}\Vert_{\xb_{t_{k-1}}}^2  -  \Vert \xb_{t_k}  -  \xb_{t_{k-1}}\Vert_{\xb_{t_{k-1}}}^2  \nonumber \\ &+  2(\bar{\xb}_{t_k}  -  \xb_{t_{k-1}})^T\nabla^2f(\xb_{t_{k-1}})(\xb^{*}_{t_k}  -  \bar{\xb}_{t_k}) \nonumber\\
& = - \norm{\xb_{t_k} - \xb_{t_{k-1}}}_{\xb_{t_{k-1}}}^2  -  \norm{\bar{\xb}_{t_k} - \xb_{t_{k-1}}}_{\xb_{t_{k-1}}}^2 \nonumber \\ &+ 2(\bar{\xb}_{t_k} - \xb_{t_{k-1}})^T\nabla^2f(\xb_{t_{k-1}})(\xb^{*}_{t_k} - \xb_{t_{k-1}}) \nonumber\\
&\leq - \frac{1}{2}\norm{\bar{\xb}_{t_k} - \xb_{t_k}}_{\xb_{t_{k-1}}}^2 + 2(\bar{\xb}_{t_k} - \xb_{t_{k-1}})^T\nabla^2f(\xb_{t_{k-1}})(\xb^{*}_{t_k} - \xb_{t_{k-1}})\nonumber\\
&\overset{\tiny\eqref{eq:lm51_est4c}}{\leq}  2(1-\lambda_{t_k})^{-2}\left(\norm{\bar{\xb}_{t_k} - \xb_{t_k}}_{\xb_{t_{k-1}}} + \lambda_{t_k}^+ + \lambda_{t_k}\right)\lambda_{t_k} \nonumber\\
&\leq  2(1-\lambda_{t_k})^{-2}\left(\delta_k + \lambda_{t_k}^+ + \lambda_{t_k}\right)\lambda_{t_k}.
\end{align}
Here, the two last inequalities are obtained by using the triangle inequality, the definition of $\lambda_{t_k}^+$, $\lambda_{t_k}$ and \eqref{eq:computable_criterion2}.
Now, we combine \eqref{eq:lm51_est3}, \eqref{eq:lm51_est4} and \eqref{eq:lm51_est5} to derive
\begin{align*}
g(\xb^{*}_{t_k}) - g(\xb_{t_k}) &\geq -t_k\left[\sqrt{\nu}\frac{\lambda_{t_k}^+}{1 - \lambda_{t_k}} + (1-\lambda_{t_k})^{-2}\lambda_{t_k}\left(\lambda_{t_k}^+ + \lambda_{t_k} + \delta_k\right) + \frac{\delta_k^2}{2}\right],
\end{align*}
which is the right-hand side of \eqref{eq:rel_Fk_and_Fstar} provided that $ \lambda_{t_k} < 1$.
Finally, the estimate \eqref{eq:app_sol} follows directly by summing up \eqref{eq:approx_sol_P_and_CP} and \eqref{eq:rel_Fk_and_Fstar}.
Here, the left-hand side of \eqref{eq:app_sol} follows from the fact that $\xb_{t_k}\in\Omega$, and therefore, $g(\xb_{t_k}) - g(\xb^{*}) \geq 0$.
\Eproof
%% End of the proof.

%+ References.
\bibliographystyle{acm}

\end{document}